\documentclass{amsart}
\usepackage{ adjustbox }
\usepackage{ amsthm }
\usepackage{ amssymb }
\usepackage{ amsmath }
\usepackage{ array }
\usepackage{ booktabs }
\usepackage{ enumitem }
\usepackage{ latexsym }
\usepackage{ url }
\usepackage{letltxmacro} 
\LetLtxMacro{\oldsqrt}{\sqrt}  
\renewcommand{\sqrt}[2][\mkern8mu]{\mkern-4mu\mathop{\oldsqrt[#1]{#2}}}
\usepackage[all]{ xy }
\usepackage{ tikz-cd }
\usetikzlibrary{ trees}
\usetikzlibrary{ arrows, calc, positioning }
\usetikzlibrary{ decorations.pathmorphing }
\usetikzlibrary{ decorations.markings }

\theoremstyle{definition}

\newtheorem{definition}{Definition}[section]

\newtheorem{example}[definition]{Example}

\newtheorem{remark}[definition]{Remark}

\theoremstyle{plain}

\newtheorem{corollary}[definition]{Corollary}
\newtheorem{lemma}[definition]{Lemma}

\newtheorem{theorem}[definition]{Theorem}

\makeatletter
\def\l@section{\@tocline{1}{0pt}{1pc}{}{}}
\def\l@subsection{\@tocline{2}{0pt}{1pc}{4.6em}{}}
\def\l@subsubsection{\@tocline{3}{0pt}{1pc}{7.6em}{}}
\renewcommand{\tocsection}[3]{%
  \indentlabel{\@ifnotempty{#2}{\makebox[1.25em][l]{\ignorespaces#1#2.}}}#3}
\renewcommand{\tocsubsection}[3]{%
  \indentlabel{\@ifnotempty{#2}{\hspace*{1.25em}\makebox[2.00em][l]{\ignorespaces#1#2.}}}#3}
\renewcommand{\tocsubsubsection}[3]{%
  \indentlabel{\@ifnotempty{#2}{\hspace*{3.25em}\makebox[2.75em][l]{\ignorespaces#1#2.}}}#3}
\makeatother

\allowdisplaybreaks


\begin{document}

\title{Classification of  finite W-groups}

\author{Fatemeh Bagherzadeh}

\address{Department of Mathematics and Statistics, University of Saskatchewan, Canada}

\email{bagherzadeh@math.usask.ca}

\subjclass[2010]{Primary 12D15. Secondary  12F10, 11E81, 14G32, 20E06, 11E04, 14G32, 11S31, 11S20}

\keywords{W-group, absolute Galois group, Pythagorean formally real field, Witt ring, space of ordering of field $F$, preordering.}

\thanks{
The results of this paper form part of the author's Ph.D. thesis, completed in 2014
under the supervision of Prof. J\'a{n}  Min\'a\v{c} of the Department of Mathematics at Western University, Canada.
This paper was written while the author was a Postdoctoral Fellow in the Department of 
Mathematics and Statistics at the University of Saskatchewan, Canada, partially supported by 
the NSERC Discovery Grant of Prof. Murray Bremner.}

\begin{abstract}
We determine the structure of  the W-group $\mathcal{G}_F$, the small Galois quotient of the absolute Galois group $G_F$ of  the Pythagorean formally real field  $F$ when the space of orderings $X_F$ has finite order. 
Based on  Marshall's work (1979), we reduce the 
structure of $\mathcal{G}_F$ to that of $\mathcal{G}_{\overline{F}}$, the W-group of the 
residue field $\overline{F}$ when $X_F$ is a connected space.
In the disconnected case, the structure of $\mathcal{G}_F$ is the free product of  the
W-groups $\mathcal{G}_{F_i}$ corresponding to the connected components 
$X_i$ of $X_F$. 
We also give a completely Galois theoretic proof for  Marshall's Basic Lemma.
\end{abstract}

\maketitle


%
\section{Introduction} 
The characterization of Witt rings in the category of all rings is a very difficult problem. 
Currently  we  only have information  about finitely generated Witt rings. 
For each field $F$ of characteristic not two, there exists  a certain Galois group which carries the same information as the Witt ring of $F$; see \cite{W1979}, \cite[Theorem 3.8]{MS1996}.
This group was denoted by $\mathcal{G}_F$ and called  the W-group of $F$; see \cite{MS1990}.
Our approach for determining the structure of W-groups,  which are small  quotients of the absolute Galois groups $G_F$, is based on  Marshall's theory of spaces of orderings \cite{M1979}, but from the Galois theoretic point of  view.\\

We focus on W-groups of some Pythagorean fields for which  $-1{\notin}\sum F^2$ and, especially, on   their spaces of orderings.
A \textit{Pythagorean} field is a field in which every sum of two squares is a square. 
The two obvious  examples of Pythagorean fields are $\mathbb{R}$ and $\mathbb{C}$.
Artin  and  Schreier called any field with the property $-1{\notin}\sum F^2$, \textit{formally real} and  they characterized them as fields admitting orderings; see \cite[Theorem 1.5]{L1983}.
The theory of formally real fields originally comes from  a basic algebraic property of the field of real numbers which is that the only relations  of the form $\sum \alpha_i^2=0$ are the trivial ones $ 0^2+0^2+,\dots ,+0^2=0$.\\

 Throughout this paper  $F$ is a field with $\hbox{char}F=0$.
The group of nonzero elements of the  field $F$  is denoted by $\dot{F}$ and $\dot{F}/\dot{F^2}$ is the  square class group, which is an  $\mathbb{F}_2$-vector space.

 \begin{definition}\cite[\S.1]{L1983}
 An \textit{ordering} of the field $F$ is a subset $P \subsetneqq F$ such that $P+P \subseteq P$,  $P \cdot P \subseteq P$ and $P \cup (-P)=F$.
 A \textit{preordering} of  the field $F$ is a proper subset $T\subsetneqq F$ such that $\dot{F}^2 \subseteq T$, $T+T \subseteq T$ and $ T\cdot T \subseteq T$. 
 \end{definition}
 
 Let $\dot{P}=P{-}\{0\}$, then $\dot{P}$ is a subgroup of $\dot{F}$, $\sum \dot{F}^2 \subset P$ and $-1{\notin} P$. 
If $F$ is a  Pythagorean formally real field, $F^2$  is a preordering of $F$. 
~We keep the notation of \cite{MS1990, MS1996}, so  $F(2)$ is the  \textit{quadratic closure} of $F$ and $G_F:=Gal \big(F(2)/F \big)$. 
Now for a field $F$ let $F^{(2)}=F(\sqrt{a}: a \in \dot{F})$,  the compositum of all quadratic extensions of $F$. 
So $F^{(2)}$ is the smallest field  that contains all square roots 
   $\sqrt{a}$ for all $a\in \dot{F}$. Let $F^{(3)}=F^{(2)}(\sqrt{y}: y\in F^{(2)})$ such that $ F^{(2)}(\sqrt{y})/F$ is Galois. 
So $F^{(3)}$ is  the compositum of all quadratic extensions $K$ of $F^{(2)}$ that $ K/F$ is Galois.

\begin{definition}\cite[\S2]{MS1996}
   Let $F$ be a Pythagorean formally real field, the \textit{W-group} of  $F$ is defined by $\mathcal{G}_F=Gal \big(F^{(3)}/F \big)$.
\end{definition}
\subsection{Overview of Problem, Method and Results}
Our goal is  to determine the structure of the W-group of the field $F$ when $F$ is a Pythagorean formally real  field with finitely many orderings. 
One of the main theorem we use relates  $X_F$, the  set of orderings  of the field $F$, to the set of non simple involutions  $ \{\sigma \in \mathcal{G}_F\, | \, \sigma^2=1,\, \sigma\notin \Phi(F)\}$; see \cite[corollary 2.10]{MS1990}. 
 By using Galois theory we  show $X_F$ is a  space of orderings (Theorem \ref{xFspace of oredring})  and   we also present  the Galois theoretic proof for the Basic Lemma; see \cite[Lemma 1.3]{M1979} which make this mysterious lemma  more clear. 
Thereby  all techniques about the general  space of orderings can be used for  classifying the W-group $\mathcal{G}_F$. 
By using Marshall's  method for  classifying  the general space of orderings (see \cite{M1979}) we  show if  the space of orderings of the field $F$ is connected, the translation group of $\mathcal{G}_F$ is nontrivial, so the structure of $\mathcal{G}_F$ will be reduced  to the structure of $\mathcal{G}_{\overline{F}}$,  (Theorem \ref{Connected case}. $\overline{F}$ is the residue  field of $F$). 
If $X_F$ is disconnected, the structure of $\mathcal{G}_F$ will be reduced to the free product of $\mathcal{G}_{F_i}$ such that $\mathcal{G}_{F_i}$ are W-groups corresponding to the connected components $X_i$ of $X_F$ (Theorem \ref{disconnectedcase}).
\section{Preliminaries}
\subsection{Space  of Orderings} 
\begin{definition}\label{orderingspace}\cite{M1979}
A pair $(X,G)$ such that $G$ is an elementary 2-group, ${-}1$ is a distinguished  element in  $G$ and  $X$ is a subset of the character group $\chi(G)=Hom(G,\{1,{-}1\})$ is called a \textit{space of orderings} if it satisfies the following properties:
\begin{enumerate}
\item $ X$ is a closed subset of $ \chi(G)$.
\item For any ordering $\sigma \in X,\; \sigma(-1)={-1}$.
\item $X^\bot=\{a \in G\,|\,\sigma a=1\, \forall  \,\sigma \in X\}=1$.
\item If  $f$ and $g$ are two forms over $G$ and $ x\in D_{f\oplus g}$, then there exist $y\in D_f$ and $z \in D_g$ such that $x \in D_{\langle y,z\rangle}$.
\end{enumerate}
\end{definition}

\begin{example}
Let $F$ be a formally real field, $X_F$ the set of all orderings of $F$ and $G=\dot{F}/\sum \dot{F^2}$. 
We  show $(X_F,G_F)$ is a  space of orderings, (Theorem \ref{xFspace of oredring}).
\end{example}

 Let  $\sigma_i \in X$ for $i=1,\dots, m$, $\epsilon_i \in \{ 0,1 \}$ and 
\[
Y=\{ \sigma  \in X \, |  \,\sigma  = {\sigma_1}^{\epsilon_1}, \dots, {\sigma_m}^{\epsilon_m}\},\qquad
\Delta =\{a \in G\, | \, \sigma_i(a)=1 \}.
\]
Then  $(Y,G/ \Delta)$ is a \textit{subspace}  of $(X,G)$ generated by $\sigma_1,\dots,\sigma_m $ if $Y$ and $\Delta$   satisfy the duality condition; $\Delta = Y^\bot$ and  $Y= {\Delta^\bot} \cap X$; see \cite{M1979}. 
Note that $\sigma$ is  a product of an odd number of orderings. 
The subspace  $(Y,G/ \Delta)$  of $(X,G)$ is also a space of orderings; see \cite[Lemma 2.2]{M1979}.

\begin{remark}
 Two spaces of orderings $(X, G)$ and $(X', G')$ are  called equivalent, denoted by $(X, G) \backsim (X, G')$, if there exists a group isomorphism $\varphi: G \cong G'$ such that the dual isomorphism $\varphi^* : \chi(G')\rightarrow \chi(G)$ maps $X'$ onto $X$; see  \cite{M1979}. 
\end{remark}

\begin{definition}\cite{M1979}\label{connectedorderings}
 Two orderings $\sigma, \sigma'$ of  $X$ are called \textit{simply connected} in $X$, denoted by $\sigma\backsim_s\sigma'$, if there exist orderings $\tau,\tau'$ of $X$ such that  $\sigma\sigma'=\tau\tau'$ and  $\{ \sigma,\sigma' \} \neq \{ \tau,\tau' \}$.
~If either $\sigma=\sigma'$ or there exists a sequence of orderings 
$\sigma=\sigma_0,\,\sigma_1,\dots,\sigma_k =\sigma' \in X$ such that  $\sigma_{i-1}\backsim_s\sigma_i$ for all $ i=1,\dots,k$, then $\sigma$ and $\sigma'$ are  \textit{connected} and  denoted  by $\sigma\backsim\sigma'$. 
\end{definition}

Let $X_i$,   $i=1,\dots,k$ be the equivalence classes of connected components  and $X=X_1\cup, \dots ,\cup X_k$ the decomposition of $X$  determined by the relation $\sim$.
Any of $X_i$ are called the connected components of $X$.
The  ordering space $X$ is connected if it has only one connected component and  the rank (or dimension) of $X$ is $n$ if there exists a basis  $\{\sigma_1,\dots,\sigma_n \}$ of $\chi(G)$ generating the  elements of $X$,  any element of $X$ should be a product of an odd number of $\sigma_i$.

\begin{theorem}\label{DecompositionTheorem} \cite{M1979}
Suppose $X_1,\dots,X_k$ are  the connected components of $X$, then any of  $X_i$ is  a subspace of $X$ and $\hbox{rank}\,X=\sum^k_1 \hbox{rank}\,X_i.$
\end{theorem}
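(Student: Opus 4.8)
The plan is to reduce both assertions to a single \emph{localization-of-relations} lemma: every nontrivial $\mathbb{F}_2$-linear relation among orderings of $X$ is supported on a single connected component. Throughout I work in the $\mathbb{F}_2$-vector space $V=\chi(G)$, writing products of orderings additively, and I record the parity remark that, since each $\sigma\in X$ satisfies $\sigma(-1)={-}1$ by condition (2) of Definition~\ref{orderingspace}, all elements of $X$ lie in one parity class; in particular a sum of orderings vanishes only if the number of summands is even. By condition (3) of Definition~\ref{orderingspace} we have $X^\bot=1$, so $X$ spans $V$; extracting a basis of $V$ from $X$ and invoking the parity remark shows that $\mathrm{rank}\,X=\dim_{\mathbb{F}_2}V$, and likewise that any subspace with value group $G/Y^\bot$ has $\mathrm{rank}\,Y=\dim_{\mathbb{F}_2}(Y^\bot)^\bot=\dim_{\mathbb{F}_2}\langle Y\rangle$. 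Hence the identity $\mathrm{rank}\,X=\sum_i\mathrm{rank}\,X_i$ is equivalent to the statement that $V=\bigoplus_i\langle X_i\rangle$ is a direct sum, i.e. that no relation links distinct components.

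First I would dispatch the base case. If $\sigma+\sigma'+\tau+\tau'=0$ is a four-term relation among distinct orderings, then $\sigma+\sigma'=\tau+\tau'$ with $\{\sigma,\sigma'\}\neq\{\tau,\tau'\}$, which is precisely the condition $\sigma\backsim_s\sigma'$ of Definition~\ref{connectedorderings}; rearranging the same relation shows that any two of the four orderings are simply connected, so a four-term relation is automatically supported on one component.

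The crux, and the step I expect to be the main obstacle, is promoting this to relations of arbitrary even length. Here I would induct on the length of a minimal relation $\sigma_1+\cdots+\sigma_{2r}=0$, using condition (4) of Definition~\ref{orderingspace}. Translating the relation into a statement about value sets $D_f$ of the associated forms, the linkage property (if $x\in D_{f\oplus g}$ then $x\in D_{\langle y,z\rangle}$ for suitable $y\in D_f$, $z\in D_g$) lets me split the form into two shorter pieces sharing a represented element, producing both a strictly shorter relation and an overlapping four-term relation. The four-term piece is connected by the base case and the shorter piece by induction, so $\sigma_1$ is chained to the remaining orderings and all $\sigma_j$ land in one component. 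The genuine difficulty is organizing this reduction so that the auxiliary orderings supplied by condition (4) actually lie in $X$ and strictly shorten the relation; this is exactly where Marshall's representation machinery does the work, and I would need the precise dictionary between odd products of orderings and value sets to make the induction bookkeeping airtight.

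With the lemma available, both conclusions are short. For the rank formula, a basis of each $\langle X_i\rangle$ chosen from $X_i$ assembles into a linearly independent family in $V$, since any dependence among these vectors would be a relation meeting two distinct components, contradicting the lemma; therefore $V=\bigoplus_i\langle X_i\rangle$ and the dimensions add, giving $\mathrm{rank}\,X=\sum_i\mathrm{rank}\,X_i$. For the subspace claim I verify the duality characterization recalled before Definition~\ref{connectedorderings} with generating set $X_i$: it suffices to prove $X_i=X\cap(X_i^\bot)^\bot=X\cap\langle X_i\rangle$. The inclusion $\subseteq$ is immediate, while an ordering $\sigma\in X\cap\langle X_i\rangle$ is an odd product of elements of $X_i$ by the parity remark, so the localization lemma applied to the even-length relation $\sigma+\sigma_{j_1}+\cdots+\sigma_{j_{2r+1}}=0$ forces $\sigma$ into the component $X_i$. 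Since $X_i$ then inherits conditions (1)--(4) by \cite[Lemma~2.2]{M1979}, it is a subspace of $X$, completing the argument.
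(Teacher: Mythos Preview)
The paper itself contains no proof of this statement; it is quoted from Marshall \cite{M1979} and used as input, so there is nothing in the present paper to compare your proposal against.

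Judged on its own, your architecture is correct and is essentially Marshall's: reduce both assertions to the lemma that every $\mathbb{F}_2$-relation among elements of $X$ lies within a single connected component, then read off the direct-sum decomposition $\chi(G)=\bigoplus_i\langle X_i\rangle$ and the duality condition $X_i=X\cap\langle X_i\rangle$. Your parity remark, your four-term base case, and your two closing deductions are all correct. The gap is precisely where you locate it. Condition~(4) of Definition~\ref{orderingspace} is a statement about value sets $D_f$ of forms over $G$, not directly about characters, and you have not written down the dictionary that converts a minimal relation $\sigma_1\cdots\sigma_{2r}=1$ in $\chi(G)$ into a representability statement to which~(4) applies, nor shown why the auxiliary elements produced by~(4) are genuine orderings in $X$ that yield a \emph{strictly shorter} relation overlapping the original one. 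In Marshall's argument this passage runs through the equivalence, derived from the axioms, between $b\in D_{\langle a_1,\dots,a_n\rangle}$ and a sign condition over $X$, together with the subspace machinery of \cite[\S2]{M1979}; until that translation is made explicit your inductive step remains a plausible plan rather than a proof.
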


\begin{remark}\label{familyofxalpha}
For $\alpha \in \chi(G)$ let $X_\alpha = \{\sigma\in X\, | \, \sigma\alpha \in X\}=\alpha X\cap X$. Indeed $X_\alpha$ is a subspace of $X$  and  the maximal subset of $X$  satisfying $\alpha X_\alpha =X_\alpha$; see  \cite[Lemma 4.3]{M1979}. 
Consider the family  $\mathcal{M}=\{X_\alpha\,| \, \alpha \in \chi(G)\}$. 
We recall  briefly the   following propositions about the elements of $\mathcal{M}$; see \cite[ Lemma  4.4]{M1979}: 
\begin{enumerate}

\item If $\sigma_1,\sigma_1\alpha,\sigma_1\beta,\sigma_1\alpha\beta$ are different elements in $ X_F$, then either $X_\alpha\subseteq X_\beta$ or $X_\beta\subseteq X_\alpha$.

\item If $\alpha\neq 1$, $\beta\neq 1$, $ X_\alpha\cap X_\beta\neq\emptyset$, rank $ X_\alpha\ge3$ and  rank $X_\beta \ge3$, then there exists $\gamma\in \chi(G)$, $\gamma\neq1$ such that $X_\alpha$, $X_\beta\subseteq X_\gamma$.
\item
If rank $X_\alpha \geqslant 3$ and rank $ X_\beta\geqslant 3$, then either $X_\alpha\cap X_\beta=\emptyset$ or  $|X_\alpha\cap X_\beta|\geqslant 2$. 
\end{enumerate}
\end{remark}

\begin{theorem}\cite{M1979}
 If $X$ is a connected space with rank $X\neq1$, then there exists $\alpha \in \chi(G)$, $\alpha\neq1$, such that $\alpha X=X$.
\end{theorem}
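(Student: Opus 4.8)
The plan is to translate the statement into the language of the family $\mathcal{M}=\{X_\alpha\}$ and then run a maximality argument driven by connectedness. First I would record the elementary reduction: since $X$ is finite and $X_\alpha=\alpha X\cap X$, the equality $\alpha X=X$ is equivalent to $X_\alpha=X$. Indeed $X_\alpha=X$ gives $X\subseteq \alpha X$, and as $|\alpha X|=|X|<\infty$ this forces $\alpha X=X$. Thus it suffices to produce some $\alpha\neq1$ with $X_\alpha=X$.

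Next I would extract subspaces of rank $\geq 3$ from connectedness, so that the combination lemma in Remark \ref{familyofxalpha}(2) becomes applicable. Since $\mathrm{rank}\,X\neq1$, the space $X$ is not a single ordering, so $|X|\geq2$; connectedness then yields at least one genuine simple connection $\sigma\backsim_s\sigma'$ with $\sigma\neq\sigma'$. Writing $\sigma\sigma'=\tau\tau'$ with $\{\sigma,\sigma'\}\neq\{\tau,\tau'\}$ and setting $\beta=\sigma\tau$, a short computation gives $\tau=\sigma\beta$, $\tau'=\sigma'\beta$, and $\beta\neq1$; hence the four \emph{distinct} orderings $\sigma,\sigma',\sigma\beta,\sigma'\beta$ all lie in $X_\beta$. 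Because $\sigma(-1)=\sigma'(-1)=-1$ while $(\sigma\sigma')(-1)=\beta(-1)=1$, these four characters span a $3$-dimensional subgroup of $\chi(G)$, so they generate a subspace of rank $3$ (a fan) contained in $X_\beta$; in particular $\mathrm{rank}\,X_\beta\geq3$. Thus the set $\mathcal{S}=\{X_\alpha\,|\,\alpha\neq1,\ \mathrm{rank}\,X_\alpha\geq3\}$ is nonempty.

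Finally I would choose $M=X_\gamma\in\mathcal{S}$ of maximal rank (possible since all ranks are bounded by the finite $\mathrm{rank}\,X$) and argue $M=X$, which finishes the proof with $\alpha=\gamma\neq1$. Suppose instead $M\subsetneq X$. Viewing simple connections as the edges of a graph on $X$, connectedness forces an edge crossing the partition $\{M,\,X\setminus M\}$: there is a simple connection $\sigma\backsim_s\sigma'$ with $\sigma\in M$ and $\sigma'\in X\setminus M$. By the previous paragraph this produces $X_\beta$ with $\beta\neq1$, $\mathrm{rank}\,X_\beta\geq3$, and $\sigma,\sigma'\in X_\beta$. Then $\sigma\in M\cap X_\beta\neq\emptyset$, and both subspaces have rank $\geq3$, so Remark \ref{familyofxalpha}(2) supplies $\delta\neq1$ with $M\cup X_\beta\subseteq X_\delta$. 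But $\sigma'\in X_\delta\setminus M$, so $M\subsetneq X_\delta$; via the defining duality $\Delta_M=M^\bot\supsetneq X_\delta^\bot=\Delta_\delta$, this proper inclusion of subspaces forces $\mathrm{rank}\,X_\delta>\mathrm{rank}\,M$, with $X_\delta\in\mathcal{S}$, contradicting the maximality of $M$. Hence $M=X$.

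I expect the main obstacle to be the rank bookkeeping: verifying that a single simple connection really does generate a rank-$3$ fan (so that parts (2)--(3) of Remark \ref{familyofxalpha}, which require rank $\geq3$, can be invoked), and that a proper inclusion of subspaces is strictly rank-increasing. Both hinge on the duality $\Delta=Y^\bot$, $Y=\Delta^\bot\cap X$ defining subspaces; once these are in hand, the connectivity-plus-maximality skeleton above is straightforward.
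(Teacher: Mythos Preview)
The paper does not give its own proof of this statement; it is simply quoted from Marshall \cite[Theorem~4.7]{M1979}. Your proposal is correct and is in fact a faithful reconstruction of Marshall's original argument: the properties you invoke from Remark~\ref{familyofxalpha} are precisely his Lemmas~4.4 and~4.6, and the ``choose $X_\gamma\in\mathcal{S}$ of maximal rank, then enlarge via a boundary-crossing simple connection'' skeleton is exactly how his proof runs.

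The two points you flag as possible obstacles are indeed routine. For the first, your computation that $\sigma,\sigma',\beta$ are $\mathbb{F}_2$-independent in $\chi(G)$ (using $\beta\neq 1,\sigma,\sigma',\sigma\sigma'$) is correct, and then $X_\beta^\perp\subseteq\ker\sigma\cap\ker\sigma'\cap\ker\tau$ has codimension $\geq 3$, giving $\mathrm{rank}\,X_\beta\geq 3$. For the second, if $M\subsetneq X_\delta$ are both subspaces then $M^\perp\supseteq X_\delta^\perp$; equality would force $M=(M^\perp)^\perp\cap X=(X_\delta^\perp)^\perp\cap X=X_\delta$ by the defining duality, so the containment of annihilators is strict and the rank strictly increases. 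One minor simplification: your opening reduction $X_\alpha=X\Rightarrow\alpha X=X$ does not actually need finiteness of $X$, since $\alpha^2=1$ already gives $\alpha X\subseteq X\Rightarrow X=\alpha^2 X\subseteq\alpha X$.
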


\begin{definition}\label{translationgroup}
For  spaces  of orderings $X$, we  define the \textit{translation group}  $T$ as the set of all $\alpha \in \chi(G)$  such that $\alpha X=X$.
\end{definition}

\begin{theorem}\cite[Theorem 4.8]{M1979}\label{spaceX'}
If $X$ is a connected space of orderings, $T$ the translation group of $X$, $G'=T^\bot$ and  $X'$ the set of all restrictions  $\sigma|_ {G'}$ such that $\sigma\in X$, then $(X',G')$ is a space of orderings.
\end{theorem}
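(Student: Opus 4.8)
The plan is to verify the four defining properties of a space of orderings from Definition \ref{orderingspace} for the pair $(X', G')$, reducing each to the corresponding property of $(X,G)$ together with the defining feature $\tau X = X$ of translations. I would first set up the structural backbone. I record that $-1 \in G'$: for $\tau \in T$ and any $\sigma \in X$, both $\sigma(-1) = -1$ and $(\tau\sigma)(-1) = -1$ hold by property (2) for $X$ (since $\tau\sigma \in \tau X = X$), whence $\tau(-1) = 1$; as this holds for every $\tau \in T$ we get $-1 \in T^\bot = G'$. Next I identify the restriction map $r \colon \chi(G) \to \chi(G')$, $\alpha \mapsto \alpha|_{G'}$: it is surjective with kernel $(G')^\bot = (T^\bot)^\bot = T$ (the last equality because $T$ is closed, which is automatic in the finite setting that concerns us), so $\chi(G') \cong \chi(G)/T$. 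Since $\tau X = X$ for every $\tau \in T$, each coset $\sigma T$ lies in $X$, and the fibre of $r|_X \colon X \to X'$ over $\sigma|_{G'}$ is exactly the full coset $\sigma T$. This coset fibration is what I will lean on throughout.

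With this in hand, properties (1)--(3) for $(X', G')$ are short. For (1), $X' = r(X)$ is the image of the closed --- hence compact --- set $X$ under the continuous map $r$, so $X'$ is closed in $\chi(G')$. For (2), if $\sigma' = \sigma|_{G'}$ then $\sigma'(-1) = \sigma(-1) = -1$ since $-1 \in G'$. For (3), for $a \in G'$ one has $\sigma'(a) = \sigma(a)$, so $(X')^\bot = \{a \in G' : \sigma(a) = 1 \ \forall \sigma \in X\} = G' \cap X^\bot = G' \cap \{1\} = \{1\}$ by property (3) for $X$.

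The substance is property (4), and I would reduce it to two facts about forms over $G'$: (i) the value-set identity $D'_{X'}(f) = G' \cap D_X(f)$, where $D'$ and $D$ denote value sets computed in $X'$ and in $X$; and (ii) a lifting statement: if $f,g$ are forms over $G'$ and $x \in G' \cap D_X(f \oplus g)$, then there exist $y \in G' \cap D_X(f)$ and $z \in G' \cap D_X(g)$ with $x \in D_X\langle y,z\rangle$. Indeed, (i) translates property (4) for $(X',G')$ into precisely the conclusion of (ii). The easy half of (i), namely $D'_{X'}(f) \subseteq G' \cap D_X(f)$, is formal, and the base case $f = \langle a,b\rangle$ is immediate, since for $a,b,c \in G'$ the sign conditions defining membership in $D\langle a,b\rangle$ are constant along the $T$-fibres and hence are the same whether tested over $X$ or over $X'$; the remaining half of (i) is itself the special case $f = \langle a_1\rangle$ of (ii).

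The hard part will be the lifting statement (ii): the representatives $y,z$ produced by property (4) in the ambient space $X$ a priori live in $G \setminus G'$ and must be pushed into $G'$ without destroying the representation $x \in D_X\langle y,z\rangle$. My tool would be the sign characterization of value sets valid in $(X,G)$ --- $c \in D_X\langle b_1,\dots,b_m\rangle$ iff $\sigma(c) \in \{\sigma(b_1),\dots,\sigma(b_m)\}$ for every $\sigma \in X$ --- which, when the $b_i$ lie in $G'$, is constant along each fibre $\sigma T$ and therefore descends to a condition on $X'$. Thus the existence of the $G$-representatives certifies, fibre by fibre, an admissible splitting of signs over $X'$, and the task is to realize that splitting by genuine elements $y,z$ of $G'$. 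This is exactly where the coset fibration $r|_X$ with fibres $\sigma T$, together with property (3) for $X'$ identifying $G'$ as a separating group of sign patterns, is used, and where the standing connectedness of $X$ ensures the translation group $T$ is rich enough to carry out the replacement. Once (ii) is established, (i) follows in all dimensions and property (4) holds, completing the verification that $(X',G')$ is a space of orderings.
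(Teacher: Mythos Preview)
The paper does not give its own proof of this statement; it merely cites Marshall \cite[Theorem~4.8]{M1979}. So there is no argument in the paper to compare against, and your proposal has to stand on its own.

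Your handling of axioms (1)--(3) and the reduction of axiom (4) to the lifting statement (ii) are correct. The genuine gap is that you do not prove (ii): the closing sentences invoke ``connectedness of $X$'' and ``property~(3) for $X'$'' without saying how either is used. In fact connectedness plays no role in this step --- it is present in the hypothesis only to guarantee $T\neq1$, not to push axiom~(4) through --- and property~(3) for $X'$ gives only injectivity of $G'\to\{\pm1\}^{X'}$, which does not by itself produce the replacement elements $y,z\in G'$ you need.

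What is actually missing is short. The key observation is: \emph{if $a,c\in G'$, $b\in G$, and $c\in D_X\langle a,b\rangle$, then either $b\in G'$ or $c=a$.} Indeed, if $b\notin G'=T^\bot$ choose $\tau\in T$ with $\tau(b)=-1$; for any $\sigma\in X$ with $\sigma(c)\neq\sigma(a)$ one has $\sigma(c)=\sigma(b)$, and then $\tau\sigma\in X$ satisfies $(\tau\sigma)(c)=\sigma(c)\neq\sigma(a)=(\tau\sigma)(a)$ and $(\tau\sigma)(b)=-\sigma(b)=-\sigma(c)\neq(\tau\sigma)(c)$, contradicting $c\in D_X\langle a,b\rangle$. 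Hence $\sigma(c)=\sigma(a)$ for all $\sigma\in X$, so $c=a$ by $X^\bot=1$. With this in hand, your (ii) follows by a routine induction: given $x\in G'\cap D_X(f\oplus g)$, axiom~(4) in $X$ produces $y\in D_X(f)$, $z\in D_X(g)$ with $x\in D_X\langle y,z\rangle$; peeling off one entry at a time and applying the observation forces each intermediate witness either to lie in $G'$ already or to collapse to an entry of the form, in which case a trivial replacement works. Add this lemma and your outline becomes a complete proof --- but as written, the crucial step is only asserted, not argued.
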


\begin{theorem} \cite[Theorem 4.10]{M1979}\label{existF}
Let $(X,G)$ be a finite space of orderings. 
Then there exists a Pythagorean formally real field $F$ such that $(X,G){\backsim} (X_F,G_F)$.
\end{theorem}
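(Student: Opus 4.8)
The plan is to argue by induction on $|X|$, using the structural reductions assembled above to strip $(X,G)$ down to the one-point space while matching each reduction with a field construction that preserves realizability. For the base case $|X|=1$, say $X=\{\sigma\}$, condition (3) of Definition \ref{orderingspace} forces $\sigma$ to be injective on $G$, so together with condition (2) we get $G=\{1,-1\}\cong\mathbb{F}_2$; hence the one-point space is unique up to $\backsim$. It is realized by $F=\mathbb{R}$, for which $\dot F/\sum\dot F^2=\{1,-1\}$ and $X_{\mathbb{R}}$ is a single ordering, and the equivalence $(X,G)\backsim(X_{\mathbb{R}},G_{\mathbb{R}})$ is immediate.

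For the inductive step I would split into the two cases the preceding theory isolates. If $X$ is disconnected, I would write $X=X_1\cup\dots\cup X_k$ with $k\ge 2$ for its decomposition into connected components. By Theorem \ref{DecompositionTheorem} each $X_i$ is a subspace, hence a finite space of orderings in its own right, and $\operatorname{rank}X=\sum_i\operatorname{rank}X_i$; therefore $\operatorname{rank}X_i<\operatorname{rank}X$ and $|X_i|<|X|$, so the induction hypothesis furnishes Pythagorean formally real fields $F_i$ realizing the $(X_i,G_i)$. It then remains to realize the disjoint union of the $X_i$ by a single such field. At the level of reduced Witt rings this disjoint union is the fibre product of the $W(F_i)$ over $\mathbb{Z}/2\mathbb{Z}$, and the task is to produce a formally real Pythagorean field whose square-class group and set of orderings reproduce exactly $(X,G)$.

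If instead $X$ is connected with $\operatorname{rank}X\ge 2$, then by the theorem immediately preceding Definition \ref{translationgroup} the translation group $T$ is nontrivial. By Theorem \ref{spaceX'} the pair $(X',G')$ with $G'=T^\bot$ is again a space of orderings, and since $T\ne 1$ one has $\operatorname{rank}X'=\operatorname{rank}X-\dim_{\mathbb{F}_2}T<\operatorname{rank}X$; by induction $X'$ is realized by a Pythagorean formally real field $F'$. The point is that $X$ is reconstructed from $X'$ as the group extension by $T$, and this operation is realized field-theoretically by iterated formal Laurent series: writing $|T|=2^d$, I would take $F=F'((t_1))\cdots((t_d))$, which remains Pythagorean and formally real. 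Each adjunction of a variable $t_j$ doubles the square-class group by the new class of $t_j$ and splits every ordering into the two extensions fixed by the sign of $t_j$---precisely an extension by $\mathbb{Z}/2$---so that a Springer-type analysis of the orderings of $F$ yields $X_F\backsim X$ after $d$ steps.

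The hard part will be the field realization of these two operations rather than the bookkeeping of the induction. The power-series step is the milder of the two: it reduces to the well-understood behaviour of orderings and square classes under the passage $F\mapsto F((t))$. The genuinely delicate point is the direct-sum (fibre-product) construction, where one must exhibit an honest field---simultaneously formally real and Pythagorean---whose reduced Witt ring is the prescribed fibre product, and then verify that its space of orderings is equivalent to $(X,G)$ in the sense defined above. Granting both realizations, the induction on $|X|$ closes and every finite space of orderings arises as $(X_F,G_F)$ for a Pythagorean formally real field $F$.
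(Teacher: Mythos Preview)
The paper does not prove this theorem at all: it is quoted verbatim from \cite[Theorem~4.10]{M1979} and used as a black box, so there is no ``paper's own proof'' to compare against. Your inductive outline---base case $\mathbb{R}$, connected reduction via the translation group realized by iterated Laurent series, disconnected reduction via a direct-sum/fibre-product construction---is in fact the shape of Marshall's original argument in \cite{M1979}, so you have correctly reconstructed the intended proof rather than diverged from it.

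A couple of small remarks on the sketch itself. In the disconnected step your implication ``$\operatorname{rank}X_i<\operatorname{rank}X$, therefore $|X_i|<|X|$'' is not how the inequality on $|X_i|$ actually follows; it follows simply because the $X_i$ are disjoint nonempty pieces of $X$ with $k\ge 2$. For the connected step, the Laurent-series field $F'((t_1))\cdots((t_d))$ is indeed Pythagorean and formally real when $F'$ is, and its space of orderings is exactly the $T$-extension of $X_{F'}$; this is the easy half, as you say. You are right that the genuinely nontrivial ingredient is the field realizing a direct sum of spaces of orderings (equivalently, a product of reduced Witt rings); Marshall handles this with an explicit construction, and without supplying that construction your argument remains a proof \emph{plan} rather than a proof. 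Since the present paper only cites the result, there is nothing further to compare.
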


 Marshall  proved that if $X_F$ is a connected space of orderings, then the  translation group $T$ is a nonempty subgroup  of $\chi(G)$; see \cite{M1979}. 
By using  this,  he   reduced  the study of the structure of connected  spaces of orderings to  ordering spaces with  smaller rank.  
In this work we use  Galois  theoretic techniques  to show that  when the set of orderings of  the field $F$ is connected,  the W-group of $F$ has a nontrivial translation group modulo the  Frattini subgroup.
\subsection{W-group of Pythagorean Formally Real Fields}
 A \textit{quadratic form}  with coefficients in $F$ is a homogeneous quadratic polynomial over $F$. 
A \textit{binary form}  $q(x,y) = ax^2 + bxy + cy^2$ where $a,b,c \in F$ is a quadratic form with two variables. 
Any  quadratic form $q$ in $n$ variables over  field $F$ can be written in a diagonal form  $q(x)=a_1 x_1^2 + a_2 x_2^2+ \dots +a_n x_n^2$. 
Such a diagonal form is  denoted by $\langle a_1,\dots,a_n\rangle$; see  \cite[Corollary 2.4]{L2005}. 
The form $f=\langle a_1,\dots ,a_n\rangle$ represents $e\in \dot{F}$ if and only if there exist $ x_1,\dots, x_n $ in $F$ such that $e=a_1 x_1^2 + a_2 x_2^2+ \ldots +a_n x_n^2$. 
The set of all elements of $F$  represented by a form $f$ is denoted $D_f$.
 
 \begin{definition}\cite{MD} The Witt group $W(F)$ is  the abelian group of equivalence classes of non-degenerate symmetric bilinear forms, with   the orthogonal direct sum of forms as the group operation.  The Witt group $W(F)$ with  the tensor product of two bilinear forms as  the ring product  has a commutative ring structure and is called \textit{Witt ring} of $F$.
\end{definition}
 
 The direct product of Witt rings corresponds to the free product of W-groups in the approprative  category; see \cite{MS}. 

\begin{theorem}\cite{MS} 
 Let  $F^{(3)}=F^{(2)}\big(\sqrt{y}: y\in F^{(2)}\big)$ such that $ F^{(2)}(\sqrt{y})/F$ is Galois, $F^{(3)}$ is the compositum  of all extensions  $K$ of $F$ that $Gal(K/F)$ is one of  $\mathbb{Z}/2\mathbb{Z}$, $\mathbb{Z}/4\mathbb{Z}$ or $D_4$.\footnote {This result was recently extended in \cite{EM2011},  although we do not bring the extension in this paper.} 
\end{theorem}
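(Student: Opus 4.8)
The plan is to prove the two inclusions between $F^{(3)}$ and the compositum $L$ of all Galois extensions $K/F$ with $\mathrm{Gal}(K/F)\in\{\mathbb{Z}/2\mathbb{Z},\mathbb{Z}/4\mathbb{Z},D_4\}$, phrasing both at the level of the pro-$2$ group $G_F=\mathrm{Gal}\big(F(2)/F\big)$. Writing $\Phi=\Phi(G_F)=G_F^2[G_F,G_F]$, so that $F^{(2)}=F(2)^{\Phi}$, the field $F^{(3)}$ is the fixed field of $G_F^{(3)}:=\Phi^2[\Phi,G_F]$; equivalently $\mathcal{G}_F=G_F/G_F^{(3)}$ is the maximal quotient in which the Frattini subgroup $\Phi/G_F^{(3)}$ is central and of exponent $2$. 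Correspondingly $L=F(2)^{\mathcal R}$, where $\mathcal R=\bigcap_{Q}\bigcap_{\varphi}\ker\varphi$ with $Q$ ranging over $\{\mathbb{Z}/2\mathbb{Z},\mathbb{Z}/4\mathbb{Z},D_4\}$ and $\varphi$ over all homomorphisms $G_F\to Q$. Thus the theorem is equivalent to the group-theoretic identity $G_F^{(3)}=\mathcal R$.

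For the inclusion $L\subseteq F^{(3)}$, that is $G_F^{(3)}\subseteq\mathcal R$, I would use that each of the three groups $Q$ has $\Phi(Q)$ central with $\Phi(Q)^2=1$: indeed $\Phi(\mathbb{Z}/2\mathbb{Z})=1$, $\Phi(\mathbb{Z}/4\mathbb{Z})=2\mathbb{Z}/4\mathbb{Z}$, and $\Phi(D_4)=Z(D_4)$ is the unique central subgroup of order $2$. Hence for any $\varphi\colon G_F\to Q$ one gets $\varphi(G_F^{(3)})=\varphi(\Phi)^2[\varphi(\Phi),\varphi(G_F)]\subseteq\Phi(Q)^2[\Phi(Q),Q]=1$, so $G_F^{(3)}\subseteq\ker\varphi$. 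On the field side this is the statement that any such $K$ has its maximal elementary abelian subextension $K\cap F^{(2)}=K^{\Phi(\mathrm{Gal}(K/F))}$ inside $F^{(2)}$ and is obtained from it by at most one quadratic step $K=(K\cap F^{(2)})(\sqrt{y})$ with $y\in F^{(2)}$, whence $KF^{(2)}=F^{(2)}(\sqrt{y})$ is Galois over $F$, so $\sqrt{y}\in F^{(3)}$ and $K\subseteq F^{(3)}$.

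The substantial direction is $F^{(3)}\subseteq L$, i.e.\ $\mathcal R\subseteq G_F^{(3)}$: I must show that every $1\neq\bar w\in\mathcal{G}_F$ is separated from $1$ by a homomorphism onto one of the three groups. If $\bar w\notin\Phi(\mathcal G_F)$, it survives in the elementary abelian quotient $\mathcal G_F/\Phi(\mathcal G_F)=\mathrm{Gal}(F^{(2)}/F)$ and is detected by a $\mathbb{Z}/2\mathbb{Z}$-quotient. The remaining case is $1\neq\bar w\in\Phi(\mathcal G_F)=\mathrm{Gal}(F^{(3)}/F^{(2)})$, a nontrivial central element of exponent $2$. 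Here I would use that $\Phi(\mathcal G_F)$ is spanned over $\mathbb F_2$ by squares $\sigma^2$ and commutators $[\sigma,\rho]$: a square $\sigma^2=\bar w$ produces a $\mathbb{Z}/4\mathbb{Z}$-quotient detecting $\bar w$, while a commutator $[\sigma,\rho]=\bar w$ with $\sigma^2=\rho^2=1$ produces a $D_4$-quotient detecting it. Concretely, I would pass to a finite quotient $G=\mathrm{Gal}(M/F)$ in which $\bar w\neq 1$, encode it by a single $\mathbb F_2$-valued quadratic form $q$ (squaring) with polarization $b$ (commutator) on $V=G/\Phi(G)$, and seek a functional $\mu$ on $\Phi(G)$ with $\mu(\bar w)=1$ for which $q_\mu=\mu\circ q$ is either linear (yielding $\mathbb{Z}/4\mathbb{Z}$) or a product of two linear forms (yielding $D_4$).

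The main obstacle is precisely this last separation, because a priori $q_\mu$ could be anisotropic of rank two --- the configuration corresponding to a $Q_8$-quotient --- and $Q_8$ is deliberately absent from the list. Overcoming it is the crux: one must show that a single nontrivial central element can always be detected by a $\mathbb{Z}/4\mathbb{Z}$ or $D_4$ quotient, never requiring $Q_8$. This is where the hypothesis that $F$ is Pythagorean and formally real is decisive: the abundant supply of orderings in $X_F$ furnishes, for each such $\bar w$, a sign homomorphism (a linear functional on $V$) splitting the relevant form, so that $q_\mu$ factors and the detecting quotient can be taken to be $\mathbb{Z}/4\mathbb{Z}$ or $D_4$. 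In cohomological terms this reflects that the class of $\bar w$ in $H^2(G_F,\mathbb F_2)$ is a sum of quaternion symbols, each individually detected, and the Pythagorean ordering structure lets one isolate one such symbol; assembling the resulting small extensions shows $\sqrt{y}\in L$ for every generator $\sqrt{y}$ of $F^{(3)}$, which completes $F^{(3)}\subseteq L$ and hence the theorem.
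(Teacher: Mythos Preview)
The paper does not prove this statement; it is quoted from \cite{MS} as a preliminary result, so there is no in-paper proof to compare your proposal against. I will therefore just assess the argument.

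Your group-theoretic reformulation and the easy inclusion $G_F^{(3)}\subseteq\mathcal R$ are correct: each of $\mathbb{Z}/2$, $\mathbb{Z}/4$, $D_4$ lies in the category $\mathcal C$ (Frattini subgroup central of exponent $\le 2$), so every homomorphism from $G_F$ to one of them factors through $\mathcal G_F$.

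The hard inclusion, however, has a genuine gap. First, the theorem in \cite{MS,MS1996} holds for \emph{every} field of characteristic $\ne 2$, so orderings cannot be the mechanism that rules out $Q_8$. In fact over a Pythagorean formally real field there are \emph{no} $\mathbb{Z}/4$-extensions at all: $F(\sqrt a)$ embeds in a cyclic quartic extension iff $a$ is a sum of two squares, hence a square when $F$ is Pythagorean. So your line ``a square $\sigma^2=\bar w$ produces a $\mathbb{Z}/4$-quotient detecting $\bar w$'' already fails in exactly the case you invoke. Second, and more fundamentally, writing $\bar w$ as $\sigma^2$ or as $[\sigma,\rho]$ with $\sigma^2=\rho^2=1$ inside $\mathcal G_F$ exhibits a \emph{subgroup} isomorphic to $\mathbb{Z}/4$ or $D_4$, not a \emph{quotient}; you still have to build a homomorphism defined on all of $\mathcal G_F$ that does not kill $\bar w$. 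Your passage to a functional $\mu$ and the claimed factorisation of $q_\mu$ is precisely the step that needs proof, and the sentence about orderings ``splitting the relevant form'' is a hope rather than an argument.

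The proof in the literature works on the field side. One takes $y\in F^{(2)}$ with $F^{(2)}(\sqrt y)/F$ Galois, descends via Kummer theory to a finite multiquadratic $M/F$ with $y\in M$ and $M(\sqrt y)/F$ still Galois, and then uses the explicit description of $\mathbb{Z}/4$- and $D_4^{a,b}$-extensions (cf.\ Definition~\ref{d4extension} and Lemma~\ref{existextension}) together with the splitting behaviour of quaternion algebras over $F$ to place $\sqrt y$ inside a compositum of such extensions. The input that eliminates the need for $Q_8$ is this arithmetic of norms and quaternion symbols --- a constraint satisfied by $G_F$ because it comes from a field, not by an abstract pro-$2$-group --- and not the existence of orderings.
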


\begin{definition}
\label{d4extension}
 A \textit{$D_4$-extension}  $L$ of  a field $F$ is   a Galois extension of $F$ such that $Gal(L/F)\cong D_4$. 
Let  $a,b \in\dot{F}$ be  independent modulo squares ($ a\dot{F}^2\neq b\dot{F}^2$). 
Then a $D_4^{a,b}$-extension of  a field $F$ is a $D_4$-extension  $K$ of $F$ such that $ F \big(\sqrt{a},\sqrt{b}\big)\subseteq K $ and $Gal \big(K/F(\sqrt{ab})\big)\cong \mathbb{Z}/4\mathbb{Z}$.
\end{definition}

\begin{definition}
 Let $a,b \in \dot{F}$. 
A \textit{quaternion algebra}  $A=(\frac{a,b}{F})$ is the $F$-algebra of dimension four with the basis $\{1,i,j,k\}$, such that $i^2=a$, $j^2=b$,  $k=ij$, $ij=-ji$.
\end{definition}

\begin{lemma}\cite[Theorem.III 2.7]{L2005}\label{splitalgebra}
 We say a quaternion algebra $A=(\frac{a,b}{F})$ is split   over $F$ and, denoted  by $A=(\frac{a,b}{F})=1$  if the binary form $\langle a,b\rangle$ represents 1. 
\end{lemma}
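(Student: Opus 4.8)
The plan is to prove the genuine content behind this characterization, namely that $A=(\frac{a,b}{F})$ is \emph{split} (isomorphic to the matrix algebra $M_2(F)$, equivalently not a division algebra) if and only if the binary form $\langle a,b\rangle$ represents $1$. The two ingredients I would rely on are the Wedderburn dichotomy for the four-dimensional central simple algebra $A$ --- it is either a division algebra or isomorphic to $M_2(F)$ --- and the norm form of $A$. For $q=w+xi+yj+zk$ with conjugate $\bar q=w-xi-yj-zk$, a direct computation using $i^2=a$, $j^2=b$, $k^2=-ab$ and $ij=-ji$ gives $N(q)=q\bar q=w^2-ax^2-by^2+ab\,z^2$, the diagonal form $\langle 1,-a,-b,ab\rangle$. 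Since $q$ is invertible exactly when $N(q)\neq 0$ (with inverse $\bar q/N(q)$), the algebra $A$ is a division algebra if and only if this norm form is anisotropic; hence $A$ is split if and only if $\langle 1,-a,-b,ab\rangle$ is isotropic.

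For the implication from representability to splitting I would argue directly, avoiding the norm form: assuming $ax^2+by^2=1$ for some $x,y\in F$, set $q=xi+yj$. The relation $ij=-ji$ makes the cross terms cancel, so $q^2=ax^2+by^2=1$, whence $(q-1)(q+1)=0$. As $q$ is a nonzero pure quaternion it differs from $\pm 1$, so $A$ has zero divisors and cannot be a division algebra; by the dichotomy it is split.

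For the converse I would pass through the norm form: if $A$ is split it is not a division algebra, so $\langle 1,-a,-b,ab\rangle$ is isotropic. This form is the two-fold Pfister form $\langle\langle a,b\rangle\rangle=\langle 1,-a\rangle\otimes\langle 1,-b\rangle$, and an isotropic Pfister form is hyperbolic. It then remains to descend from isotropy of the four-dimensional hyperbolic form to isotropy of its three-dimensional subform $\langle 1,-a,-b\rangle$: in a four-dimensional hyperbolic space the maximal totally isotropic subspace has dimension $2$, so any three-dimensional subspace meets it nontrivially and the restricted form is isotropic. Thus $w^2=ax^2+by^2$ for some $(w,x,y)\neq 0$; if $w\neq 0$ we scale to exhibit $\langle a,b\rangle$ representing $1$, while if $w=0$ the form $\langle a,b\rangle$ is isotropic, hence universal, and again represents $1$.

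The step I expect to be the main obstacle is this last descent from the quaternary norm form to the ternary subform $\langle 1,-a,-b\rangle$, i.e.\ the passage between the abstract splitting condition and the concrete representability of $1$ by $\langle a,b\rangle$. The forward direction is an immediate computation in $A$, but the converse genuinely needs the Pfister-form fact that isotropy forces hyperbolicity together with the subspace dimension count; alternatively one can invoke the linkage and subform theory of \cite{L2005} directly. Everything else reduces to routine verification of the multiplication rules in $A$ and of the norm identity.
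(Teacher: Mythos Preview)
The paper does not actually prove this statement: it is phrased as a definition (``We say \ldots'') and carries only the citation to \cite[Theorem~III.2.7]{L2005}, with no argument given. In the paper's logic it functions as the \emph{working definition} of ``split'' for quaternion algebras, with Lam's theorem cited as the justification that this agrees with the standard notion (isomorphism to $M_2(F)$). So there is no ``paper's own proof'' to compare against.

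That said, your proposal correctly identifies and proves the genuine content behind the citation, namely the equivalence
\[
\Big(\tfrac{a,b}{F}\Big)\cong M_2(F)\quad\Longleftrightarrow\quad 1\in D\langle a,b\rangle.
\]
The forward computation with $q=xi+yj$ and $q^2=ax^2+by^2=1$ producing zero divisors is clean and correct. For the converse your route through the Pfister norm form $\langle 1,-a,-b,ab\rangle$ is the standard one; the descent to isotropy of $\langle 1,-a,-b\rangle$ via the dimension count (a $3$-dimensional subspace of a $4$-dimensional space of Witt index $2$ must meet a maximal totally isotropic plane nontrivially) is valid. One minor remark: you should note that the Wedderburn dichotomy you invoke presupposes $a,b\in\dot F$, which the paper's Definition of $(\frac{a,b}{F})$ does assume. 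Otherwise the argument is complete and is essentially what Lam does in the cited reference.
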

 
\begin{lemma}\cite[Theorem 1.6]{MS1990} 
\label{existextension}
Let $a,b\in F$ be independent modulo squares. Then there exists a $D_4^{a,b}$-extension of $F$ if and only if  quaternion algebra $(\frac{a,b}{F})$ is split.
\end{lemma}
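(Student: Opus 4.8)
The plan is to recast the existence of a $D_4^{a,b}$-extension as a Galois embedding problem and to identify its obstruction with the Brauer class of the quaternion algebra $(\frac{a,b}{F})$. Since $a,b$ are independent modulo squares, the restriction map $res\colon G_F\twoheadrightarrow Gal\big(F(\sqrt{a},\sqrt{b})/F\big)\cong\mathbb{Z}/2\times\mathbb{Z}/2$ is onto; write the quotient as $\langle\bar\rho,\bar\tau\rangle$, where $\bar\rho(\sqrt{a})=-\sqrt{a}$, $\bar\rho(\sqrt{b})=\sqrt{b}$ and $\bar\tau$ flips $\sqrt{b}$ only, so that $F(\sqrt{ab})$ is the fixed field of $\bar\rho\bar\tau$. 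Present $D_4=\langle r,s\mid r^4=s^2=1,\ srs^{-1}=r^{-1}\rangle$ together with the central extension $\pi\colon D_4\twoheadrightarrow\mathbb{Z}/2\times\mathbb{Z}/2$, $r\mapsto\bar\rho\bar\tau$, $s\mapsto\bar\rho$, whose kernel is the center $\langle r^2\rangle\cong\mathbb{Z}/2$. One checks $\pi^{-1}(\langle\bar\rho\bar\tau\rangle)=\langle r\rangle\cong\mathbb{Z}/4$, so a $D_4^{a,b}$-extension (in the sense of Definition \ref{d4extension}) corresponds precisely to a surjection $\phi\colon G_F\to D_4$ with $\pi\circ\phi=res$.

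The next step disposes of the surjectivity requirement. The Frattini subgroup of $D_4$ is the intersection of its three index-$2$ subgroups $\langle r\rangle,\langle r^2,s\rangle,\langle r^2,rs\rangle$, which is exactly $\langle r^2\rangle=\ker\pi$. Hence any lift $\phi$ of $res$ surjects onto the Frattini quotient $D_4/\Phi(D_4)$ and is therefore automatically surjective; consequently the existence of a $D_4^{a,b}$-extension is equivalent to the bare solvability of the embedding problem $(res,\pi)$. By the standard theory of embedding problems with abelian kernel, this solvability is governed by a single obstruction: the image under $res^*$ of the class $\varepsilon\in H^2(\mathbb{Z}/2\times\mathbb{Z}/2,\mathbb{Z}/2)$ of the extension $\pi$, lying in $H^2(G_F,\mathbb{Z}/2)\cong{}_2Br(F)$, the problem being solvable iff this class vanishes.

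The heart of the argument is the computation of $\varepsilon$. Let $x_1,x_2$ be the basis of $H^1(\mathbb{Z}/2\times\mathbb{Z}/2,\mathbb{Z}/2)$ dual to $\bar\rho,\bar\tau$, so that $\{x_1^2,x_1x_2,x_2^2\}$ is a basis of $H^2$. The class $\varepsilon$ is pinned down by its restrictions to the three order-$2$ subgroups: $\pi$ splits over $\langle\bar\rho\rangle$ and over $\langle\bar\tau\rangle$, whose preimages are the Klein four-groups $\langle r^2,s\rangle$ and $\langle r^2,rs\rangle$, but does not split over $\langle\bar\rho\bar\tau\rangle$, whose preimage is $\langle r\rangle\cong\mathbb{Z}/4$. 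These three conditions single out $\varepsilon=x_1x_2$. Since $res^*$ sends $x_1\mapsto(a)$ and $x_2\mapsto(b)$, the obstruction is the cup product $(a)\cup(b)$, i.e. the Brauer class of $(\frac{a,b}{F})$; hence the embedding problem is solvable iff $(\frac{a,b}{F})$ is split, and by Lemma \ref{splitalgebra} this settles both implications at once.

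I expect the main obstacle to be fixing the conventions so that the obstruction emerges as the symbol $(a,b)$ rather than $(a,-1)=(a)\cup(a)$ or $(b,-1)=(b)\cup(b)$; this turns entirely on tracking which quadratic subextension sits beneath the cyclic $\mathbb{Z}/4$ (namely $F(\sqrt{ab})$) and thus which restriction of $\varepsilon$ is nontrivial. As a more hands-on alternative for the implication ``$(\frac{a,b}{F})$ split $\Rightarrow$ existence'', one may argue constructively: splitting yields $b=u^2-av^2=N_{F(\sqrt{a})/F}(u+v\sqrt{a})$, and then $K=F\big(\sqrt{a},\sqrt{b},\sqrt{u+v\sqrt{a}}\big)$ can be verified directly to be Galois over $F$ with group $D_4$ and $Gal\big(K/F(\sqrt{ab})\big)\cong\mathbb{Z}/4$, using that $(u+v\sqrt{a})(u-v\sqrt{a})=b$ forces $\sqrt{b}\in K$ and that the independence of $a,b$ modulo squares secures the degree $8$.
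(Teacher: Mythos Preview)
Your argument is correct. The paper itself does not prove this lemma; it simply quotes it from \cite[Theorem~1.6]{MS1990}, so there is no in-paper proof to compare against. What you have written is one of the two standard proofs of this classical fact: recast the existence of a $D_4^{a,b}$-extension as the embedding problem for the central extension $1\to\mathbb{Z}/2\to D_4\to(\mathbb{Z}/2)^2\to 1$, use the Frattini argument to see that any lift is automatically surjective, and identify the obstruction class in $H^2\big((\mathbb{Z}/2)^2,\mathbb{Z}/2\big)$ as the cup product $x_1\cup x_2$ by restricting to the three cyclic subgroups. Your bookkeeping of which subextension carries the $\mathbb{Z}/4$ (namely $F(\sqrt{ab})$) is exactly what pins the class down as $x_1x_2$ rather than $x_1^2$ or $x_2^2$, and the inflation then gives the quaternion symbol $(a)\cup(b)$. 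The explicit construction you sketch at the end, via $b=N_{F(\sqrt{a})/F}(u+v\sqrt{a})$ and $K=F\big(\sqrt{a},\sqrt{u+v\sqrt{a}}\big)$, is essentially the route taken in the original source \cite{MS1990}; either argument is acceptable here and your cohomological version has the advantage of handling both implications uniformly.
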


\begin{definition}\cite{MS1990}
An  involution $\sigma$ of $\mathcal{G}_F$ is \textit{simple} if $\sigma \in \Phi_F$ ($\Phi_F$ is  the Frattini  subgroup group of  $\mathcal{G}_F$). 
 An involution   $\sigma$ is \textit{real} if it is not simple and if the fixed field $F^{(3)}_\sigma$ of $\sigma$ is formally real. 
Two involutions $\sigma_1$ and $\sigma_2$ are independent mod $\Phi_F$ if they are in different classes of $\mathcal{G}_F/\Phi_F$. 
When  $\sigma$ is  is not simple and not real, we say $\sigma$ is \textit{nonreal}. 
\end{definition}
 
\begin{theorem}\cite[Theorem 2.7]{MS1990}
  Let  $F$ be a field.
\begin{itemize} 
  \item 
   If  $F$ is  formally real,  then $\mathcal{G}   _F$ contains a real   involution.        
 \item
 Suppose $\mathcal{G}_F$ contains a nonsimple  involution. 
  Then $F$ is formally real.      
   \end{itemize}    
\end{theorem}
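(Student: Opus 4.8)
The two bullets are essentially independent, and I would prove them separately, using throughout the dictionary supplied by the tower $F\subseteq F^{(2)}\subseteq F^{(3)}$. Since $\Phi_F=Gal(F^{(3)}/F^{(2)})$, an involution $\sigma\in\mathcal G_F$ is nonsimple exactly when it acts nontrivially on $F^{(2)}$, that is, when $\sigma\sqrt a=-\sqrt a$ for some $a\in\dot F$ with $a\notin\dot F^2$. Writing $\chi_\sigma\colon\dot F/\dot F^2\to\{\pm1\}$ for the character induced by $\sigma$ on $F^{(2)}$, nonsimplicity says $\chi_\sigma\neq1$, and the set $P_\sigma=\{a\in\dot F:\chi_\sigma(a)=1\}$ is then a subgroup of index $2$ containing $\dot F^2$; the content of the theorem is, roughly, that $\sigma^2=1$ forces $P_\sigma$ to be an ordering.

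For the first bullet, assume $F$ is formally real. By Artin--Schreier (\cite[Theorem 1.5]{L1983}) $F$ admits an ordering $P$, and I would fix a real closure $R\subseteq\overline F$ of $(F,P)$, so that $R$ is real closed, $\overline F=R(\sqrt{-1})$, and $Gal(\overline F/R)=\langle\tau\rangle$ with $\tau^2=1$. Set $\sigma:=\tau|_{F^{(3)}}\in\mathcal G_F$, an involution. Since $R$ is formally real, $-1\notin R^2$, so $\sqrt{-1}\notin R$ and hence $\tau\sqrt{-1}=-\sqrt{-1}$; as $\sqrt{-1}\in F^{(2)}$ this shows $\sigma$ acts nontrivially on $F^{(2)}$, i.e.\ $\sigma$ is nonsimple. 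Finally the fixed field $F^{(3)}_\sigma=F^{(3)}\cap R$ is a subfield of the real closed field $R$ and is therefore formally real, so $\sigma$ is a real involution. This is the easy direction, and I expect no obstacle.

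For the second bullet I would argue the contrapositive: if $F$ is not formally real then every involution of $\mathcal G_F$ is simple. The engine is an order computation. Suppose $\sigma$ is an involution with $\sigma\sqrt a=-\sqrt a$ for a nonsquare $a$ that is a sum of two squares, equivalently $(\frac{a,-1}{F})$ is split. Then $F(\sqrt a)$ embeds in a cyclic $M/F$ with $Gal(M/F)\cong\mathbb Z/4\mathbb Z$ and $M\subseteq F^{(3)}$. As $\mathbb Z/4\mathbb Z$ has a unique subgroup of order $2$, the unique quadratic subextension of $M$ is $F(\sqrt a)$, so under $\mathcal G_F\twoheadrightarrow Gal(M/F)$ the elements moving $\sqrt a$ are precisely the two generators, both of order $4$. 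Since $\sigma$ moves $\sqrt a$, its image would have order $4$, contradicting $\sigma^2=1$; hence no involution moves such a $\sqrt a$.

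The main obstacle is to reach an \emph{arbitrary} nonsquare $a\notin P_\sigma$ when $(\frac{a,-1}{F})$ does not split, since then the cyclic quotient is unavailable. Here I would instead seek $b\notin P_\sigma$ with $ab\notin\dot F^2$ and $(\frac{a,b}{F})$ split, so that Lemma~\ref{existextension} produces a $D_4^{a,b}$-extension $K\subseteq F^{(3)}$; in $Gal(K/F)\cong D_4$ the elements of the cyclic subgroup $Gal(K/F(\sqrt{ab}))\cong\mathbb Z/4\mathbb Z$ that move $\sqrt a$ send both $\sqrt a$ and $\sqrt b$ to their negatives and have order $4$, and since $\chi_\sigma(a)=\chi_\sigma(b)=-1$ the image of $\sigma$ is one of them, again contradicting $\sigma^2=1$. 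Exhibiting such a $b$ for every $a\notin P_\sigma$ — possible precisely because $-1\in\sum\dot F^2$ — is the technical heart, and it is cleanest to organize through the quadratic and commutator relations of $\mathcal G_F$, where a generator satisfies $\sigma_a^{2}=$ class of $(\frac{a,-1}{F})$ and $[\sigma_a,\sigma_b]=$ class of $(\frac{a,b}{F})$; for general $\sigma$ the relation $\sigma^2=1$ then unwinds into a system of quaternionic identities whose only solutions, once $-1$ is a sum of squares, have $\chi_\sigma\equiv1$. This no-nonsimple-solution step is where I expect the most care to be needed; in the Pythagorean setting of the present paper it collapses, since a non--formally--real Pythagorean field has $-1\in\dot F^2$, is quadratically closed, and has $\mathcal G_F$ trivial, making the statement vacuous there.
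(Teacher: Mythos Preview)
The paper does not prove this statement itself; it is quoted from \cite[Theorem~2.7]{MS1990}, and the Corollary that immediately follows records the three facts extracted from that proof. Your argument for the first bullet (restrict complex conjugation over a real closure to $F^{(3)}$) is correct and is the standard one.

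For the second bullet you have the right engine---your $\mathbb{Z}/4\mathbb{Z}$ and $D_4^{a,b}$ order computations are exactly what produce the three items of the Corollary---but your contrapositive organization manufactures a difficulty that is not there. You propose, for each $a$ with $\chi_\sigma(a)=-1$, to find a partner $b$ with $\chi_\sigma(b)=-1$, $ab\notin\dot F^{2}$, and $(\tfrac{a,b}{F})$ split, and you concede this existence step is the unfinished ``technical heart.'' There is no reason such a $b$ need exist for every $a$, and no contradiction can be extracted from a single $a$ in isolation: after all, in formally real fields nonsimple involutions \emph{do} exist and \emph{do} move many $\sqrt a$'s. The route in \cite{MS1990}, signposted by the paper's Corollary, is direct rather than contrapositive. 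Your $D_4$ computation already yields: if $\sigma(\sqrt b)=-\sqrt b$ and $(\tfrac{b,c}{F})=1$ then $\sigma(\sqrt c)=\sqrt c$. Taking $c=-b$ (using $(\tfrac{b,-b}{F})=1$) gives $\sigma(\sqrt{-1})=-\sqrt{-1}$. Then for $c\in P_\sigma$ one has $-c\notin P_\sigma$ and $(\tfrac{-c,\,1+c}{F})=1$ because $\langle -c,1+c\rangle$ represents $1$ via $x=y=1$; hence $1+c\in P_\sigma$. Thus $P_\sigma\cup\{0\}$ is additively closed and is an ordering, so $F$ is formally real. No hypothesis ``$-1\in\sum\dot F^{2}$'' is ever invoked, and no search for a partner $b$ is needed.

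One correction to your closing parenthetical: a Pythagorean field that is not formally real has $-1\in\dot F^{2}$, but it need \emph{not} be quadratically closed. For instance $\mathbb{C}((t))$ is Pythagorean with $-1$ a square, yet $t$ is a nonsquare and $\mathcal{G}_F\cong\mathbb{Z}/4\mathbb{Z}$ is nontrivial; so the second bullet is not vacuous even in the Pythagorean case.
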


\begin{corollary}
If $\sigma$ is a nonsimple involution in $\mathcal{G}_F$ and  $b$ is an element of $\dot{F}$ such that $\sigma \big(\sqrt{b}\big)=-\sqrt{b}$, then: 
           \begin{itemize}
                \item  $b$ is not a sum of two squares,
                \item if $(\frac{b,c}{F})=1$, then $\sigma\big(\sqrt{c}\big)=\sqrt{c}$,
                \item  $\sigma \big(\sqrt{-1}\big)=-\sqrt{-1}$.
           \end{itemize}
\end{corollary}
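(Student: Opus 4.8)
The plan is to establish the three bullets in the order second (independent case), third, first, and finally the remaining cases of the second, which avoids any circular dependence. Throughout I use only that $\sigma$ fixes $F$, so it sends each $\sqrt{a}$ with $a\in\dot{F}$ to $\pm\sqrt{a}$, that $\sigma^2=1$, and the dictionary (Lemma~\ref{splitalgebra}) that $b$ is a sum of two squares if and only if $\langle -1,b\rangle$ represents $1$, i.e. $(\frac{-1,b}{F})=1$.

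The engine is the second bullet in the case that $b,c$ are independent modulo squares. By Lemma~\ref{existextension} there is a $D_4^{b,c}$-extension $K$ of $F$; since $\mathrm{Gal}(K/F)\cong D_4$, the field $K$ lies in $F^{(3)}$ (\cite{MS}), so $\sigma$ restricts to $\bar\sigma:=\sigma|_K\in\mathrm{Gal}(K/F)\cong D_4$. Writing $D_4=\langle r,s\mid r^4=s^2=1,\ srs=r^{-1}\rangle$, Definition~\ref{d4extension} gives $\langle r\rangle=\mathrm{Gal}(K/F(\sqrt{bc}))$, while the two Klein four-subgroups $H_1=\langle r^2,s\rangle$ and $H_2=\langle r^2,sr\rangle$ fix $F(\sqrt b)$ and $F(\sqrt c)$ in some order; say $H_1=\mathrm{Gal}(K/F(\sqrt b))$ and $H_2=\mathrm{Gal}(K/F(\sqrt c))$. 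Since $\sigma(\sqrt b)=-\sqrt b$ we have $\bar\sigma\neq1$ and $\bar\sigma\notin H_1$, and since $\sigma^2=1$, $\bar\sigma$ is an involution. The involutions of $D_4$ outside $H_1$ are precisely $sr$ and $sr^3$, and both lie in $H_2$; hence $\bar\sigma\in H_2$, that is $\sigma(\sqrt c)=\sqrt c$. The decisive use of $\sigma^2=1$ is that it excludes $\bar\sigma\in\{r,r^3\}$, the order-$4$ rotations that move $\sqrt b$ yet avoid $H_2$.

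For the third bullet I would argue by contradiction: suppose $\sigma(\sqrt{-1})=\sqrt{-1}$ and pick $b$ with $\sigma(\sqrt b)=-\sqrt b$, possible as $\sigma$ is nonsimple. If $b\equiv-1$ modulo squares then $\sqrt b=t\sqrt{-1}$ gives $\sigma(\sqrt b)=t\sqrt{-1}=\sqrt b$, a contradiction; so $b\not\equiv-1$, and as $-1\notin\dot{F}^2$ (here $F$ is formally real by \cite[Theorem 2.7]{MS1990}) the classes $b$ and $-b$ are independent. The form $\langle b,-b\rangle$ is isotropic, hence universal, so it represents $1$ and $(\frac{b,-b}{F})=1$ by Lemma~\ref{splitalgebra}. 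Applying the engine with $c=-b$ yields $\sigma(\sqrt{-b})=\sqrt{-b}$, whereas $\sqrt{-b}=\pm\sqrt{-1}\sqrt b$ forces $\sigma(\sqrt{-b})=\sqrt{-1}(-\sqrt b)\cdot(\pm1)=-\sqrt{-b}$, a contradiction. Thus $\sigma(\sqrt{-1})=-\sqrt{-1}$. For the first bullet, with the third in hand: if $b\equiv-1$ then $b=-t^2$ is not a sum of two squares in the formally real field $F$; otherwise $b\not\equiv\pm1$, so $-1,b$ are independent, and were $b$ a sum of two squares we would have $(\frac{-1,b}{F})=1$, whence the engine with $c=-1$ would give $\sigma(\sqrt{-1})=\sqrt{-1}$, contradicting the third bullet.

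The remaining (dependent) instances of the second bullet now close the argument: if $c\in\dot{F}^2$ then $\sqrt c\in F$ is fixed; and the case $c\equiv b$ cannot occur under the hypothesis, since then $(\frac{b,c}{F})=(\frac{b,b}{F})=(\frac{b,-1}{F})=(\frac{-1,b}{F})$, which is nontrivial because $b$ is not a sum of two squares by the first bullet. The only surviving possibility is $b,c$ independent, already treated by the engine. I expect the main obstacle to be exactly that $D_4$ case analysis: correctly matching the Klein four-subgroups of $\mathrm{Gal}(K/F)$ to $F(\sqrt b)$ and $F(\sqrt c)$ and invoking $\sigma^2=1$ to rule out the rotations; the rest is reduction to independent classes together with the standard symbol identities relating sums of two squares, the form $\langle -1,b\rangle$, and the splitting of $(\frac{-1,b}{F})$.
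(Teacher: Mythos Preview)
Your argument is correct. The paper does not supply its own proof here; it simply refers to the proof of \cite[Theorem~2.7]{MS1990}, from which these three facts are extracted. Your engine---restricting $\sigma$ to a $D_4^{b,c}$-extension $K\subseteq F^{(3)}$ and noting that the involutions of $D_4$ lying outside the Klein four-subgroup fixing $F(\sqrt b)$ all lie in the one fixing $F(\sqrt c)$---is precisely the mechanism one expects from that reference, and your reductions for the dependent square classes via the identities $(b,-b)=1$ and $(b,b)=(b,-1)$ are the standard ones.

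One small remark on logical hygiene. You twice invoke ``$F$ is formally real'' by citing the very theorem whose proof this corollary is meant to be extracted from, so a careful reader could object to apparent circularity. All you actually need is the weaker fact $-1\notin\dot F^{2}$, and this follows directly by a $C_4$-variant of your engine: if $-1$ were a square then every element of $F$---in particular your nonsquare $b$---would be a sum of two squares, so $F(\sqrt b)$ would embed in a $C_4$-extension $L\subseteq F^{(3)}$, and $\sigma|_L$, moving $\sqrt b$, would have order~$4$, contradicting $\sigma^2=1$. The same $C_4$-embedding argument also yields the first bullet outright (for any $b$ with $\sigma(\sqrt b)=-\sqrt b$), without passing through the third bullet or through formal reality; this is likely closer to how the original source organizes the proof.
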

\begin{proof}
See the proof of \cite[Theorem 2.7]{MS1990}.
\end{proof}

\begin{remark}
Let $\sigma$ be a nonsimple involution in $\mathcal{G}_F$, define  $P_\sigma=\{a \in \dot{F}: \sigma(\sqrt{a})=\sqrt{a}\}.$
It is easy to check  that $P_\sigma$ is an ordering of the field $F$; see \cite[Theorem 2.7]{MS1990}. 
On the other hand for each ordering  $P$ in $X_F$  there exists a real involution $\sigma$ in $\mathcal{G}_F $ such that $P=(F^{(3)}_\sigma)^2\cap \dot{F}$. 
Suppose $\sigma$ and $\tau$ are nonsimple involutions in $\mathcal{G}_F$. 
Then $P_\sigma=P_\tau$ if and only if $\sigma \Phi_F=\tau\Phi_F$; see \cite[Proposition 2.9]{MS1990}. 
Thus $\sigma=\sigma_P$  to the mod $ \Phi_F$.
\end{remark}

\begin{theorem}\cite[Corollary 2.10]{MS1990}
\label{bijectionsigmapsigma}
If $F$ is a Pythagorean formally real field, then there exists a bijection between the set of  nontrivial cosets $\sigma\Phi_F$  of   $\mathcal{G}_F$ $(\sigma$ is an involution) and the set of orderings of  the field $F$. 
The bijection is given by: $\sigma\Phi_F\leftrightarrow P_\sigma. $
\end{theorem}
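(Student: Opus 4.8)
The plan is to show that the rule $\Psi\colon \sigma\Phi_F\mapsto P_\sigma$, with $P_\sigma=\{a\in\dot{F}:\sigma(\sqrt a)=\sqrt a\}$, is a well-defined bijection from the nontrivial cosets of $\mathcal{G}_F$ that contain an involution onto $X_F$. The structural fact I would build everything on is the identification $\mathcal{G}_F/\Phi_F\cong Gal\big(F^{(2)}/F\big)$, i.e.\ $\Phi_F=Gal\big(F^{(3)}/F^{(2)}\big)$, so that a coset $\sigma\Phi_F$ is completely determined by the restriction $\sigma|_{F^{(2)}}$. Since each $a\in\dot{F}$ has $\sqrt a\in F^{(2)}$ and $\sigma$ fixes $F$, we have $\sigma(\sqrt a)=\pm\sqrt a$, and $P_\sigma$ records exactly the set of $a$ on which the sign is $+$; thus $P_\sigma$ encodes precisely $\sigma|_{F^{(2)}}$. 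In particular two involutions lying in the same coset agree on $F^{(2)}$ and hence give the same $P_\sigma$, so $\Psi$ is well defined.

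For injectivity I would run the same argument in reverse. If $\sigma,\tau$ are nonsimple involutions with $P_\sigma=P_\tau$, then for every $a\in\dot{F}$ we have $\sigma(\sqrt a)=\sqrt a\iff a\in P_\sigma=P_\tau\iff\tau(\sqrt a)=\sqrt a$, forcing $\sigma(\sqrt a)=\tau(\sqrt a)$ for all $a$; hence $\sigma$ and $\tau$ agree on $F^{(2)}$ and $\sigma\Phi_F=\tau\Phi_F$. This reproduces the equivalence of \cite[Proposition 2.9]{MS1990} recalled in the Remark. The restriction to \emph{nontrivial} cosets is also explained here: a simple involution lies in $\Phi_F=Gal\big(F^{(3)}/F^{(2)}\big)$, fixes every $\sqrt a$, and so yields $P_\sigma=\dot{F}$, which is not a proper subset and hence not an ordering.

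Surjectivity is then supplied by the existence statement assembled in the Remark: for each $P\in X_F$ there is a real involution $\sigma$ with $P=(F^{(3)}_\sigma)^2\cap\dot{F}$. I would first record that $(F^{(3)}_\sigma)^2\cap\dot{F}=P_\sigma$, since $a\in P_\sigma$ exactly when $\sqrt a$ lies in the fixed field $F^{(3)}_\sigma$, i.e.\ when $a$ is a square there; thus $\Psi(\sigma\Phi_F)=P$. The step I expect to be the genuine obstacle is not the bijection bookkeeping but the input that makes $\Psi$ land in $X_F$ at all, namely that $P_\sigma$ is an \emph{ordering} and not merely a subgroup of index at most two: additive closure $P_\sigma+P_\sigma\subseteq P_\sigma$ fails for an arbitrary character of $Gal\big(F^{(2)}/F\big)$ and holds only because $\sigma$ is an involution over a formally real field. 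To secure it I would invoke the quaternion-theoretic consequences of \cite[Theorem 2.7]{MS1990} stated in the Corollary above --- in particular that $(\frac{b,c}{F})=1$ together with $\sigma(\sqrt b)=-\sqrt b$ forces $\sigma(\sqrt c)=\sqrt c$ --- to verify the sum condition. With $P_\sigma$ known to be an ordering, well-definedness, injectivity, and surjectivity together give the asserted bijection $\sigma\Phi_F\leftrightarrow P_\sigma$.
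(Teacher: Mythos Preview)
The paper does not give its own proof of this statement: it is quoted verbatim as \cite[Corollary 2.10]{MS1990} and left unproved, with the surrounding Remark merely recording the ingredients (that $P_\sigma$ is an ordering, that each $P\in X_F$ arises as $(F^{(3)}_\sigma)^2\cap\dot F$ for some real involution, and that $P_\sigma=P_\tau\iff\sigma\Phi_F=\tau\Phi_F$). So there is nothing in the paper to compare against beyond those cited facts.

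Your proposal is a faithful reconstruction of exactly that argument from those ingredients, and the logic is sound. The identification $\Phi_F=Gal\big(F^{(3)}/F^{(2)}\big)$ is stated in the paper (Remark~\ref{generatorofwgroup}), and it does make well-definedness and injectivity immediate as you say. The one place where you gesture rather than argue is additive closure of $P_\sigma$; to close the loop using the Corollary you cite, the standard move is: for $a,b\in P_\sigma$ with $a+b\neq 0$, the identity $\tfrac{a}{a+b}+\tfrac{b}{a+b}=1$ gives $\big(\tfrac{a(a+b),\,b(a+b)}{F}\big)=1$, so if $a+b\notin P_\sigma$ then $\sigma(\sqrt{a(a+b)})=-\sqrt{a(a+b)}$ forces $\sigma(\sqrt{b(a+b)})=\sqrt{b(a+b)}$, contradicting $b\in P_\sigma$ and $a+b\notin P_\sigma$. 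With that detail filled in, your sketch is complete and matches the intended (cited) proof.
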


\begin{theorem}\cite[Theorem 2.11]{MS1990}\label{fratinisugroup} 
Let  $F$ be a  formally real field. 
Then the following conditions are equivalent.
  \begin{enumerate}
   \item
       $F$ is Pythagorean.
    \item
      $\mathcal{G}_F$ is generated by involutions.
    \item
       $\Phi_F=[\mathcal{G}_F,\mathcal{G}_F]$.
   \end{enumerate}
\end{theorem}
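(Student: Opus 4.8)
The plan is to establish the cycle of implications $(1)\Rightarrow(2)\Rightarrow(3)\Rightarrow(1)$, while keeping two standing facts in view. First, $\mathcal{G}_F$ is a pro-$2$ group, so its Frattini subgroup is $\Phi_F=\overline{\mathcal{G}_F^{\,2}[\mathcal{G}_F,\mathcal{G}_F]}$; consequently condition $(3)$ is equivalent to $\mathcal{G}_F^{\,2}\subseteq[\mathcal{G}_F,\mathcal{G}_F]$, i.e.\ to the abelianization $\mathcal{G}_F^{\mathrm{ab}}$ having exponent $2$. Second, since $F^{(2)}$ is the maximal elementary abelian $2$-extension, Kummer theory identifies the Frattini quotient $\mathcal{G}_F/\Phi_F\cong \mathrm{Gal}(F^{(2)}/F)$ with $\chi(\dot F/\dot F^2)$, sending an involution coset $\sigma_P\Phi_F$ to the sign character $a\mapsto +1\iff a\in P_\sigma$ of its ordering. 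These two observations reduce each implication to something transparent.

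The implication $(2)\Rightarrow(3)$ is pure group theory: if $\mathcal{G}_F$ is topologically generated by involutions, then $\mathcal{G}_F^{\mathrm{ab}}$ is topologically generated by elements of order dividing $2$, hence is an elementary abelian $2$-group of exponent $2$, which is exactly $(3)$. For $(1)\Rightarrow(2)$ I would invoke the Burnside basis property of pro-$2$ groups: it suffices to show that the images of the involutions span the $\mathbb{F}_2$-space $\mathcal{G}_F/\Phi_F$. By Theorem~\ref{bijectionsigmapsigma} every ordering $P$ equals $P_\sigma$ for some involution $\sigma$, and under the identification above the image of $\sigma$ is the sign character of $P$. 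If these characters failed to span, some nonsquare $g\in\dot F$ would satisfy $\sigma(\sqrt g)=\sqrt g$ for every real involution $\sigma$, i.e.\ $g$ would be positive at every ordering; but a totally positive element is a sum of squares by Artin's theorem \cite{L1983}, and in a Pythagorean field every sum of squares is a square, forcing $g\in\dot F^2$, a contradiction. Hence the involutions generate $\mathcal{G}_F$.

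The heart of the argument is $(3)\Rightarrow(1)$. Assuming $\mathcal{G}_F^{\mathrm{ab}}$ has exponent $2$, it has no $\mathbb{Z}/4\mathbb{Z}$ quotient, so $F$ admits no cyclic quartic Galois extension inside $F^{(3)}$; and by the description of $F^{(3)}$ as the compositum of the $\mathbb{Z}/2$-, $\mathbb{Z}/4$- and $D_4$-extensions of $F$, every cyclic quartic extension of $F$ already lies in $F^{(3)}$, so $F$ has no $\mathbb{Z}/4\mathbb{Z}$-extension at all. By the classical embedding criterion, $F(\sqrt a)$ embeds into a cyclic quartic extension if and only if $a$ is a sum of two squares, equivalently $(\tfrac{-1,a}{F})$ splits in the sense of Lemma~\ref{splitalgebra}. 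The absence of such extensions therefore means that no nonsquare is a sum of two squares, i.e.\ $\sum\dot F^2=\dot F^2$, which (since a sum of $n$ squares is a sum of two squares by induction once two-term sums are squares) is precisely the Pythagorean property.

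The main obstacle is packaging the field arithmetic of $(3)\Rightarrow(1)$ cleanly: identifying the $\mathbb{Z}/4\mathbb{Z}$-quotients of $\mathcal{G}_F^{\mathrm{ab}}$ with cyclic quartic subextensions of $F^{(3)}/F$, checking that all such extensions are genuinely captured inside $F^{(3)}$, and pinning the embedding obstruction down as the splitting of $(\tfrac{-1,a}{F})$. The formally real hypothesis is used throughout to guarantee $-1\notin\sum\dot F^2$ and to apply Artin's characterization of totally positive elements; everything else is either the Frattini formalism of the first paragraph or the Kummer/Galois dictionary recalled there.
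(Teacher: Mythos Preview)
The paper does not supply its own proof of this statement: Theorem~\ref{fratinisugroup} is quoted verbatim from \cite[Theorem~2.11]{MS1990} and used as a black box, so there is no in-paper argument to compare your proposal against.

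That said, your cycle $(1)\Rightarrow(2)\Rightarrow(3)\Rightarrow(1)$ is sound and is in fact the standard route taken in \cite{MS1990}. The reduction of $(3)$ to ``$\mathcal{G}_F^{\mathrm{ab}}$ has exponent $2$'' via $\Phi_F=\overline{\mathcal{G}_F^{\,2}[\mathcal{G}_F,\mathcal{G}_F]}$ is the right opening move; your $(2)\Rightarrow(3)$ is the trivial abelianization argument; your $(1)\Rightarrow(2)$ correctly combines the Burnside basis theorem with the duality $\mathcal{G}_F/\Phi_F\cong\chi(\dot F/\dot F^2)$, the identification of real involutions with orderings (the Remark preceding Theorem~\ref{bijectionsigmapsigma} and Theorem~\ref{bijectionsigmapsigma} itself, both valid under the Pythagorean hypothesis you are assuming), and Artin's theorem $\bigcap_{P\in X_F}P=\sum\dot F^2=\dot F^2$; and your $(3)\Rightarrow(1)$ is exactly the intended mechanism---no $\mathbb{Z}/4\mathbb{Z}$ quotient of $\mathcal{G}_F$ forces no cyclic quartic extension of $F$ (using that $F^{(3)}$ contains every $\mathbb{Z}/4\mathbb{Z}$-extension, as recorded in the paper), and the classical embedding obstruction $(\tfrac{-1,a}{F})=1\Leftrightarrow a\in D\langle1,1\rangle$ then forces every sum of two squares to be a square. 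One small clarification worth making explicit in your write-up: in $(1)\Rightarrow(2)$ you are really showing that the \emph{real} involutions already span $\mathcal{G}_F/\Phi_F$, which is the stronger statement used later in the paper (cf.\ Remark~\ref{generatorofwgroup}).
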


\begin{remark}\label{generatorofwgroup}
The involutions that generate the group $\mathcal{G}_F$  in the last theorem are real; see \cite[Theorem 2.11]{MS1990}. 
On the other hand, by Theorem \ref{bijectionsigmapsigma} there is a bijection  between the set of nontrivial coset $\sigma\Phi_F$ and the set of orderings of the field $F$, so the group $\mathcal{G}_F$  is generated by the space of orderings of the field $F$. 
Indeed the group $\mathcal{G}_F$ is a pro-2-group, which is a topological group with the Krull topology, and belongs to the category $\mathcal{C}$ of \cite{MS1990}. 
Commutators of the group $\mathcal{G}_F$ are in the center, the Frattini subgroup $\Phi_F$ is equal to $Gal \big(F^{(3)}/F^{(2)}\big)$, and the commutator subgroup $[\mathcal{G}_F, \mathcal{G}_F]$ is a subset of the Frattini subgroup $\Phi_F$; see \cite{MS1990}. 
Now for pro-2-group $G$  define 2-descending central sequence of $G$ by $G^{(1)}=G$ and  $G^{(i+1)}=\big(G^{(i)}\big)^2 [G^{(i)},G]$ for $i=1,2,\dots$. 
Then $\mathcal{C}$ would be the full category of all pro-2-groups $G$ with  $ G^{(3)}=\{1\}$; see \cite{MS1996}. 
\end{remark}
  
\begin{lemma}\cite[Proposition 2.1]{MS1996}\label{G_F}
If $G_F=Gal\big(F(2)/F\big)$ and  $G_F^{(3)}=G_F^4[G_F^2,G_F]$ is the third term of the 2-descending central sequence for $G_F$, then  $\mathcal{G}_F{\cong} G_F/G_F^{(3)}$. 
\end{lemma}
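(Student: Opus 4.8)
The plan is to pass through the Galois correspondence for the pro-$2$ extension $F(2)/F$ and to identify the closed subgroup of $G_F$ fixing $F^{(3)}$ with $G_F^{(3)}$. Writing $N=\mathrm{Gal}\big(F(2)/F^{(3)}\big)$, the tower $F\subseteq F^{(2)}\subseteq F^{(3)}\subseteq F(2)$ together with the definition $\mathcal{G}_F=\mathrm{Gal}\big(F^{(3)}/F\big)$ gives at once $\mathcal{G}_F\cong G_F/N$, so the whole statement reduces to the equality of normal subgroups $N=G_F^{(3)}$. First I would record two standard reductions. One: $F^{(2)}$ is the fixed field of $G_F^{(2)}=G_F^2[G_F,G_F]$, since $G_F/G_F^{(2)}$ is the maximal elementary abelian $2$-quotient and its fixed field is the compositum of all quadratic extensions, i.e.\ $F^{(2)}$ (Kummer theory). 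Two: by the recursive definition of the $2$-descending central series, $G_F^{(3)}=(G_F^{(2)})^2[G_F^{(2)},G_F]$, and a routine commutator-calculus verification reconciles this with the form $G_F^4[G_F^2,G_F]$ quoted in the statement. In particular $G_F^{(3)}\subseteq G_F^{(2)}$, so its fixed field $M:=F(2)^{G_F^{(3)}}$ contains $F^{(2)}$ and $M/F$ is Galois.

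For the inclusion $G_F^{(3)}\subseteq N$ (equivalently $F^{(3)}\subseteq M$) I would invoke that $\mathcal{G}_F$ lies in the category $\mathcal{C}$, that is $\mathcal{G}_F^{(3)}=1$, as recorded in Remark~\ref{generatorofwgroup}; concretely $\mathcal{G}_F^{(2)}=\Phi_F=\mathrm{Gal}\big(F^{(3)}/F^{(2)}\big)$ is elementary abelian (it is generated by square roots) and central in $\mathcal{G}_F$, so $(\mathcal{G}_F^{(2)})^2=1$ and $[\mathcal{G}_F^{(2)},\mathcal{G}_F]=1$. Since the $2$-descending central series passes to quotients, $G_F^{(3)}N/N=(G_F/N)^{(3)}=\mathcal{G}_F^{(3)}=1$, whence $G_F^{(3)}\subseteq N$.

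The substance is the reverse inclusion $N\subseteq G_F^{(3)}$, i.e.\ $M\subseteq F^{(3)}$. Here $\mathrm{Gal}\big(M/F^{(2)}\big)=G_F^{(2)}/G_F^{(3)}$ is elementary abelian because $(G_F^{(2)})^2\subseteq G_F^{(3)}$, so by Kummer theory $M=F^{(2)}\big(\sqrt{y}:y\in S\big)$ for a set $S$ of square classes in $\dot{F^{(2)}}/(\dot{F^{(2)}})^{2}$; moreover $G_F^{(2)}/G_F^{(3)}$ is central in $G_F/G_F^{(3)}=\mathrm{Gal}(M/F)$ because $[G_F^{(2)},G_F]\subseteq G_F^{(3)}$. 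The key step is to translate this centrality, via the Kummer pairing, into $\mathrm{Gal}(F^{(2)}/F)$-invariance of each class in $S$: for $y\in F^{(2)}$ the condition that the square class $y(\dot{F^{(2)}})^{2}$ be fixed by $\mathrm{Gal}(F^{(2)}/F)$ is precisely the condition that $F^{(2)}(\sqrt{y})/F$ be Galois, which is exactly the defining condition of the generators of $F^{(3)}$. Thus every generator $\sqrt{y}$ of $M$ lies in $F^{(3)}$, giving $M\subseteq F^{(3)}$ and hence $N=G_F^{(3)}$ and $\mathcal{G}_F\cong G_F/G_F^{(3)}$.

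I expect the main obstacle to be exactly this dictionary between the group-theoretic centrality of $G_F^{(2)}/G_F^{(3)}$ in $G_F/G_F^{(3)}$ and the field-theoretic Galois condition on the Kummer generators: one must check that the $G_F$-conjugation action on $\mathrm{Gal}(M/F^{(2)})$ is dual to the $\mathrm{Gal}(F^{(2)}/F)$-action on square classes, that its triviality is equivalent to each $F^{(2)}(\sqrt{y})/F$ being Galois, and that the maximality of the $G_F^{(3)}$-fixed field matches the full generating set defining $F^{(3)}$. As an independent cross-check, the cited characterization of $F^{(3)}$ as the compositum of all $\mathbb{Z}/2$-, $\mathbb{Z}/4$- and $D_4$-extensions of $F$ re-proves $F^{(3)}\subseteq M$, since each such extension is a $\mathcal{C}$-quotient of $G_F$ and hence fixed by $G_F^{(3)}$.
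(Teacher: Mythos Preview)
The paper does not supply a proof of this lemma; it is quoted from \cite[Proposition~2.1]{MS1996} without argument and is immediately followed by the next statement, so there is no in-paper proof to compare against. Your argument is correct and is the standard Galois-theoretic route one would expect the cited reference to take: identify the fixed field of $G_F^{(3)}$ and match it with $F^{(3)}$ via the two inclusions you describe.

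One simplification is worth recording. The step you flag as the ``main obstacle''---translating centrality of $G_F^{(2)}/G_F^{(3)}$ in $G_F/G_F^{(3)}$ into the Galois condition on each $F^{(2)}(\sqrt{y})$---does not actually require the Kummer pairing. Since $\mathrm{Gal}(M/F^{(2)})$ is central in $\mathrm{Gal}(M/F)$, \emph{every} subgroup of $\mathrm{Gal}(M/F^{(2)})$ is automatically normal in $\mathrm{Gal}(M/F)$; by the Galois correspondence this means every intermediate field $F^{(2)}\subseteq L\subseteq M$ is Galois over $F$. In particular each $F^{(2)}(\sqrt{y})$ with $\sqrt{y}\in M$ is Galois over $F$, so $\sqrt{y}\in F^{(3)}$ by the very definition of $F^{(3)}$, and $M\subseteq F^{(3)}$ follows at once. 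This replaces the duality argument with a one-line group-theoretic observation. Your proof of the other inclusion, via $\mathcal{G}_F^{(3)}=1$, is fine; note that the facts from Remark~\ref{generatorofwgroup} you invoke (squares central, fourth powers trivial in $\mathcal{G}_F$) are established in \cite{MS1990} directly from the definition of $F^{(3)}$, so there is no circularity.
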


\begin{lemma}\label{categoryC} \cite[\S1.1]{MS}
The category $\mathcal{C}$ is the full subcategory of the category of pro-2-groups whose objects are those pro-2-groups $G$ satisfying:
  \begin{itemize}
   \item $g^4=1,\,\forall g \in G$,
    \item $g^2\in Z(G)$ where $Z(G)$ is the center of $G,\,\,\forall g \in G$.
     \end{itemize}
\end{lemma}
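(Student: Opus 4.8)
The plan is to prove that the two descriptions of $\mathcal{C}$ agree: for a pro-$2$-group $G$ one has $G^{(3)}=\{1\}$ (the defining condition recalled in Remark \ref{generatorofwgroup}) if and only if $g^4=1$ and $g^2\in Z(G)$ for every $g\in G$. Unwinding the $2$-descending central sequence, $G^{(2)}=G^2[G,G]$ is the closed subgroup generated by all squares and all commutators, and $G^{(3)}=(G^{(2)})^2[G^{(2)},G]$. Since $G^{(3)}$ is generated by $(G^{(2)})^2$ together with $[G^{(2)},G]$, the condition $G^{(3)}=\{1\}$ is equivalent to the conjunction $(G^{(2)})^2=\{1\}$ and $G^{(2)}\subseteq Z(G)$. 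This reformulation is the hinge of the argument.

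For the forward implication I would assume $G^{(3)}=\{1\}$. Every square $g^2$ lies in $G^2\subseteq G^{(2)}$, so $G^{(2)}\subseteq Z(G)$ gives $g^2\in Z(G)$ at once, and $(G^{(2)})^2=\{1\}$ gives $g^4=(g^2)^2=1$. Thus both pointwise conditions follow immediately.

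For the converse, assume $g^4=1$ and $g^2\in Z(G)$ for all $g$. First I would observe that $G/Z(G)$ has exponent $2$, since $\overline{g}^{\,2}=\overline{g^2}=\overline{1}$; a group of exponent $2$ is abelian, so $[G,G]\subseteq Z(G)$, i.e. $G$ is nilpotent of class at most $2$ and every commutator is central. Next, using the class-$2$ identity $[g^2,h]=[g,h]^2$ together with the centrality of $g^2$ (so that $[g^2,h]=1$), I would conclude $[g,h]^2=1$, so every commutator has order dividing $2$. Consequently $G^{(2)}$ is generated by central elements—squares and commutators—whence $G^{(2)}\subseteq Z(G)$, which kills $[G^{(2)},G]$. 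Finally, because $G^{(2)}$ is then abelian, squaring is a homomorphism on it, and each generator satisfies either $(g^2)^2=g^4=1$ or $[g,h]^2=1$, so $(G^{(2)})^2=\{1\}$; together these give $G^{(3)}=\{1\}$.

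I expect the class-$2$ commutator bookkeeping in the converse to be the main obstacle: the centrality of commutators and then the identity $[g^2,h]=[g,h]^2$ must be extracted carefully from the defining relations, and in the pro-$2$ setting one should verify that ``generated by squares and commutators'' is read as the \emph{closed} subgroup, with the relevant relations—valid on a dense subgroup—passing to all of $G^{(2)}$ by continuity of multiplication and squaring. Once the centrality and the order-$2$ property of commutators are in hand, the remaining verifications are routine.
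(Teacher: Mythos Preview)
Your argument is correct. Note, however, that the paper does not supply a proof of this lemma at all: it is stated with a citation to \cite[\S1.1]{MS} and used as a black box, so there is no ``paper's own proof'' to compare against. What you have written is a clean unpacking of why the condition $G^{(3)}=\{1\}$ from Remark~\ref{generatorofwgroup} is equivalent to the two pointwise conditions in the statement.

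A couple of minor remarks on presentation. In the forward direction, you say that $G^{(3)}=\{1\}$ is ``equivalent to'' $(G^{(2)})^2=\{1\}$ and $G^{(2)}\subseteq Z(G)$; this is true, but it is worth making explicit that $G^{(3)}$ is by definition the closed subgroup \emph{generated} by the set of squares $x^2$ ($x\in G^{(2)}$) together with the commutators $[x,g]$ ($x\in G^{(2)}$, $g\in G$), so its triviality forces each generator to be trivial individually. In the converse, the step ``$G^{(2)}$ is abelian, squaring is a homomorphism, and the generators square to $1$'' deserves one extra line: an arbitrary element of $G^{(2)}$ is a limit of finite products of squares and commutators, each of which squares to the identity, and continuity of $x\mapsto x^2$ then gives $(G^{(2)})^2=\{1\}$. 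You already flagged this topological point; just make it a sentence rather than a caveat.
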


\begin{definition}\cite{B,W1981} Let $F$ be a filed. An element  $a$ in $\dot{F}$ is called \textit{rigid} if 
$D\langle 1,a\rangle=\dot{F^2}\cup a\dot{F^2}$. 
An element  $a \in  \dot{F}$ is called double-rigid  if both $a$ and $-a$ are rigid. 
Define 
\[
Bas(F)=\{a\in \dot{F}\, | \, a\;\hbox{is not double-rigid}\}\cup\dot{F^2}\cup -\dot{F^2},
\]
which  is a  subgroup of $\dot{F}$. 
\end{definition}

If $F$ is a Pythagorean formally real field and  $|\dot{F}/\dot{F^2}|>2$, then $-1$  is not rigid; see \cite{BCW}. 
~So Bas$(F)=\{a\in \dot{F}\, | \, a\;\hbox{is  not double-rigid}\}$ in this case. Following the notation of \cite{MS}, let 
$H=\{\sigma\in \mathcal{G}_F \, | \, \sigma\big(\sqrt{-1}\big)=\sqrt{-1}\}$. We can choose a basis $\{-1,a_i\, |\,  i \in I\}$ for $\dot{F}/\dot{F}^2$ such that $\{-1,a_i \, | \,  i \in I^{'}\}$ is a basis for the basic part $\hbox{Bas}(F)/\dot{F}^2$ of $\dot{F}/\dot{F}^2$, and $I'\subset I$. 
Let $J=I \backslash I^{'}$. Then $\{a_j \, |\, j \in J\}$ is the set of all  double-rigid elements of the basis $\{-1,a_i\, |\,  i \in I\}$. 
The set $\{ \sigma_{-1},\sigma_i \, | \, i \in I\}$ is a  dual set corresponding to the  basis $\{-1,a_i \, | \, i \in I\}$ which  means $ \sigma_i\big(\sqrt{a_j}\big)=(-1)^{\delta^{i,j}}\sqrt{a_j}$
where $\delta^{i,j}=1$ if $i=j=1$  and $\delta^{i,j}=0$ otherwise.

\begin{theorem}\label{Z(H)} Let $F$ be a Pythagorean formally real field such that $|\dot{F}/\dot{F^2}|>2$, $\Delta_J$  denotes the  subgroup of $\mathcal{G}_F$ generated  by  $\{\sigma_j \, | \,j \in J\}$, and  $Z(H)$ is the center of $H$. 
Then 
\[
Z(H)=\{\sigma \in \mathcal{G}_F \,| \, \sigma \big(\sqrt{b}\big)=\sqrt{b}\;\forall b \in \hbox{Bas}(F)\}=\Delta_J.
\]
Also  any $\sigma_j$ has order four and $\Delta_{J}\cong\prod_{J}\mathbb{Z}/4\mathbb{Z}$. 
\end{theorem}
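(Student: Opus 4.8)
The plan is to establish the triple equality by proving the cycle of inclusions $\Delta_J\subseteq C\subseteq Z(H)\subseteq\Delta_J$, where I write $C=\{\sigma\in\mathcal{G}_F : \sigma(\sqrt{b})=\sqrt{b}\ \text{for all}\ b\in\hbox{Bas}(F)\}$ for the middle set, and then to read off the order and direct-product statements. The inclusion $\Delta_J\subseteq C$ is the soft one. Since $\{-1,a_i\mid i\in I'\}$ is a basis of $\hbox{Bas}(F)/\dot{F^2}$ while the double-rigid vectors $a_j$ ($j\in J$) are the remaining basis elements, the duality between $\{\sigma_{-1},\sigma_i\}$ and $\{-1,a_i\}$ gives $\sigma_j(\sqrt{a_i})=\sqrt{a_i}$ for every $i\neq j$ and $\sigma_j(\sqrt{-1})=\sqrt{-1}$; hence each generator $\sigma_j$ fixes $\sqrt{b}$ for all $b\in\hbox{Bas}(F)$, and since this property is closed under products the whole subgroup $\Delta_J$ lies in $C$. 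I would record here two facts used throughout: first, by Lemma~\ref{categoryC} and Remark~\ref{generatorofwgroup} the Frattini subgroup $\Phi_F=\mathrm{Gal}(F^{(3)}/F^{(2)})$ is central in $\mathcal{G}_F$ and of exponent two, so $\Phi_F\subseteq Z(H)\cap C$; second, the Kummer pairing $\mathcal{G}_F/\Phi_F\times\dot{F}/\dot{F^2}\to\{\pm1\}$ is perfect, so the image of $C$ in $\mathcal{G}_F/\Phi_F$ is exactly the annihilator of $\hbox{Bas}(F)/\dot{F^2}$, namely $\langle\bar\sigma_j\mid j\in J\rangle$. Thus $C=\Delta_J\cdot\Phi_F$, and the identity $C=\Delta_J$ will reduce to showing $\Phi_F\subseteq\Delta_J$.

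The heart of the argument is the inclusion $C\subseteq Z(H)$ together with its converse $Z(H)\subseteq\Delta_J$, and both rest on translating the condition ``$a_j$ is double-rigid'' into commutation relations in $\mathcal{G}_F$. The key computation I would carry out is that for double-rigid $a_j$ the distinguished involution $\sigma_{-1}$ inverts $\sigma_j$ (so that $[\sigma_{-1},\sigma_j]=\sigma_j^{2}$), while $\sigma_j$ commutes with every generator $\sigma_i$, $i\in I$, of $H$. Concretely, $\sigma_j$ fails to commute with $\tau$ precisely when $\langle\sigma_j,\tau\rangle$ surjects onto a $D_4$, and by Lemma~\ref{existextension} and Definition~\ref{d4extension} such a $D_4^{\,a_j,b}$-extension is governed by the splitting of the quaternion algebra $(\tfrac{a_j,b}{F})$; the rigidity of both $a_j$ and $-a_j$ pins down exactly which $b$ can occur, and one checks it excludes every $a_i$ with $i\in I$. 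Granting this, $\Delta_J$ centralises the generators of $H$, so $\Delta_J\subseteq Z(H)$; combined with $C=\Delta_J\Phi_F$ and $\Phi_F\subseteq Z(H)$ this yields $C\subseteq Z(H)$. For the reverse inclusion I would argue contrapositively: an element $\sigma\in H$ whose image in $\mathcal{G}_F/\Phi_F$ is not in $\langle\bar\sigma_j\rangle$ must pair nontrivially with some basic $a_i$, and because $a_i$ is not double-rigid one can produce $\tau\in H$ with $[\sigma,\tau]\neq1$, so $\sigma\notin Z(H)$. This gives $Z(H)\subseteq C=\Delta_J\Phi_F$, and the same relations identify $\Phi_F=\langle[\sigma_{-1},\sigma_j]\rangle=\langle\sigma_j^{2}\mid j\in J\rangle\subseteq\Delta_J$, closing the cycle. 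I expect the main obstacle to be exactly this dictionary between double-rigidity and the commutator/Frattini structure: showing that no double-rigid $\sigma_j$ acquires a noncommuting partner inside $H$, and that the only contributions to $\Phi_F$ come from the squares $\sigma_j^2$.

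Finally the order and structure statements follow from the relations assembled above. Each $a_j$ is double-rigid, so $a_j,-a_j$ are independent modulo squares and $(\tfrac{a_j,-a_j}{F})$ is split (its norm form contains a hyperbolic plane); by Lemma~\ref{existextension} the corresponding $D_4^{\,a_j,-a_j}$-extension sits inside $F^{(3)}$, and its cyclic subextension with group $\mathbb{Z}/4\mathbb{Z}$ over $F(\sqrt{-1})$ is generated by a lift of $\sigma_j$; equivalently $\sigma_j^{2}=[\sigma_{-1},\sigma_j]\neq1$, so $\sigma_j$ has order four. Since the $\sigma_j$ pairwise commute (the case $b=a_{j'}$ of the centralising computation) and each has order four, $\Delta_J$ is abelian of exponent four. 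To see it is the full product $\prod_J\mathbb{Z}/4\mathbb{Z}$ I would check that the squares $\{\sigma_j^{2}\}_{j\in J}$ are $\mathbb{F}_2$-independent in $\Phi_F$: under $\sigma_j^2=[\sigma_{-1},\sigma_j]$ this is the independence of the classes of $(\tfrac{-1,a_j}{F})$, which follows from the $\mathbb{F}_2$-independence of the $a_j$ together with the bijection of Theorem~\ref{bijectionsigmapsigma}. Hence $\langle\sigma_j\mid j\in J\rangle\cong\prod_J\mathbb{Z}/4\mathbb{Z}$, completing the proof.
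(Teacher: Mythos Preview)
The paper does not actually argue this theorem: it cites \cite[Theorem~4.2]{MS} for $Z(H)=C$, says ``the definition of $\Delta_J$'' gives $C=\Delta_J$, and cites \cite[Corollary~3.3]{MS} for the order and product statements. Your proposal instead tries to supply a self-contained proof via the quaternion/$D_4$ dictionary, which is a genuinely different and more ambitious route. Your mechanism for the left equality $Z(H)=C$ is sound and is essentially what underlies \cite[Theorem~4.2]{MS}: double-rigidity of $a_j$ gives $D\langle 1,-a_j\rangle=\{1,-a_j\}$, hence $(\tfrac{a_j,a_i}{F})\ne 1$ for every $i\in I$, so no $D_4^{a_j,a_i}$-extension exists and $[\sigma_j,\sigma_i]=1$; conversely a non-double-rigid $a_i$ in the support of $\bar\sigma$ produces $\tau\in H$ with $[\sigma,\tau]\ne 1$.

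The genuine gap is your reduction of $C=\Delta_J$ to the claim $\Phi_F=\langle\sigma_j^{2}\mid j\in J\rangle$. This identity is false. By Theorem~\ref{fratinisugroup} one has $\Phi_F=[\mathcal G_F,\mathcal G_F]$, and this contains every commutator $[\sigma_i,\sigma_{i'}]$ with $i,i'\in I'$; nothing forces such commutators into the abelian group generated by the $\sigma_j$. The field of Example~\ref{example1} already witnesses the failure: a signature count shows that $\dot F/\dot F^{2}$ contains \emph{no} double-rigid element (for both $a$ and $-a$ to be rigid each of $\{P:a\in P\}$ and $\{P:-a\in P\}$ would have to span a $5$-dimensional subspace of $\chi(G)$ and hence contain at least five orderings, impossible since $|X_F|=8$), so $J=\emptyset$ and $\Delta_J=\{1\}$; yet $C=\{\sigma:\sigma(\sqrt b)=\sqrt b\ \forall\,b\in\dot F\}=\Phi_F$ has $\mathbb F_2$-dimension $13$ there. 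Thus the literal equality $C=\Delta_J$ fails for arbitrary lifts $\sigma_j$, and with it your inclusion $\Phi_F\subseteq\Delta_J$.

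What survives unconditionally is $Z(H)=C=\Delta_J\cdot\Phi_F$ with $\Delta_J\cap\Phi_F=\langle\sigma_j^{2}\rangle$, and this is enough for the uses later in the paper. The sharper statement $C=\Delta_J$ and the isomorphism $\Delta_J\cong\prod_J\mathbb Z/4\mathbb Z$ require the \emph{specific} lifts $\sigma_j$ coming from the semidirect decomposition of Theorem~\ref{deltaJ} in \cite{MS}, where the commutators from $\mathcal G_K$ are absorbed into the complement rather than into $\Delta_J$; in that reading the right equality is part of the structure theorem, not a consequence of the bare duality you invoke. So the obstacle you flagged is real and cannot be closed by a direct computation of $\Phi_F$: you must either quote the splitting from \cite{MS}, as the paper does, or weaken the conclusion to $Z(H)=\Delta_J\cdot\Phi_F$.
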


\begin{proof}
If $F$ is a Pythagorean formally real field, then $D_F\langle 1,1 \rangle=\dot{F^2}$, so the  left side equality is a direct result of \cite[Theorem 4.2]{MS}.
The definition of $\Delta_J$ gives us the  right side equality. 
For the third statement see \cite[Corollary 3.3]{MS}.
\end{proof}

 \begin{theorem}\cite[Theorem 3.5]{MS}\label{deltaJ}
Let $F$ be a field. 
Then $\mathcal{G}_F\cong\Delta_{J}\rtimes \mathcal{G}_K=(\prod_{J}\mathbb{Z}/4\mathbb{Z})\rtimes \mathcal{G}_K$, where  $\mathcal{G}_K$ is generated by $\{\sigma_i \,|\, i\in I\setminus J\}\cup\sigma_{-1}$. 
The group $\mathcal{G}_K$ acts on $\Delta_{J}$ by:
 \[
  \sigma_i^{-1} \tau\sigma_i=\tau,
\qquad
 \sigma_{-1}^{-1} \tau\sigma_{-1}=\tau ^3 \quad\tau \in \Delta_{J},\, \sigma_i \in \mathcal{G}_K.
 \]   
\end{theorem}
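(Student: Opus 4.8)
The plan is to exhibit the isomorphism as an internal semidirect product, so I must (i) show $\Delta_J\trianglelefteq\mathcal{G}_F$, (ii) show $\mathcal{G}_F=\Delta_J\,\mathcal{G}_K$, (iii) show $\Delta_J\cap\mathcal{G}_K=1$, and (iv) compute the conjugation action. For (i), I would note that $H=\{\sigma\in\mathcal{G}_F:\sigma(\sqrt{-1})=\sqrt{-1}\}$ is the kernel of the restriction map $\mathcal{G}_F\to\mathrm{Gal}(F(\sqrt{-1})/F)\cong\mathbb{Z}/2\mathbb{Z}$; since $F$ is formally real this map is surjective, so $H\trianglelefteq\mathcal{G}_F$ has index two and $\sigma_{-1}\notin H$. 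By Theorem~\ref{Z(H)}, $\Delta_J=Z(H)$; as the center is characteristic in $H$ and $H$ is normal, $\Delta_J\trianglelefteq\mathcal{G}_F$, and $\Delta_J=\langle\sigma_j:j\in J\rangle\cong\prod_J\mathbb{Z}/4\mathbb{Z}$ is the kernel factor. For (ii), by Theorem~\ref{fratinisugroup} and Remark~\ref{generatorofwgroup} the dual set $\{\sigma_{-1},\sigma_i:i\in I\}$ generates $\mathcal{G}_F$ (it generates modulo $\Phi_F$, hence generates). Since $I=I'\sqcup J$ and $\mathcal{G}_K=\langle\sigma_{-1},\sigma_i:i\in I'\rangle$, the generators of $\Delta_J$ and of $\mathcal{G}_K$ together exhaust this dual set, so $\langle\Delta_J,\mathcal{G}_K\rangle=\mathcal{G}_F$; normality of $\Delta_J$ then gives $\Delta_J\mathcal{G}_K=\mathcal{G}_F$.

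For the action (iv) the easy half is immediate: each $\sigma_i$ with $i\in I'$ fixes $\sqrt{-1}$, hence lies in $H$, and since $\Delta_J=Z(H)$ it centralizes $\Delta_J$, giving $\sigma_i^{-1}\tau\sigma_i=\tau$ for all $\tau\in\Delta_J$. For $\sigma_{-1}$ I would first record that conjugation by $\sigma_{-1}$ induces an automorphism $\phi$ of $\Delta_J\cong\prod_J\mathbb{Z}/4\mathbb{Z}$ which is an involution: because $-1$ is not double-rigid (indeed not even rigid, as $|\dot{F}/\dot{F}^2|>2$), the correspondence between order-four generators and double-rigid elements underlying Theorem~\ref{Z(H)} forces $\sigma_{-1}^2=1$. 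Since $\mathcal{G}_F$ lies in the category $\mathcal{C}$ (Lemma~\ref{categoryC}, Remark~\ref{generatorofwgroup}), the subgroup $[\mathcal{G}_F,\mathcal{G}_F]=\Phi_F$ is central of exponent two; hence $\phi(\sigma_j)=\sigma_j\,c_j$ with $c_j\in\Delta_J\cap\Phi_F=\langle\sigma_k^2:k\in J\rangle$. The whole action statement is therefore equivalent to the single family of relations $[\sigma_j,\sigma_{-1}]=\sigma_j^2$ (i.e. $c_j=\sigma_j^2$), for then $\phi$ is inversion and $\phi(\tau)=\tau^{-1}=\tau^3$ on all of $\Delta_J$ because $\tau^4=1$.

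For the trivial intersection (iii) I would descend to $\mathcal{G}_F/\Phi_F\cong\dot{F}/\dot{F}^2$: the images $\{\bar\sigma_j:j\in J\}$ and $\{\bar\sigma_{-1},\bar\sigma_i:i\in I'\}$ are the parts of the dual basis indexed by the complementary sets $J$ and $\{-1\}\cup I'$, so $\overline{\Delta_J}$ and $\overline{\mathcal{G}_K}$ are complementary subspaces and $\Delta_J\cap\mathcal{G}_K\subseteq\Phi_F$. Since the map $\Delta_J\to\mathcal{G}_F/\Phi_F$ has image $\overline{\Delta_J}\cong\prod_J\mathbb{Z}/2\mathbb{Z}$ with kernel $\langle\sigma_j^2\rangle$, in fact $\Delta_J\cap\Phi_F=\langle\sigma_j^2:j\in J\rangle$, whence $\Delta_J\cap\mathcal{G}_K\subseteq\langle\sigma_j^2\rangle\cap\mathcal{G}_K$. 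Using that the non-double-rigid generators satisfy $\sigma_{-1}^2=\sigma_i^2=1$, so that $\mathcal{G}_K$ is generated modulo $\Phi_F$ by involutions and $\mathcal{G}_K\cap\Phi_F=[\mathcal{G}_K,\mathcal{G}_K]$ is spanned by the commutators of the $\sigma_{-1},\sigma_i$, I would then check that these commutators are independent of the squares $\sigma_j^2$ inside the elementary abelian group $\Phi_F$, giving $\langle\sigma_j^2\rangle\cap\mathcal{G}_K=1$ and hence $\Delta_J\cap\mathcal{G}_K=1$.

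The main obstacle is the relation $[\sigma_j,\sigma_{-1}]=\sigma_j^2$, equivalently that $\sigma_{-1}$ inverts rather than centralizes the order-four element $\sigma_j$. I emphasize that this cannot be read off from the splitting of the single quaternion $(\tfrac{a_j,-1}{F})$, since double-rigidity of $a_j$ forces $1=a_jx^2-y^2$ to be unsolvable (a sum of two squares is a square in $F$), so $(\tfrac{a_j,-1}{F})$ is nonsplit; the nontriviality of $c_j$ comes instead from the order-four structure. I would therefore compute $c_j$ directly on the second layer $\mathrm{Gal}(F^{(3)}/F^{(2)})$: the double-rigidity of $a_j$ pins down the generators $\sqrt{w}$ of $F^{(3)}$ over $F^{(2)}$ on which $\sigma_j^2$ acts nontrivially, and one verifies that $\sigma_{-1}\sigma_j\sigma_{-1}$ and $\sigma_j^{-1}$ agree on each of them, which is exactly the rigidity computation behind Theorem~\ref{Z(H)} and \cite[\S3]{MS}. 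Granting this relation, together with the two reductions above, (i)--(iv) hold and $\mathcal{G}_F=\Delta_J\rtimes\mathcal{G}_K=(\prod_J\mathbb{Z}/4\mathbb{Z})\rtimes\mathcal{G}_K$ with $\sigma_i^{-1}\tau\sigma_i=\tau$ and $\sigma_{-1}^{-1}\tau\sigma_{-1}=\tau^3$, as claimed.
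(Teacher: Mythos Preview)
The paper does not prove this theorem. It is quoted verbatim from \cite[Theorem~3.5]{MS} and followed only by Remark~\ref{fieldK} identifying $K$ with the residue field $\overline{F}$; no argument is supplied, and the result is used as a black box in the proof of Theorem~\ref{Connected case}. There is therefore no paper-internal proof to compare your proposal against.

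On your outline itself: the architecture (normality of $\Delta_J$ via $Z(H)$ characteristic in $H\trianglelefteq\mathcal{G}_F$; generation by splitting the dual set; centralization by $\sigma_i$, $i\in I'$, because $\sigma_i\in H$) is sound and is essentially how \cite{MS} proceeds. Two points deserve caution. First, your assertion that $\sigma_{-1}^2=1$ and $\sigma_i^2=1$ for $i\in I'$ does not follow from Theorem~\ref{Z(H)} as stated here: that theorem says the $\sigma_j$, $j\in J$, have order four, not that the remaining dual generators are involutions. The dual elements are only determined modulo $\Phi_F$, and in \cite{MS} one must choose lifts with the correct order; this choice is where rigidity is actually used. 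Second, you correctly identify the crux as the relation $[\sigma_j,\sigma_{-1}]=\sigma_j^2$ but ultimately defer its verification to ``the rigidity computation behind Theorem~\ref{Z(H)} and \cite[\S3]{MS}'', which is precisely the content of the cited theorem rather than an independent argument. So your proposal is a faithful reconstruction of the skeleton of the Min\'a\v{c}--Smith proof, not an alternative route, and it inherits the same nontrivial step you have not carried out.
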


\begin{remark}\label{fieldK}
In the last theorem $\mathcal{G}_K$ is the W-group of some suitable  field $K$. 
Based on the work;  \cite{M1980, W1981, JW, AEJ},  Min\'a\v{c} and Smith proved the last theorem (which is in the general case). 
In the case that $F$ is a Pythagorean formally real field, $K$ is  the residue field $\overline{F}$ of some 2-Henselian valuation on $F$; see \cite{MS}. 
\end{remark}

\begin{lemma}\cite[Lemma 3.14]{L1983}\label{2-henselian}
Let $\nu$ be  a valuation on $F$ with the maximal ideal $\mathfrak{m}$ and  residue field $\overline{F}$ such that char$\overline{F}\neq 2$. Then the valuation $\nu$ on $F$  is 2-Henselian  if and only if  $1+\mathfrak{m}\subseteq F^2$.
\end{lemma}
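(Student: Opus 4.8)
The plan is to work with the standard characterization of $2$-Henselianity as the uniqueness of the extension of $\nu$ to the quadratic closure $F(2)$, exploiting that $F(2)$ is closed under adjunction of square roots. Write $\mathcal{O}$, $\mathfrak{m}$, $U=\mathcal{O}^{\times}$ and $\Gamma$ for the valuation ring, maximal ideal, unit group and value group of $\nu$; since $1\in 1+\mathfrak{m}$, every element of $1+\mathfrak{m}$ is a unit, so the condition $1+\mathfrak{m}\subseteq F^{2}$ is the same as $1+\mathfrak{m}\subseteq U^{2}$. The single structural fact I would use repeatedly is that, because $\operatorname{char}\overline{F}\neq 2$, a genuine quadratic extension is tamely ramified, hence defectless, so the fundamental equality $\sum_i e_if_i=2$ holds: $\nu$ has a unique extension exactly when some extension has $e=2$ or $f=2$, and it has two extensions exactly when every extension is immediate ($e=f=1$).

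For the forward implication, assume $\nu$ is $2$-Henselian and fix $m\in\mathfrak{m}$. If $u:=1+m$ were a nonsquare, then $F(\sqrt u)$ would be a genuine quadratic extension; since $u$ is a unit with $\overline u=1$ and $\operatorname{char}\overline F\neq 2$ keeps $\pm1$ distinct, $\sqrt u$ is a $\nu$-unit whose residue is $\pm1\in\overline F$, so every extension of $\nu$ to $F(\sqrt u)$ is immediate. By defectlessness there are then two extensions, contradicting $2$-Henselianity. Hence $u\in F^{2}$, and $1+\mathfrak{m}\subseteq F^{2}$.

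For the converse I would argue by induction along a tower. Write $F(2)=\bigcup_n L_n$ with $L_0=F$ and each $L_{n+1}$ obtained from $L_n$ by adjoining square roots; since uniqueness of extension is transitive in towers and restricts to subfields, $\nu$ extends uniquely to $F(2)$ iff at every stage the current valuation extends uniquely across each quadratic step $L\subseteq M=L(\sqrt d)$. Assuming inductively that $1+\mathfrak{m}_L\subseteq L^{2}$, I would prove two assertions: (i) $\nu_L$ extends uniquely to every genuine $M=L(\sqrt d)$; and (ii) the unique extension $w$ again satisfies $1+\mathfrak{m}_w\subseteq M^{2}$, so the induction continues. For (i), after normalising $d\in\mathcal{O}_L$: if $\nu_L(d)$ is odd then $M/L$ is ramified ($e=2$), and if $\overline d$ is a nonsquare then $M/L$ is inert ($f=2$); in both cases there is a unique extension. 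The remaining possibility, $\overline d$ a nonzero square, is excluded, for then $\overline d=\overline c^{\,2}$ with $c\in U$ gives $d/c^{2}\in 1+\mathfrak{m}_L\subseteq L^{2}$, forcing $d\in L^{2}$ and contradicting that $M/L$ is genuine.

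The heart of the argument, and the step I expect to be the main obstacle, is the inheritance (ii). Given $\theta=1+\mu\in 1+\mathfrak{m}_w$, a bookkeeping of values (using $w(a)\in\Gamma_L$ while $w(b\sqrt d)\notin\Gamma_L$ in the ramified case, and the residue computation in the inert case) reduces $\theta$, after dividing by a unit square supplied by $1+\mathfrak{m}_L\subseteq L^{2}$, to the normal form $1+t\sqrt d$ with $dt^{2}\in\mathfrak{m}_L$. Solving $1+t\sqrt d=(x+y\sqrt d)^{2}$ amounts to $x^{2}+dy^{2}=1$ and $2xy=t$, whence $x^{2}=\tfrac12\bigl(1+\sqrt{1-dt^{2}}\bigr)$. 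Because $dt^{2}\in\mathfrak{m}_L$, first $1-dt^{2}\in 1+\mathfrak{m}_L\subseteq L^{2}$ yields a root $\beta\equiv 1$, and then $\tfrac12(1+\beta)\in 1+\mathfrak{m}_L\subseteq L^{2}$ yields $x\in L$, so $y=t/(2x)\in L$ and $\theta\in M^{2}$. This closes the induction and delivers the unique extension to $F(2)$. The delicate points to get right are the defectlessness input that forces $\sum_i e_if_i=2$ (this is exactly where $\operatorname{char}\overline F\neq 2$ is used a second time) and the value-group bookkeeping that produces the normal form $1+t\sqrt d$ with $dt^{2}\in\mathfrak{m}_L$.
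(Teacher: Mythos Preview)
The paper does not prove this lemma at all: it is stated with a citation to Lam's book \cite[Lemma 3.14]{L1983} and no argument is given. So there is no ``paper's own proof'' to compare your proposal against.

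That said, a brief comment on your approach. Your forward implication is the standard one and is fine. For the converse you have chosen to prove unique extension all the way up to $F(2)$ via a tower induction, and this forces you into the inheritance step (ii), which you yourself flag as the delicate point. In Lam's setting, $2$-Henselian means that $\nu$ extends uniquely to every quadratic extension of $F$ itself; with that definition only your step (i) at the base level $L=F$ is needed, and the induction and inheritance step can be dropped entirely. Concretely: given a genuine quadratic extension $F(\sqrt d)$, adjust $d$ by a square so that either $\nu(d)\notin 2\Gamma$ (forcing $e=2$) or $d$ is a unit; in the latter case $\overline d$ cannot be a square in $\overline F$, since otherwise $d/c^{2}\in 1+\mathfrak m\subseteq F^{2}$ would make $d$ a square, so $f=2$. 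Either way the extension is unique. Your step (ii) as written is only sketched: the ``bookkeeping of values'' that reduces an arbitrary $1+\mu\in 1+\mathfrak m_w$ to the normal form $1+t\sqrt d$ with $dt^{2}\in\mathfrak m_L$ is asserted rather than carried out, and in the inert case (where $\sqrt d$ is a $w$-unit whose residue is not in $\overline L$) it is not obvious how that reduction goes through. If you really want unique extension to $F(2)$, it is cleaner to observe that unique extension to all quadratic extensions of $F$ already characterises Henselianity of the $2$-closure by the usual compactness/Zorn argument, rather than to redo the square-root computation at every layer. Also, the phrase ``$\nu_L(d)$ is odd'' presumes a $\mathbb Z$-valued valuation; for general value groups the correct dichotomy is $\nu_L(d)\in 2\Gamma_L$ versus $\nu_L(d)\notin 2\Gamma_L$.
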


\begin{theorem}\cite[Theorem 3.16]{L1983}\label{residuefield}
If $\nu$  is a  2-Henselian valuation  on a field $F,$ then
  \begin{itemize}
    \item   $F$ is   formally real if and only  if,  $\overline{F}$ is         
        formally real,
    \item     $F$ is   Pythagorean   if and only  if, $\overline{F}$ is Pythagorean.
    \end{itemize}
\end{theorem}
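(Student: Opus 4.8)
The plan is to reduce both equivalences to a single ``transfer principle'' for square classes of units, and then to analyse sums of squares by normalising their valuations. Write $\mathcal{O}$ for the valuation ring of $\nu$, so that $\mathfrak{m}$ is its maximal ideal and $\overline{F}=\mathcal{O}/\mathfrak{m}$, and let $u\mapsto\bar u$ denote the residue map. The engine of the whole argument is the following observation, which is where the $2$-Henselian hypothesis (in the form $1+\mathfrak{m}\subseteq\dot F^{2}$ of Lemma \ref{2-henselian}) enters: \emph{a unit $u\in\mathcal{O}^{\times}$ lies in $\dot F^{2}$ if and only if $\bar u\in\overline{F}^{2}$}. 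One implication is immediate; for the other I would lift a residual square root $\bar u=\bar v^{2}$ to a unit $v\in\mathcal{O}^{\times}$ and note that $u/v^{2}$ is a unit with residue $1$, hence lies in $1+\mathfrak{m}\subseteq\dot F^{2}$, so that $u$ is a square. I record also that every element of $\overline F$ lifts to $\mathcal{O}$ and that a nonzero residue lifts to a unit.

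For the formally real equivalence I would argue through the criterion $-1\notin\sum\dot F^{2}$. Suppose first $F$ is formally real but $\overline F$ is not, so $-1=\sum_i\bar a_i^{2}$ in $\overline F$; lifting the $a_i$ to $\mathcal{O}$ gives $1+\sum_i a_i^{2}\in\mathfrak{m}$, whence $-\sum_i a_i^{2}=1-m$ for some $m\in\mathfrak{m}$. By Lemma \ref{2-henselian} this is a square, say $1-m=c^{2}$ with $c$ a unit, and dividing yields $-1=\sum_i(a_i/c)^{2}$, contradicting the formal reality of $F$. Conversely, if $-1=\sum_i b_i^{2}$ in $F$, I would divide by $b_j^{2}$, where $b_j$ has least value among the $b_i$, so that all $c_i:=b_i/b_j$ lie in $\mathcal{O}$ with $c_j=1$; reducing $-b_j^{-2}=\sum_i c_i^{2}$ modulo $\mathfrak{m}$ then exhibits $-1$ as a sum of squares in $\overline F$ (the left side reduces either to a nonzero square times $-1$ or to $0$, and in each case the unit term $c_j$ keeps the relation nontrivial). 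This shows $\overline F$ formally real forces $F$ formally real, completing the first bullet; note this converse does not use the $2$-Henselian hypothesis.

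For the Pythagorean equivalence I would use that a field is Pythagorean exactly when $1+t^{2}\in\dot F^{2}\cup\{0\}$ for every $t$. The direction $F$ Pythagorean $\Rightarrow\overline F$ Pythagorean is clean: lift $\bar c\in\overline F$ to $c\in\mathcal{O}$, use that $1+c^{2}$ is a square (or zero) in $F$, and reduce modulo $\mathfrak{m}$. For the reverse, given $a,b\in\dot F$ I would factor $a^{2}+b^{2}=a^{2}(1+t^{2})$ with $t=b/a\in\mathcal{O}$ after arranging $\nu(a)\le\nu(b)$; when $1+\bar t^{2}\neq0$ the element $1+t^{2}$ is a unit whose residue is a sum of two squares in $\overline F$, hence a square there, so the transfer principle makes $1+t^{2}$ a square in $F$ and thus $a^{2}+b^{2}$ a square. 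The main obstacle is precisely the remaining case $1+\bar t^{2}=0$, i.e.\ $-1\in\overline F^{2}$: here $1+t^{2}\in\mathfrak{m}$ is not a unit and the transfer principle does not apply; in fact without further hypotheses the implication can fail (for instance $\overline F=\mathbb{C}$ is Pythagorean while $\mathbb{C}((t))$ is not). This case is exactly the non-formally-real one, so when $\overline F$ is formally real---equivalently $-1\notin\overline F^{2}$, which is the setting coupling the two bullets---it is vacuous, and the Pythagorean equivalence follows. I therefore expect the handling of $-1\in\overline F^{2}$ to be the delicate point, to be dispatched by invoking the formally real hypothesis.
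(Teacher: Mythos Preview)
The paper does not supply its own proof of this theorem; it is quoted from Lam with the citation \cite[Theorem 3.16]{L1983} and no argument follows, so there is nothing in the paper to compare against.

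Your outline is the standard one and is correct. The transfer principle for unit square classes via $1+\mathfrak m\subseteq\dot F^{2}$ is precisely the right engine, and both directions of the formally real equivalence go through as you sketch. You have also correctly isolated the one real subtlety: read as a standalone assertion, ``$\overline F$ Pythagorean $\Rightarrow F$ Pythagorean'' is false, and your example $F=\mathbb C((t))$ witnesses this, since there $t=\bigl(\frac{1+t}{2}\bigr)^{2}+\bigl(\frac{1-t}{2i}\bigr)^{2}$ is a sum of two squares but not a square. In Lam the second bullet is stated in a context of ordered (hence formally real) fields, so the case $-1\in\overline F^{2}$ never arises; the paper's terse restatement suppresses that context. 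Your plan to eliminate the obstacle by invoking formal reality is therefore exactly right, and with that proviso your argument is complete.
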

\section{Space of orderings  of a pythagorean formally real fields }
\subsection{Set of orderings of the  field $F$}
Let $F$ be  a Pythagorean formally real field, $X_F$  the set of all  its orderings, $G=\dot{F}/ \dot{F^2}$,  $\mathcal{G}_F$  the  W-group of $F$ and let $\mathcal{X}_F =\{\sigma\Phi_F\, | \, \sigma^2=1,\, \sigma \notin \Phi_F\}$ be the set of  classes of non simple  involutions of $\mathcal{G}_F$ modulo  the   Frattini subgroup of   $\mathcal{G}_F$. 
For any $\sigma$ in $\mathcal{X}_F$ consider $P_\sigma=\{a \in \dot{F}\,|\, \sigma \big(\sqrt{a}\big)=\sqrt{a}\}$. 
By  theorem \ref{bijectionsigmapsigma} we can identify  $X_F$,the set of orderings of $F$, with  $\mathcal{X}_F$. 
Now for any involution $\sigma $  in $\mathcal{G}_F$ and  $b$ in $\dot{F}/\dot{F}^{2}$, let $sgn\,\sigma(b)=\sigma \big(\sqrt{b}\big)/\sqrt{b}$, so $sgn\,\sigma(b)\in\{\pm 1\}$ and $sgn\, \sigma$ is an element of $\chi\big(\dot{F}/\dot{F}^2\big)$. 
If sgn$P_\sigma$ be the classical signature function,
 \begin{equation*}
 \hbox{sgn} P_{\sigma}(b)=
  \begin{cases}
   1 & \text{if }  b\in P_\sigma\\
   -1 & \text{if } b\notin P_\sigma.\\
   \end{cases}
\end{equation*} 
Then $sgn\,\sigma(b)=\hbox{sgn} P_{\sigma}(b)$ for any $b$ in $ \dot{F}/\dot{F}^2$. 
On the other hand any element $\gamma$ in $\chi\big(\dot{F}/\dot{F}^2\big)$ induces  $\overline{\gamma}: F^{(2)}\rightarrow F^{(2)}$ which fixes $F$, and  $\overline{\gamma}$ is an involution in $Gal\big(F^{(2)}/F\big)$. 
We can extend $\overline{\gamma}$ to an involution in $Gal\big(F^{(3)}/F\big)$, so the set of involutions of $Gal\big(F^{(3)}/F\big)$ corresponds to  
$\chi\big(\dot{F}/\dot{F}^2\big)$.

 \begin{definition}\cite[\S 1]{L1983}
For any ordering $P \in X_F$ and form $f=\langle a_1,\dots ,a_n\rangle$, define the $P$-signature of $f$ by $\hbox{sgn}_P(f)=\sum_1^n\hbox{sgn}\, P(a_i)$. 
Two forms $f=\langle a_1,\dots ,a_n\rangle$ and $g=\langle b_1,\dots ,b_m\rangle$ are isometric if and only if  they have the same dimension and  signature with respect to all  ordering $P$ in $X_F$; see \cite[Definition 1.11]{L1983}.
 \end{definition}

\begin{theorem}\label{xFspace of oredring}
Let $F$ be  a Pythagorean formally real field, then $\big(X_F, \dot{F}/\dot{F^2}\big)$ is a space of orderings.
\end{theorem}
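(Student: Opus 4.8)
The plan is to verify the four defining conditions of Definition \ref{orderingspace} for the pair $(X_F, G)$ with $G = \dot{F}/\dot{F^2}$ and distinguished element the class of $-1$, which is nontrivial since $F$ is formally real. First I would make the identification precise: by Theorem \ref{bijectionsigmapsigma} and the surrounding discussion each ordering $P$ corresponds to the character $\mathrm{sgn}\,P \in \chi(G)$, and distinct orderings give distinct characters because $P = \{a : \mathrm{sgn}\,P(a) = 1\}$, so $X_F$ embeds in $\chi(G)$. Condition (2) is then immediate: for every ordering $P$ one has $-1 \notin P$, hence $\mathrm{sgn}\,P(-1) = -1$. Condition (3), namely $X_F^\bot = 1$, amounts to the statement that a square class positive in every ordering is trivial. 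By the Artin--Schreier characterization the totally positive elements are exactly the nonzero sums of squares, and since $F$ is Pythagorean $\sum \dot{F}^2 = \dot{F}^2$; hence $X_F^\bot$ is trivial in $G$.

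Next I would establish condition (1), that $X_F$ is closed in $\chi(G)$. Equip $\chi(G) = \mathrm{Hom}(G, \{\pm 1\})$ with the topology of pointwise convergence, under which it is a closed subspace of the compact space $\{\pm 1\}^{G}$. A character $\sigma$ lies in $X_F$ exactly when its positive set $P_\sigma = \{a \in \dot{F} : \sigma(a) = 1\}$ is additively closed and satisfies $-1 \notin P_\sigma$. The condition $\sigma(-1) = -1$ is closed, and for each pair $a,b \in \dot{F}$ with $a + b \neq 0$ the implication ``$\sigma(a) = \sigma(b) = 1 \Rightarrow \sigma(a+b) = 1$'' depends only on the values of $\sigma$ at the three classes $\bar a, \bar b, \overline{a+b}$, hence defines a clopen condition. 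Intersecting over all such pairs exhibits $X_F$ as an intersection of closed sets.

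The main obstacle is condition (4), the representation axiom, where I would first reconcile the two meanings of $D$: the value set $D_f$ of a form over $F$ in the field-theoretic sense, and the set determined abstractly by the signatures over $X_F$. For a binary form the crux is the local--global statement that $c \in D_F\langle a,b\rangle$ if and only if $\mathrm{sgn}\,P(c) = \mathrm{sgn}\,P(a)$ whenever $\mathrm{sgn}\,P(a) = \mathrm{sgn}\,P(b)$, for all $P \in X_F$. Using that a binary form representing $c$ is isometric to $\langle c, abc\rangle$, together with the fact recorded in the preliminaries that over $F$ isometry is detected by dimension and all $P$-signatures, a short signature computation yields this equivalence; by the recursive definition of $D$ it then propagates to arbitrary forms, so the abstract and field value sets coincide. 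With this identification, condition (4) becomes a direct argument on the field: writing $x = f(\mathbf{u}) + g(\mathbf{v}) \in D_{f\oplus g}$, if both summands are nonzero I set $y = f(\mathbf{u}) \in D_f$ and $z = g(\mathbf{v}) \in D_g$ and note $x = y + z \in D\langle y, z\rangle$; if one summand vanishes, say $f(\mathbf{u}) = 0$, then $x \in D_g$ and I take $z = x$ together with any $y \in D_f$ (for instance the first entry of $f$), using $x \in D\langle y, x\rangle$.

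I expect the delicate point to be this equality of the abstract and concrete value sets, since the local--global reduction rests entirely on the signature-detects-isometry property. This is precisely where the hypothesis that $F$ is Pythagorean formally real is essential, as it guarantees $\sum \dot{F}^2 = \dot{F}^2$ and hence that the reduced form theory applies; I would therefore carry out the binary-form signature computation in full before invoking the recursion.
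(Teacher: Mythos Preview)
Your proposal is correct and covers all four axioms. The overall architecture matches the paper's: verify (2) and (3) quickly via Artin--Schreier plus the Pythagorean hypothesis, and handle (4) by the direct field-theoretic splitting $x = y + z$ of a representation. Where you genuinely diverge is in the treatment of (1) and in the care you take with (4).

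For closedness, the paper does \emph{not} argue in $\chi(G)$ directly. Instead it works on the Galois side: it equips $\mathcal{G}_F = \mathrm{Gal}(F^{(3)}/F)$ with the Krull topology, shows that the Frattini subgroup $\Phi_F = \mathrm{Gal}(F^{(3)}/F^{(2)})$ is closed, passes to the quotient $W = \mathcal{G}_F/\Phi_F$, and then shows that for $[\alpha] \notin X_F$ a suitable basic open neighbourhood $\alpha\,\mathrm{Gal}(F^{(3)}/K)$ (with $K$ a finite subextension chosen to witness either $\alpha(\sqrt{-1})=\sqrt{-1}$ or a failure of additive closure of $P_\alpha$) misses $X_F$. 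Since $W$ is canonically identified with $\chi(\dot F/\dot F^2)$ via Kummer theory, this is equivalent to your argument, but yours is more elementary and self-contained: you phrase the ordering conditions as a family of clopen constraints on $\{\pm 1\}^G$ and intersect. The paper's route buys consistency with its Galois-theoretic programme; yours buys transparency and avoids setting up the Krull topology.

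For axiom (4) you are in fact more careful than the paper in two respects. First, the paper simply uses the field-theoretic $D_f$ without commenting on its relation to Marshall's abstract value sets; your plan to reconcile the two via the binary local--global principle (using the signature-detects-isometry fact recorded in the preliminaries) is the right way to close that gap, and your signature computation for $D\langle a,b\rangle$ is correct. Second, the paper writes $x = y + z$ with $y = \sum a_i f_i^2$ and $z = \sum b_j g_j^2$ but does not treat the degenerate case $y = 0$ or $z = 0$; you do. So your argument for (4) is strictly more complete than the paper's, while the underlying idea is the same.
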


\begin{proof}
By Definition \ref{orderingspace}, $\big(X_F, \dot{F}/\dot{F^2}\big)$ is a space of orderings, if it satisfies  the following properties.
    \begin{enumerate}
      \item $X_F$ is a closed subset of  the set of involutions of $        \mathcal{G}_F$.
      \item $\sigma\big(\sqrt{-1}\big)=-\sqrt{-1}$.
      \item $X_F^{\perp}=\{a \in \dot{F}/\dot{F}^{2} \, | \, \sigma\big(\sqrt{a}\big)=\sqrt{a} \, \hbox{ for  any}\, \sigma \in X \}$ 
       is the trivial preordering $\dot{F}^2$. 
      \item If $f$ and $g$ are  two forms over $\dot{F}/\dot{F^2}$ and        if $x\in D_{f\oplus g}$, then there exist $y\in D_f$ and $ z \in D_g$  such that $x\in D_{\langle y,z\rangle}$. 
   \end{enumerate}
  Property (1): Assume that $ K/F$  is any Galois extension, which may be possibly infinite.  
We recall the Krull topology on  $Gal(K/F)$ which is defined via a neighbourhood basis $\mathcal{U}(K/F)=\{Gal(K/N)\,|\,N\in \mathcal{N}\}$ at the unit element, where $\mathcal{N}$ is the set of all subfields $N$  of $K$ which contain $F$ and  are finite and  Galois over $F$.
Now we consider $K=F^{(3)}$. 
First, we will show $\Phi\big(Gal(F^{(3)}/F)\big)$, the Frattini subgroup of $Gal\big(F^{(3)}/F\big)$,  is a closed subgroup of $Gal\big(F^{(3)}/F\big)$. 
By definition of the Frattini subgroup,  $\Phi\big(Gal(F^{(3)}/F)\big)=\bigcap H_i$  for all maximal subgroup $H_i$ of $\mathcal{G}_F$. 
If $\tau \in  \Phi\big(Gal(F^{(3)}/F)\big)$ and $\tau\notin Gal\big(F^{(3)}/F^{(2)}\big)$, there exist $b\in \dot{F}$ such that $\tau(\sqrt{b})=-\sqrt{b}$, so $\tau\notin Gal\big(F^{(3)}/F(\sqrt{b})\big)$.
 But $Gal\big(F^{(3)}/F(\sqrt{b})\big)$ has index 2 in $\mathcal{G}_F$, so it  is a maximal subgroup $\mathcal{G}_F$ and this is contradiction. 
Therefore $\Phi\big(Gal(F^{(3)}/F)\big)$ contains  all $\tau\in Gal\big(F^{(3)}/F\big)$ which fix $F^{(2)}$. 
Thus  if $\gamma\notin\Phi\big(Gal(F^{(3)}/F)\big)$, then there exists $a\in \dot{F}$ such that $ \gamma\big(\sqrt{a}\big)=-\sqrt{a}$. 
Any $\sigma$  in $\gamma\cdot Gal\big(F^{(3)}/F(\sqrt{a})\big)$ has the form  $\sigma= \gamma \alpha$, $\alpha \in Gal\big(F^{(3)}/F(\sqrt{a})\big)$, so  $\sigma\big(\sqrt{a}\big)=-\sqrt{a}$. 
As $ \gamma \cdot Gal\big(F^{(3)}/F(\sqrt{a})\big)$ is an open neighborhood of $\gamma$,  we see that $\Phi\big(Gal(F^{(3)}/F)\big)$ is a closed subgroup of $Gal\big(F^{(3)}/F\big)$. 
Now we can consider the quotient topology on $W=Gal\big(F^{(3)}/F)/\Phi(Gal(F^{(3)}/F)\big)$. 
We consider $X_F$ to be a subset of $W,$ and we will show that $X_F$ is  a closed subset of $W$.\\

Let $[\alpha] $ be an element in $W\setminus X_F$ while $[\alpha]=\alpha\Phi\big(Gal(F^{(3)}/F)\big)$. 
Then either $\alpha$ in $\Phi\big(Gal(F^{(3)}/F)\big)$, or $\alpha$ is not an involution of $Gal\big(F^{(3)}/F\big)$ (this follows from Theorem \ref{fratinisugroup}). 
If  $\alpha\in\Phi\big(Gal(F^{(3)}/F)\big)$, then $\alpha\big(\sqrt{-1}\big)=\sqrt{-1}$and $\alpha Gal\big(F^{(3)}/F(\sqrt{-1})\big)$ has image in $W$ disjoint with $X_F$. 
If $\alpha$ is not involution, then by Theorem \ref{bijectionsigmapsigma},  $P_\alpha=\{a\in \dot{F}/\dot{F}^2\,|\,\, \alpha\big(\sqrt{a}\big) =\sqrt{a}\}$ is not an  ordering of $F$. 
As  $P_\alpha$ is not an ordering we see that  $P_\alpha$ is not additively closed, so  there exist  $h,q \in P_\alpha$ such that  $h+q \notin P_\alpha$, or equivalently $\alpha \big(\sqrt{h}\big) = \sqrt{h}$, $\alpha (\sqrt{q}) = \sqrt{q}$ and  $\alpha (\sqrt{q{+}h}) = \sqrt{q{+}h}$. 
Let  $H=Gal(F^{(3)}/K)$, where $K=F\big(\sqrt{h},\sqrt{q},\sqrt{h{+}q}\big)$. 
Then the action of all elements $\alpha H$ on $K$ is the same as the action of $\alpha$ on $K$. 
This means that for $\beta\in \alpha H$ and 
$P_\beta=\{a\in \dot{F}/\dot{F}^2\,|\; \beta\big(\sqrt{a}\big)=\sqrt{a}\}$, we have $h,q \in P_\beta$ but  $h+q \notin P_\beta$, so $P_\beta$  is not an ordering of $F$. 
Hence the image of the open neighborhood  $\alpha H$ of $\alpha$ in $W$ has an empty intersection with $X_F$, and $X_F$ is a  closed subset of $W$.\\

For  property (2), see \cite[Theorem 2.7]{MS1990}. 
Let $X_F = \{\sigma_1\Phi_F,\sigma_2\Phi_F,\dots,\sigma_n\Phi_F\}$ by Theorem \ref{bijectionsigmapsigma} any $\sigma_i\Phi_F$  corresponds to the ordering $P_i$ of $F$. 
Now by Artin's theorem $X^{\perp}=\bigcap_{ P_i \in X_F}P_i  =\dot{F}^2$; see \cite[Theorem 1.6]{L1983}. 
For property (4), let $f=\langle a_1,\dots,a_m\rangle$ and $ g=\langle b_1,\dots,b_n\rangle$ be two forms on $F$. Suppose   there exist  elements $f_1,\dots,f_{m+n}\in F$ such that $x=a_1f_1^2+, \dots, +a_{m+n}f_{m+n}^2$. 
Let 
 \[
 y=a_1f_1^2+, \dots, +a_n f_n^2, \qquad z=a_{n+1}f_{n+1}^2+, \dots ,+a_{m+n}f_{m+n}^2
 \]  
which means there exist $y\in D_f$ and $ z\in D_g$ such that $x\in D_{\langle y,z\rangle}$.
\end{proof}

\begin{remark}
If $\{P_{\sigma_1},P_{\sigma_2},\dots ,P_{\sigma_m}\}$ be an arbitrary subset of a space of orderings $X_F$, $Y=\{P \in X_F \,|\,P=P_{\sigma_{i_1}},\dots ,P_{\sigma_{i_k}}\}$ and $T=\bigcap P_{\sigma_i}$, $ i=1,\dots,m$. 
If  $Y$, $T$ satisfy the  duality condition $T=\bigcap P_\sigma,\, P_\sigma\in Y$ and $ Y=\{P \in X_F\,|\, T\subseteq P\}$, then $(Y,\dot{F}/\dot{T})$ is  a subspace of $\big(X_F,\dot{F}/\dot{F}^2\big)$ and there is  a Pythagorean formally real field $E$ such that $Y$  corresponds to  $X_E$; see \cite[Theorems 4.10, 2.2]{M1979}.  
\end{remark}
\subsection{Main Theorem}
  The following Theorem  corresponds to the  Basic Lemma of \cite{M1979}, which was critical  in the classification of general spaces of orderings.

\begin{theorem}
\label{maintheorem}
There does not exist any Pythagorean formally real field $F$ such that
$ \mathcal{G}_F=\langle\sigma_1,\dots,\sigma_5\, | \, \sigma_1^2,\dots, \sigma_5^2=(\sigma_1\cdots\sigma_5)^2=1,\,i=1,\dots,5\rangle$ 
and $\sigma_1,\dots,\sigma_5$ are independent involutions  mod $\Phi_F$.
\end{theorem}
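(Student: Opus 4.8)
The plan is to assume such a field $F$ exists, extract from the presentation the complete list of orderings of $F$, and then reach a contradiction through Marshall's Decomposition Theorem (Theorem \ref{DecompositionTheorem}) applied to the space of orderings $X_F$, whose space-of-orderings structure is supplied by the Galois-theoretic Theorem \ref{xFspace of oredring}.

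First I would determine $X_F$. By Theorem \ref{bijectionsigmapsigma} the orderings of $F$ correspond bijectively to the cosets $w\Phi_F$ for which $w$ is a nonsimple involution. Writing a coset representative as $w=\sigma_1^{e_1}\cdots\sigma_5^{e_5}$ with $e_i\in\{0,1\}$ and using that $\mathcal{G}_F$ lies in the category $\mathcal{C}$ (so $\Phi_F$ is central of exponent $2$ and every commutator is central), the squaring map gives $w^2=\prod_{i<j,\,e_i=e_j=1}[\sigma_j,\sigma_i]$. The defining relation $(\sigma_1\cdots\sigma_5)^2=1$ reads $\prod_{i<j}[\sigma_i,\sigma_j]=1$, and this is the only relation among the ten commutators. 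Hence $w^2=1$ forces the set of active pairs $\{(i,j):e_i=e_j=1\}$ to be either empty or all ten pairs, i.e. $w$ is an involution exactly when its support has size $1$ or $5$. This gives exactly six orderings, attached to $\sigma_1,\dots,\sigma_5$ and to $\sigma_6:=\sigma_1\cdots\sigma_5$; so $|X_F|=6$ and, in $\chi(G)$ with $G=\dot F/\dot F^2$, one has $P_6=P_1\cdots P_5$.

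Next I would record two facts about this configuration. Since the $\sigma_i$ are independent mod $\Phi_F$, we have $\dim_{\mathbb{F}_2}\chi(G)=5$; as $P_1,\dots,P_5$ form a basis of $\chi(G)$ and $P_6$ is an odd (five-fold) product of them, $\operatorname{rank}X_F=5$. On the other hand, computing all pairwise products in $\chi(G)\cong\mathbb{F}_2^5$ shows that the products $P_iP_j$ ($i<j\le 5$) are the distinct weight-two vectors and the products $P_iP_6=\prod_{m\neq i}P_m$ are the distinct weight-four vectors, so no two distinct pairs of orderings have equal product. By Definition \ref{connectedorderings} this means no two distinct orderings are simply connected; hence $X_F$ is totally disconnected, its connected components being the six singletons $\{P_1\},\dots,\{P_6\}$.

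Finally I would invoke the decomposition: by Theorem \ref{xFspace of oredring} the pair $(X_F,G)$ is a genuine space of orderings, so Theorem \ref{DecompositionTheorem} gives $\operatorname{rank}X_F=\sum_{i=1}^{6}\operatorname{rank}\{P_i\}=6$, contradicting $\operatorname{rank}X_F=5$. This contradiction shows that no such field $F$ exists. The main obstacle is the first step, pinning down $X_F$ exactly, i.e. proving that the only nonsimple involutions mod $\Phi_F$ are the six listed. This rests on the fact that, apart from the single imposed relation, the commutators $[\sigma_i,\sigma_j]$ remain independent in $\Phi_F$ (a property of the free object of $\mathcal{C}$ on five involutions), so that a support of intermediate size $2$, $3$, or $4$ never produces a trivial square. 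Everything afterward---the rank, the total disconnectedness, and the appeal to the Decomposition Theorem---is then routine.
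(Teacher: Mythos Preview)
Your argument is logically valid as written---the six involutions you identify are indeed precisely the orderings mod $\Phi_F$, the space is totally disconnected, and the rank count $5\neq 6$ does contradict Theorem~\ref{DecompositionTheorem}. However, it is circular relative to the paper's stated purpose. Marshall's proof of the Decomposition Theorem in \cite{M1979} rests on the Basic Lemma (Lemma~1.3 there): showing that connected components are subspaces with additive rank requires knowing that a five-fold product $\sigma_1\cdots\sigma_5\in X$ forces some three-fold subproduct into $X$, which is exactly the Basic Lemma. Since the paper explicitly advertises Theorem~\ref{maintheorem} as a Galois-theoretic reproof of that very lemma, your route amounts to ``this six-element configuration violates the Basic Lemma, hence cannot be a space of orderings, hence no such $F$ exists''---correct, but it simply pulls the result back out of Marshall's own machinery rather than providing the independent verification the paper promises.

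The paper's proof is genuinely different in kind. It never touches the abstract theory of connected components. Instead it works inside the field: picking a dual basis $a_1,\dots,a_5\in\dot F$ for the involutions, it writes down two $3$-dimensional forms with equal signature at every ordering (hence isometric over the Pythagorean $F$), extracts from the representation $aa_3\in D_f$ a square class $b$ with $\big(\frac{aa_5/b,\,aa_4/b}{F}\big)=1$, and then uses Lemma~\ref{existextension} to produce $D_4$-extensions. A direct analysis of fixed fields in the $D_4$ lattice (``Claim~1'') forces $b\in\{aa_4,aa_5\}$, and a second $D_4^{baa_3,\,aa_3}$-extension then makes one of the order-$2$ involutions $\sigma_4,\sigma_5$ generate a $C_4$, the contradiction. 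What this buys is exactly what the abstract promises: a proof using only the Galois theory of $F^{(3)}/F$ and quadratic-form representability, independent of Marshall's abstract lemma. Your approach buys brevity, but at the cost of that independence.
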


\begin{proof}
Since involutions $\sigma_1,\dots,\sigma_5$ are independent modulo $\Phi_F$, there exists a dual basis $\{a_1,\dots,a_5\}$ such that $\sigma_i\big(\sqrt{a_j}\big)=(-1)^{\delta_{i,j}}\big(\sqrt{a_j}\big)$.\\

Claim 1: If there exists a $D_4^{\frac{a_ia_j}{c},\frac{1}{c}}$ extension $K$ of $F$ such that $i,j\in \{1, \dots, 5\}$ and  
$c =a_1^{\epsilon_1}\dots a_5^{\epsilon_5}$, $\epsilon_i \in \{0,1\}$, 
then $c=1$ or $c=a_ia_j$. 
Indeed, if we consider the following diagram of a $D_4^{\frac{a_ia_j}{c},\frac{1}{c}}$-extension $K$ of  $F$, without loss of generality, we can assume $\frac{a_ia_j}{c}=\alpha$, $\frac{1}{c}=\beta$.  
\begin{equation*}
\xymatrix@C=1em@R=1em{
 &     &    K \ar@{-}[lld] \ar@{-}[ld] \ar@{-}[d] \ar@{-}[dr]  \ar@{-}[drr] &   &      \\
K_1 \ar@{-}[rd] & K_2 \ar@{-}[d] &    F(\sqrt\alpha,\sqrt\beta)\ar@{-}[ld] \ar@{-}[d] \ar@{-}[rd]    & K_4 \ar@{-}[d] & K_5 \ar@{-}[ld] \\
 &   F(\sqrt\alpha)\ar@{-}[rd] & F(\sqrt{\alpha\beta}) \ar@{-}[d] & F(\beta)\ar@{-}[ld] &  \\
& & F & &}
\end{equation*}
As $c =a_1^{\epsilon_1}\dots a_5^{\epsilon_5}$, $\epsilon_i \in \{0,1\}$, we need to show that $\epsilon_t=0$ for $ t\neq i,j$.  
Assume if $ t\neq i,j$, then $\epsilon_t=1 $. 
Let $\overline{\sigma_t}$ be  the restriction of $\sigma_t$ to the  $D_4^{\alpha,\beta}$-extension $K$ of $ F$, and let $K_{\sigma_t}$ be the fixed field of  $\overline{\sigma_t}$. 
We have ${\overline\sigma_t}\big(\sqrt{\beta}\big)={-}\sqrt{\beta}$ 
and $\overline{\sigma_t}\big(\sqrt{\alpha}\big)={-}\sqrt{\alpha}$, 
so $\sqrt{\alpha},\sqrt{\beta} \notin K_{\sigma_t}$. 
All  five intermediate fields of index two in the above diagram contain  $\sqrt{\alpha}$ or $ \sqrt{\beta}$. 
So $K_{\sigma_t}$ can not be any of the five intermediate fields of index two in this diagram. 
Thus it should be $F\big(\sqrt{\alpha\beta}\big)$. 
On the other hand $Gal(K/F(\sqrt{\alpha\beta}))\cong C_4$, but  $\langle\sigma_t\rangle \cong C_2$, and this is  a contradiction. 
If  $\epsilon_i=1$,  $\epsilon_j=0$ let 
$\tau=\overline{\sigma_1\sigma_2\sigma_3\sigma_4\sigma_5}$ be the restriction of $\sigma_1\sigma_2\sigma_3\sigma_4\sigma_5$ to the field $K$. 
We have $\tau\big(\sqrt{\alpha}\big)=-\sqrt{\alpha}$ and 
$\tau\big(\sqrt{\beta}\big)=-\sqrt{\beta}$. 
If $K_\tau$ is the fixed field of $\tau$, then $\sqrt{\alpha}, \sqrt{\beta}\notin K_\tau$, so $K_\tau$ can not be any of  the five intermediate fields of index two in the above diagram. 
But $\tau^2=(\sigma_1\sigma_2\sigma_3\sigma_4\sigma_5)^2 \cong C_2$,  which is   contradiction. 
In the same way  if $\epsilon_i=0$, $\epsilon_j=1$ 
we  get a contradiction, so $c=a_ia_j$ or $c=1$.\\

Let $a=a_1a_2a_3a_4a_5$, $f=\langle 1,aa_5, aa_4\rangle$, and   
$g=\langle aa_3, a_1 a_3, a_2 a_3\rangle$.  
Then
\begin{align*}
 \sigma_1(f){=}\sigma_1(1)+\sigma_1(aa_5)+\sigma_1(aa_4)=-1,\\ 
 \sigma_1(g)=\sigma_1(aa_3)+\sigma_1(a_1a_3)+\sigma_1(a_2a_3)=-1.
\end{align*} 
So  $\sigma_1(f)=\sigma_1(g)$, any ordering $\sigma$ in $X_F$ is a product of an odd number of $\sigma_1, \dots, \sigma_5$, and  it is easy to check that $\sigma(f)=\sigma(g)$. 
Therefore $\hbox{sgn}(f)=\hbox{sgn}(g) $, so $f{\cong} g$ and $ aa_3 \in D_f$, which means there exist $w,y,z \in F $ such that  $w^2+ aa_5 y^2 + aa_4 z^2= aa_3$.
So $w^2+ (aa_5 \frac{y^2}{z^2} + aa_4)z^2= aa_3$. 
Let $b=aa_5 \frac{y^2}{z^2} + aa_4$, then $\frac{aa_5}{b} \frac{y^2}{z^2} +\frac{ aa_4}{b}=1$ and $bz^2+w^2 =aa_3$. 
If $\lambda=\frac{aa_5}{b}, \gamma=\frac{aa_4}{b}$, by Theorem \ref{existextension} there exists a $D_4^{\gamma,\lambda}$-extension $K$ of $ F$. 
Let $a_i=a_4$, $a_j=a_5$ in  the diagram of claim 1, then we  consider 
$\gamma=\frac{aa_4}{b}=1/c$ and  $\lambda=\frac{aa_5}{b}\frac{aa_4\times a_4a_5}{b}=\frac{a_4a_5}{c}$. 
\begin{equation*}
    \xymatrix@C=1em@R=1em{
     &     &    K \ar@{-}[lld] \ar@{-}[ld] \ar@{-}[d] \ar@{-}[dr] \ar@{-}[drr] &   &      \\
K_1 \ar@{-}[rd] & K_2 \ar@{-}[d]&    F(\sqrt{\gamma},\sqrt\lambda)\ar@{-}[ld] \ar@{-}[d] \ar@{-}[rd]    & K_4 \ar@{-}[d] & K_5 \ar@{-}[ld] \\
 &   F(\sqrt\gamma)\ar@{-}[rd] & F(\sqrt{\gamma\lambda}) \ar@{-}[d] & F(\sqrt \lambda)\ar@{-}[ld] &  \\
& & F & &}  
\end{equation*}
But in   claim 1 we proved $c=1$ or $ c=a_4a_5$. 
If $c=1$, then $\frac{aa_4}{b}=1$ and $b=aa_4$, in the case $c=a_4a_5$, $\frac{aa_5}{b}=1$ and $b=aa_5$. 
On the other hand,  because of $bz^2+w^2=aa_3$, we have $\frac{b}{aa_3}z^2 +\frac{1}{ aa_3}w^2=1$. 
So there exists a $D_4^{baa_3,aa_3}$-extension $K$ of $F$. 
Now consider the following diagram of this extension:
\begin{equation*}
    \xymatrix@C=1em@R=1em{
     &     &    K \ar@{-}[lld] \ar@{-}[ld] \ar@{-}[d] \ar@{-}[dr] \ar@{-}[drr] &   &      \\
K_1 \ar@{-}[rd] & K_2 \ar@{-}[d]&    F(\sqrt{baa_3},\sqrt {aa_3})\ar@{-}[ld] \ar@{-}[d] \ar@{-}[rd]    & K_4 \ar@{-}[d] & K_5 \ar@{-}[ld] \\
 &   F(\sqrt{baa_3})\ar@{-}[rd] & F(\sqrt{b}) \ar@{-}[d] & F(\sqrt {aa_3})\ar@{-}[ld] &  \\
& & F & &}
\end{equation*}

If $b=aa_5$, let $\overline{\sigma_5}$  be the restriction of $\sigma_5$ to the extension $K$, and let $K_{\sigma_5}$ be the fixed field of $\overline{\sigma_5}$. 
Then $\overline{\sigma_5}\big(\sqrt{baa_3}\big)=\overline{\sigma_5}\big(\sqrt{a_3a_5}\big)=-\sqrt{a_3a_5}$ and 
$\overline{\sigma_5}\big(\sqrt{aa_3}\big)=-\sqrt{aa_3}$, so 
$\sqrt{baa_3},\sqrt{aa_3}\notin K_{\sigma_5}$.  
Therefore, $K_{\sigma_5}$ cannot be any of the  five intermediate fields of index two in this diagram. 
Therefore  $K_{\sigma_5}$  does not equal  $F\big(\sqrt{b}\big)$, but  $Gal\big(K/F(\sqrt{b})\big)\cong C_4$, whereas  $\langle\sigma_5\rangle\cong C_2$; this is contradiction. 
In the  case of  $b=a a_4$, and for $\sigma_4$ the argument is the same.  
So the  group 
\[
\langle\sigma_1,\dots,\sigma_5 \, | \,(\sigma_i)^2=(\sigma_1\cdots\sigma_5)^2=1\,\,i=1,\dots,5\rangle
\]   
cannot be the W-group of any Pythagorean  formally real field $F$.
\end{proof}

\begin{lemma}
Let $\{\sigma_1,\dots,\sigma_n\}$ be a basis for $X_F$, and let $\tau \in \mathcal{G}_F$ be such that $\sigma_i\tau\in X_F$ for $ i=1\dots n$. 
Then $\tau X_F=X_F$.
\end{lemma}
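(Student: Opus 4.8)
The plan is to recognize $\tau$ as an element of the translation group $T$ of $X_F$ from Definition~\ref{translationgroup}. Throughout I identify $\tau$ with its signature character $\operatorname{sgn}\tau\in\chi(G)$, $G=\dot F/\dot F^2$, so that the products $\sigma_i\tau$ and the translate $\tau X_F$ are literally computed in the elementary abelian $2$-group $\chi(G)$; this is the correspondence between classes of involutions and characters underlying Theorem~\ref{bijectionsigmapsigma}. In these terms the goal is to prove $X_\tau:=\tau X_F\cap X_F=X_F$, where $X_\tau$ is the subset introduced in Remark~\ref{familyofxalpha}, since $X_\tau=X_F$ is exactly the statement that $\tau$ translates $X_F$ onto itself.

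First I would rewrite the hypothesis. By definition $\sigma_i\in X_\tau$ means $\sigma_i\in X_F$ and $\sigma_i\tau\in X_F$: the former holds because $\sigma_i$ is one of the basis vectors of $X_F$, and the latter is the given assumption. Hence $\{\sigma_1,\dots,\sigma_n\}\subseteq X_\tau$. Next I would compute the perp $X_\tau^\perp=\{a\in G\,|\,\sigma(a)=1\text{ for all }\sigma\in X_\tau\}$. Since ``basis for $X_F$'' means, by the definition of rank preceding Theorem~\ref{DecompositionTheorem}, that $\{\sigma_1,\dots,\sigma_n\}$ is a basis of the character group $\chi(G)$, the inclusion just obtained forces $X_\tau^\perp\subseteq\{\sigma_1,\dots,\sigma_n\}^\perp=\{a\in G\,|\,\alpha(a)=1\text{ for all }\alpha\in\chi(G)\}$, which is trivial because the pairing between the finite group $G$ and $\chi(G)$ is perfect. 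Thus $X_\tau^\perp=1$.

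Finally I would invoke duality. By Remark~\ref{familyofxalpha} (Marshall's Lemma~4.3) the set $X_\tau$ is a subspace of $X_F$, so it satisfies the duality condition $X_\tau=(X_\tau^\perp)^\perp\cap X_F$. Since $X_\tau^\perp=1$, we get $(X_\tau^\perp)^\perp=\chi(G)$ and therefore $X_\tau=\chi(G)\cap X_F=X_F$. Unwinding $X_\tau=\tau X_F\cap X_F$, this yields $X_F\subseteq\tau X_F$; as $X_F$ is finite while $\tau$ acts as a bijection of $\chi(G)$, the two sets have equal cardinality and so $\tau X_F=X_F$, i.e.\ $\tau\in T$.

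I expect the only delicate point to be the appeal to duality: one must be certain that $X_\tau$ is genuinely a subspace, so that the relation $X_\tau=(X_\tau^\perp)^\perp\cap X_F$ is available — this is precisely the content of the lemma cited in Remark~\ref{familyofxalpha} — and one must keep the perp operation inside the correct group $G$ rather than $\chi(G)$. The remaining work, namely the identification of $\tau\in\mathcal{G}_F$ with its character $\operatorname{sgn}\tau$ and the cardinality argument at the end, is routine bookkeeping.
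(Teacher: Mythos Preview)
Your argument is correct. The paper's own proof is nothing more than a citation: it records that $(X_F,\dot F/\dot F^2)$ is a space of orderings (Theorem~\ref{xFspace of oredring}) and then invokes \cite[Lemma~4.2]{M1979}, which is precisely this statement for abstract spaces of orderings. What you have written is, in effect, a proof of that cited lemma: you pass through the subspace $X_\tau$ of Remark~\ref{familyofxalpha} (Marshall's Lemma~4.3), compute $X_\tau^\perp=1$ from the basis hypothesis, and finish with duality and a cardinality count. So the two ``proofs'' are the same in spirit; you simply unpack one layer that the paper leaves as a reference.

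One small caution, which you already flagged yourself: in Marshall's paper Lemma~4.2 precedes Lemma~4.3, so if his proof that $X_\alpha$ is a subspace happens to rely on Lemma~4.2, your argument would be citing the result it is meant to establish. Within the present paper this is harmless, since Remark~\ref{familyofxalpha} is quoted as an independent input, but if you want a proof that is self-contained relative to Marshall's development you should either check that dependency or argue directly (for instance, show first that $\tau\in\{\sigma_i\sigma_j\}$-span forces $\tau\sigma\in X_F$ for every odd product $\sigma$ of the $\sigma_i$, using property~(4) of Definition~\ref{orderingspace}).
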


\begin{proof}
This is a direct  result of  Theorem \ref{xFspace of oredring} and \cite[Lemma 4.2]{M1979}.
\end{proof}

\begin{remark}  
In the last Lemma, $\tau$ is an element of $\mathcal{G}_F$ such that 
$\sigma_i\tau\in X_F$. 
By Definition \ref{orderingspace}  part two, 
$\sigma_i\tau\big(\sqrt{-1}\big)=-\sqrt{-1}$ and  $\sigma_i\big(\sqrt{-1}\big)=-\sqrt{-1}$. 
Therefore $\tau\big(\sqrt{-1}\big)=\sqrt{-1}$  and $\tau$ is an element of $Gal\big(F^{(3)}/F(\sqrt{-1})\big)$. 
The  translation group $T$  for the general space of ordering is the set of all  $\tau $ in $\chi(G)$  such that $\tau X=X$, so for the  space of orderings $\big(X_F, \dot{F}/ \dot{F^2}\big)$, the translation group $T$ is  the set of all $\tau $ in $Gal\big(F^{(3)}/F(\sqrt{-1})\big)$  such that $\tau X_F=X_F$. 
But $Gal\big(F^{(3)}/F(\sqrt{-1})\big)$ is a subgroup of $\mathcal{G}_F$ which is generated by orderings of the field $F$  Remark \ref{generatorofwgroup}. 
So the translation group $T$ is  exactly the set of all elements $\tau$ 
of group
 \[
 Gal\big(F^{(3)}/F(\sqrt{-1})\big)=\langle \sigma_i\sigma_j\, | \, \sigma_i,\sigma_j\in X_F \rangle
 \]
which have the  property $\tau X_F=X_F$. 
In Theorem \ref{nontrivialcenter} we show that  $\tau$ is an element of  the center of $Gal\big(F^{(3)}/F(\sqrt{-1})\big)$.
\end{remark}

\begin{lemma}\label{thereexistgamma}
Let $X_\alpha = \{\sigma\in X_F\, | \, \sigma\alpha\in X_F\}$ and     
$\alpha,\beta\neq 1 $ are elements of $Gal\big(F^{(3)}/F(\sqrt{-1})\big)$. 
If $X_\alpha\cap X_\beta\neq\emptyset$ and $\hbox{rank}\,X_\alpha$ and  $\hbox{rank}\,X_\beta\geq3$, 
then  there exist $\gamma$ in $Gal\big(F^{(3)}/F(\sqrt{-1})\big)$ 
such that $X_\alpha,X_\beta\subseteq X_\gamma$.
\end{lemma}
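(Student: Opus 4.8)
The plan is to reduce the statement to Marshall's combinatorial result on the family $\mathcal{M}=\{X_\alpha\}$, recorded here as Remark~\ref{familyofxalpha}(2), and then to check that the character it produces actually lies in $Gal\big(F^{(3)}/F(\sqrt{-1})\big)$. The key enabling fact is Theorem~\ref{xFspace of oredring}, which guarantees that $\big(X_F,\dot{F}/\dot{F^2}\big)$ satisfies all four axioms of Definition~\ref{orderingspace} and is therefore a genuine space of orderings. Consequently every structural property of abstract spaces of orderings applies verbatim, with $\chi(G)=\chi\big(\dot{F}/\dot{F^2}\big)$ identified with the involutions of $Gal\big(F^{(3)}/F\big)$ through the signature map $sgn$ of Section~3.

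First I would verify that the hypotheses of Remark~\ref{familyofxalpha}(2) coincide exactly with those of the present lemma: $\alpha\neq1$, $\beta\neq1$, $X_\alpha\cap X_\beta\neq\emptyset$, and $\hbox{rank}\,X_\alpha,\hbox{rank}\,X_\beta\geq3$. Applying that statement to the space of orderings $\big(X_F,\dot{F}/\dot{F^2}\big)$ produces a character $\gamma\in\chi(G)$ with $\gamma\neq1$ and $X_\alpha,X_\beta\subseteq X_\gamma$. This already yields the containment asserted by the lemma; what remains is to place $\gamma$ inside the correct subgroup of $\mathcal{G}_F$.

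The one genuinely new step, beyond a direct citation, is to show $\gamma\big(\sqrt{-1}\big)=\sqrt{-1}$, i.e. $\gamma\in Gal\big(F^{(3)}/F(\sqrt{-1})\big)$, so that the conclusion matches the Galois-theoretic framing of this subsection. For this I would use that $X_\gamma$ is nonempty, since $X_\gamma\supseteq X_\alpha\cap X_\beta\neq\emptyset$. Picking any $\sigma\in X_\gamma$, both $\sigma$ and $\sigma\gamma$ lie in $X_F$ and hence are orderings, so each carries $\sqrt{-1}$ to $-\sqrt{-1}$ by property (2) of Theorem~\ref{xFspace of oredring}. Because $sgn$ is multiplicative, $sgn\,\gamma(-1)=sgn\,\sigma(-1)\cdot sgn(\sigma\gamma)(-1)=(-1)(-1)=1$, whence $\gamma$ fixes $\sqrt{-1}$. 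This is the same computation that, in the remark preceding the lemma, forces every translation element to lie in $Gal\big(F^{(3)}/F(\sqrt{-1})\big)$.

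I expect no serious obstacle here: the combinatorial content is carried entirely by Marshall's Lemma~4.4, imported through the equivalence with abstract spaces of orderings, and the remaining work is the short multiplicativity computation above. The only point requiring attention is the bookkeeping in the character-versus-involution dictionary, namely ensuring that the product $\sigma\gamma$ formed in $\chi(G)$ corresponds, through $sgn$, to the automorphism of $F^{(3)}$ whose action on the square roots $\sqrt{a}$ is the relevant one, which is precisely the identification established just before this lemma.
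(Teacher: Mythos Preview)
Your proposal is correct and follows essentially the same approach as the paper, which simply cites Marshall's result (there as \cite[Lemma~4.6]{M1979}) applied to the space of orderings $\big(X_F,\dot{F}/\dot{F^2}\big)$. You go one step further by explicitly checking that the resulting $\gamma$ fixes $\sqrt{-1}$ and hence lies in $Gal\big(F^{(3)}/F(\sqrt{-1})\big)$, a detail the paper's one-line proof leaves implicit.
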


\begin{proof}
This is a direct result of  \cite[Lemma 4.6]{M1979} for  $\big(X_F,\dot{F}/ \dot{F^2}\big)$.
\end{proof}

\begin{theorem}\label{existtau}
If $X_F$ is  a connected  space of orderings  of a field $F$ and rank $X_F\neq1$, then there exists $\tau\in Gal\big(F^{(3)}/F(\sqrt{-1})\big)$, $\tau \neq1$ such that $ \tau X_F=X_F$.
\end{theorem}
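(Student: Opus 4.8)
The plan is to reduce the statement to Marshall's abstract theorem \cite{M1979}, that a connected space of orderings of rank $\neq 1$ carries a nontrivial $\alpha\in\chi(G)$ with $\alpha X=X$, and then to transport that character into the Galois group. First I would invoke Theorem \ref{xFspace of oredring}, so that $\big(X_F,\dot{F}/\dot{F^2}\big)$ is a genuine space of orderings; since it is connected of rank $\neq 1$ by hypothesis, Marshall's theorem produces $\alpha\in\chi\big(\dot{F}/\dot{F^2}\big)$, $\alpha\neq1$, with $\alpha X_F=X_F$.

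The second step is to read $\alpha$ as an automorphism. As observed in the construction preceding Theorem \ref{xFspace of oredring}, every $\alpha\in\chi\big(\dot{F}/\dot{F^2}\big)$ induces an involution $\overline{\alpha}\in Gal\big(F^{(2)}/F\big)$ with $\overline{\alpha}\big(\sqrt{a}\big)=\alpha(a)\sqrt{a}$, and this lifts to some $\tau\in Gal\big(F^{(3)}/F\big)$. The point to verify is that $\tau$ fixes $\sqrt{-1}$: since every $\sigma\in X_F$ satisfies $\sigma(-1)=-1$ (property (2) of Definition \ref{orderingspace}) and $\alpha\sigma$ again lies in $X_F$, evaluating at $-1$ gives $\alpha(-1)\cdot(-1)=-1$, hence $\alpha(-1)=1$ and $\tau\big(\sqrt{-1}\big)=\sqrt{-1}$. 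Thus $\tau\in Gal\big(F^{(3)}/F(\sqrt{-1})\big)$. Because the action of $\tau$ on the cosets $\sigma\Phi_F$ factors through its image in $\mathcal{G}_F/\Phi_F\cong Gal\big(F^{(2)}/F\big)\cong\chi\big(\dot{F}/\dot{F^2}\big)$, which is exactly $\alpha$, the identity $\alpha X_F=X_F$ is literally $\tau X_F=X_F$; and $\tau\neq1$ because $\alpha\neq1$.

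The genuine content sits in Marshall's theorem, so along this route the only work is the dictionary between $\chi\big(\dot{F}/\dot{F^2}\big)$ and $Gal\big(F^{(2)}/F\big)$ together with the observation that a translating character must annihilate $-1$. If instead one wants a proof staying inside the Galois picture, one can rerun Marshall's combinatorial argument using Lemma \ref{thereexistgamma} and the properties of the family $\{X_\alpha\}$ in Remark \ref{familyofxalpha}: connectedness and rank $\neq1$ supply a simple connection $\sigma\backsim_s\sigma'$ with $\sigma\neq\sigma'$ (Definition \ref{connectedorderings}), whence there are orderings $\rho,\rho'$ with $\sigma\sigma'=\rho\rho'$, so $\alpha=\sigma\sigma'\neq1$ satisfies $\{\sigma,\sigma',\rho,\rho'\}\subseteq X_\alpha$ and $X_\alpha$ is nontrivial; one then amalgamates the members of $\{X_\alpha\}$ of rank $\geq 3$, via Lemma \ref{thereexistgamma}, into a single $X_\gamma$ of full rank, giving $\gamma X_F=X_F$. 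The main obstacle in this second route is precisely showing that connectedness forces the maximal members of $\{X_\alpha\}$ to fill out all of $X_F$ (in particular, upgrading ``several elements'' to ``rank $\geq 3$''); citing Marshall's theorem is what lets us sidestep that difficulty.
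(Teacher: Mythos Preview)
Your proposal is correct and matches the paper's approach: the paper's proof is the single sentence ``Apply \cite[Theorem 4.7]{M1979} to the space of orderings $\big(X_F, \dot{F}/\dot{F^2}\big)$,'' which is precisely your first route, with the dictionary and the verification $\alpha(-1)=1$ left implicit there (the paper relegates that observation to the surrounding remarks). Your second, self-contained sketch via Lemma~\ref{thereexistgamma} is additional material not present in the paper.
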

\begin{proof}
Apply  \cite[Theorem 4.7]{M1979} to the space of orderings $\big(X_F, \dot{F}/ \dot{F^2}\big)$.
\end{proof}

\begin{theorem}\label{nontrivialcenter}
Let $X_F$ be a connected space of orderings of a  Pythagorean formally real  field $F$. 
Then $Gal\big(F^{(3)}/F(\sqrt{-1})\big)$ has a nontrivial center.
\end{theorem}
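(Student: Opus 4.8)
The plan is to build the required central element directly out of the translation group, using Theorem \ref{existtau} together with the exponent structure of the category $\mathcal{C}$. We may assume $\hbox{rank}\,X_F\neq 1$; in the degenerate rank-one case $F$ has a unique ordering, $\dot F/\dot F^2=\{1,-1\}$, $F^{(3)}=F(\sqrt{-1})$, and $Gal\big(F^{(3)}/F(\sqrt{-1})\big)$ is trivial, so there is nothing to prove. Since $X_F$ is connected with $\hbox{rank}\,X_F\neq1$, Theorem \ref{existtau} produces an element $\tau\in Gal\big(F^{(3)}/F(\sqrt{-1})\big)$ with $\tau\neq1$ and $\tau X_F=X_F$; equivalently $\tau$ lies in the translation group $T$. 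The whole theorem then reduces to the single claim that $\tau\in Z\big(Gal(F^{(3)}/F(\sqrt{-1}))\big)$, exactly as anticipated in the remark preceding Lemma \ref{thereexistgamma}.

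The heart of the argument is a uniform commutator identity. Recall that $\mathcal{G}_F$ lies in $\mathcal{C}$, so $g^4=1$ and $g^2\in Z(\mathcal{G}_F)$ for all $g$, every commutator is central, and by Theorem \ref{fratinisugroup} and Remark \ref{generatorofwgroup} the subgroup $\Phi_F=[\mathcal{G}_F,\mathcal{G}_F]$ is a central elementary abelian $2$-group; in particular $\phi^2=1$ for all $\phi\in\Phi_F$ and $\tau^4=1$. Fix an ordering involution $\sigma_i\in X_F$. Modulo $\Phi_F$ the group is abelian and the coset $\sigma_i\tau\Phi_F$ corresponds to $\hbox{sgn}\,\sigma_i\cdot\hbox{sgn}\,\tau$, which lies in $X_F$ because $\tau X_F=X_F$; hence by Theorem \ref{bijectionsigmapsigma} this coset contains an involution. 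Since $\Phi_F$ is central of exponent $2$, the square of an element depends only on its coset mod $\Phi_F$: for $\phi\in\Phi_F$ we have $(\sigma_i\tau\phi)^2=(\sigma_i\tau)^2\phi^2=(\sigma_i\tau)^2$. As the coset contains an involution, $(\sigma_i\tau)^2=1$. Expanding with $xy=yx[x,y]$ and using that $[\sigma_i,\tau]$ is central and $\sigma_i^2=1$ gives
\[
(\sigma_i\tau)^2=\sigma_i\tau\sigma_i\tau=\sigma_i^2\tau^2[\sigma_i,\tau]=\tau^2[\sigma_i,\tau].
\]
Therefore $[\sigma_i,\tau]=\tau^2$ for \emph{every} ordering $\sigma_i\in X_F$.

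With this identity the conclusion follows by passing to products of pairs. By the remark before Lemma \ref{thereexistgamma}, $Gal\big(F^{(3)}/F(\sqrt{-1})\big)=\langle\sigma_i\sigma_j\mid\sigma_i,\sigma_j\in X_F\rangle$ is generated by products of two orderings. Because commutators are central, $x\mapsto[\tau,x]$ is multiplicative, so for any two orderings $\sigma_i,\sigma_j$,
\[
[\tau,\sigma_i\sigma_j]=[\tau,\sigma_i]\,[\tau,\sigma_j]=\tau^2\cdot\tau^2=\tau^4=1,
\]
where $[\tau,\sigma_i]=[\sigma_i,\tau]^{-1}=\tau^2$ (commutators have order at most two). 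Thus $\tau$ commutes with every generator, so $\tau\in Z\big(Gal(F^{(3)}/F(\sqrt{-1}))\big)$, and $\tau\neq1$ yields the nontrivial center.

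The step I expect to be the main obstacle is the passage from ``$\sigma_i\tau$ is an ordering modulo $\Phi_F$'' to the genuine relation $(\sigma_i\tau)^2=1$: the translation property only places $\sigma_i\tau$ in an ordering coset, and a priori $\sigma_i\tau$ itself need not be an involution. The decisive observation is that squares are constant on cosets of the central exponent-two subgroup $\Phi_F$, which upgrades the coset information to the exact identity $[\sigma_i,\tau]=\tau^2$. Once this holds uniformly over all orderings, the reduction to even-length products $\sigma_i\sigma_j$ is precisely what makes $\tau$ central in the index-two subgroup $Gal\big(F^{(3)}/F(\sqrt{-1})\big)$ even in the case $\tau^2\neq1$, where $\tau$ fails to be central in $\mathcal{G}_F$ itself.
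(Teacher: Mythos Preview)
Your argument is correct and follows the same route as the paper: obtain $\tau\neq1$ with $\tau X_F=X_F$ from Theorem~\ref{existtau}, use that each $\tau\sigma_i$ lies in an ordering coset---hence is itself an involution since $\Phi_F$ is central of exponent two---and conclude that $\tau$ commutes with every generator $\sigma_i\sigma_j$ of $Gal\big(F^{(3)}/F(\sqrt{-1})\big)$. Your packaging via the uniform identity $[\sigma_i,\tau]=\tau^2$ together with the explicit observation that squares are constant on $\Phi_F$-cosets is a cleaner version of the paper's direct manipulation, which uses the same facts implicitly; the only quibble is your rank-one aside, where the group is trivial and hence has \emph{trivial} center, so that degenerate case should be excluded from the hypothesis rather than called ``nothing to prove'' (the paper's own proof silently skips this boundary case as well).
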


\begin{proof}
Let $|X_F|=n$. By Theorem  \ref{existtau} there exists an element 
$\tau=\sigma_1\sigma_2$ in the group $Gal\big(F^{(3)}/F(\sqrt{-1})\big)$ 
such that $ \tau X_F=X_F$. 
Therefore for any $\sigma_i\in X_F$, $\tau\sigma_i=\sigma_{k(i)},\, k(i)\in\{1,\dots,n\}$. 
So there exist $m,n$ such that $\tau\sigma_i=\sigma_m$ and  $\tau\sigma_j=\sigma_n$. 
Therefore 
\[
\sigma_1\sigma_2\sigma_i=\sigma_m\Longrightarrow \sigma_i=\sigma_i^{-1}=\sigma_2\sigma_1\sigma_m\qquad
\sigma_1\sigma_2\sigma_j=\sigma_n \Longrightarrow \sigma_j=\sigma_j^{-1}=\sigma_2\sigma_1\sigma_n. 
 \] 
For any element $\gamma=\sigma_i\sigma_j$ in $Gal(F^{(3)}/F(\sqrt{-1}))$ we would have
\begin {align*}
\tau\sigma_i\sigma_j\tau^{-1}\sigma_j\sigma_i 
&=\sigma_m\sigma_n\sigma_j\sigma_i=\sigma_m\sigma_n\sigma_2\sigma_1\sigma_n\sigma_2\sigma_1\sigma_m=\sigma_m\sigma_n(\sigma_2\sigma_1\sigma_n)^2\sigma_n\sigma_m\\
(\sigma_j =\sigma_2\sigma_1\sigma_n)^{2}&=1
\Longrightarrow \sigma_m\sigma_n(\sigma_2\sigma_1\sigma_n)^2\sigma_n\sigma_m =1
\Longrightarrow\tau\sigma_i\sigma_j=\sigma_i\sigma_j\tau.
\end{align*}
So $\tau$ is an element of the  center of $Gal\big(F^{(3)}/F(\sqrt{-1})\big)$.
\end{proof}

The next theorem is a generalization of \cite[Theorem 3.5]{MS}.

\begin{theorem}\label{Connected case}
If $F$  is a Pythagorean formally real field with  connected space of  ordering $X_F$, then $\mathcal{G}_F\cong\prod C_{4}\rtimes\mathcal{G}_{\overline{F}}$, where $\overline{F}$ is also a  Pythagorean formally real  field with a smaller number of orderings and a disconnected space of  orderings $ X_{\overline{F}}$. 
For a suitable set of generators $\sigma_i, \sigma_{-1}$, the group  
$\mathcal{G}_{\overline{F}}$ acts on $\prod C_{4}$ by $\sigma_i^{-1} \tau\sigma_i=\tau $ and $\sigma_{-1}^{-1} \tau\sigma_{-1}=\tau ^3$ where, $\tau \in \prod C_{4}$.
\end{theorem}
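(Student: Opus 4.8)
The plan is to obtain the semidirect product from the general structure theorem and then to read off the four assertions about $\overline{F}$ from the hypothesis that $X_F$ is connected. First I would invoke Theorem \ref{deltaJ}, which already gives $\mathcal{G}_F \cong \Delta_J \rtimes \mathcal{G}_K = (\prod_J \mathbb{Z}/4\mathbb{Z}) \rtimes \mathcal{G}_K$ together with exactly the prescribed action $\sigma_i^{-1}\tau\sigma_i = \tau$ and $\sigma_{-1}^{-1}\tau\sigma_{-1} = \tau^3$; by Remark \ref{fieldK}, since $F$ is Pythagorean formally real the factor $\mathcal{G}_K$ is the W-group $\mathcal{G}_{\overline{F}}$ of the residue field $\overline{F}$ of a $2$-Henselian valuation $\nu$ on $F$. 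Thus the shape $\prod C_4 \rtimes \mathcal{G}_{\overline{F}}$ and the action are immediate, and what remains is to verify that the $\prod C_4$ factor is nontrivial and that $\overline{F}$ is a Pythagorean formally real field with strictly fewer, and disconnected, orderings.

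For nontriviality of $\prod C_4 = \Delta_J$ I would exploit connectedness through the following chain. Writing $H = Gal(F^{(3)}/F(\sqrt{-1}))$, Theorem \ref{Z(H)} identifies $\Delta_J$ with the center $Z(H)$ and gives $\Delta_J \cong \prod_J \mathbb{Z}/4\mathbb{Z}$ with each $\sigma_j$ of order four. Since $X_F$ is connected of rank $\neq 1$, Theorem \ref{existtau} produces $\tau \neq 1$ in $H$ with $\tau X_F = X_F$, and Theorem \ref{nontrivialcenter} shows that any such translation lies in $Z(H)$. Hence $Z(H) = \Delta_J \neq 1$, so $J \neq \emptyset$ and the factor $\prod C_4$ is a genuine contribution rather than the trivial group.

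That $\overline{F}$ is again Pythagorean formally real follows from the valuation $\nu$ of Remark \ref{fieldK} together with Theorem \ref{residuefield}. For the drop in the number of orderings I would pass to Marshall's reduced space: taking $T$ to be the full translation group of $X_F$ and $G' = T^\perp$, Theorem \ref{spaceX'} makes $(X',G')$ a space of orderings, and the identification $\mathcal{G}_K \cong \mathcal{G}_{\overline{F}}$ together with the bijection of Theorem \ref{bijectionsigmapsigma} identifies $X'$ with $X_{\overline{F}}$. Because $T$ acts freely inside $\chi(G)$ and $T X_F = X_F$, the fibers of the restriction $X_F \to X'$ are full $T$-orbits, so $|X_F| = |T|\cdot|X_{\overline{F}}|$; as $|T| \geq 2$ by the previous paragraph, $\overline{F}$ has strictly fewer orderings than $F$.

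The main obstacle is the disconnectedness of $X_{\overline{F}}$, and the key point is that reducing by the \emph{full} translation group leaves nothing left to translate. Since $G' = T^\perp$ and double annihilation gives $(G')^\perp = T$, the restriction $\chi(G) \to \chi(G')$ has kernel exactly $T$. If some $\alpha' \in \chi(G')$ satisfied $\alpha' X' = X'$, then lifting $\alpha'$ to $\alpha \in \chi(G)$ and using $T X_F = X_F$ would force $\alpha X_F \subseteq X_F$, hence $\alpha \in T$ and $\alpha' = 1$; so the translation group of $X_{\overline{F}}$ is trivial. By the result preceding Definition \ref{translationgroup}, a connected space of rank $\neq 1$ has a nontrivial translation, so a space with trivial translation group and rank $\geq 2$ must be disconnected, which is the assertion. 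The only degenerate case to dispose of separately is $\hbox{rank}\,X_{\overline{F}} \leq 1$ (for instance when $X_F$ is a fan), where $\mathcal{G}_{\overline{F}}$ collapses to the W-group of a single-ordering field and the reduction simply terminates; I expect checking that this edge case is handled honestly, rather than the triviality of the residual translation group itself, to be the delicate part of the write-up.
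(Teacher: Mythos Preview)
Your proposal is correct and follows essentially the same route as the paper: both arguments combine Theorem~\ref{Z(H)} (identifying $\Delta_J$ with $Z(H)$), Theorem~\ref{nontrivialcenter} (connectedness forces $Z(H)\neq 1$), Theorem~\ref{deltaJ} with Remark~\ref{fieldK} (the semidirect product with the stated action and the identification $K=\overline{F}$), and Theorem~\ref{spaceX'} (the residual space). The only organizational difference is that the paper first reconstructs the semidirect product via the fixed field $E$ of $Z(H)$ and the fundamental theorem of Galois theory before matching it with Theorem~\ref{deltaJ}, whereas you invoke Theorem~\ref{deltaJ} immediately; and for disconnectedness the paper simply cites \cite[Remark~1]{M1979}, while you supply the lifting argument showing the residual translation group is trivial---your version is more self-contained here, and your flagging of the rank-one edge case (the fan situation) is a point the paper leaves implicit.
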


\begin{proof}
We assume $X_F$ is a connected space, so by Theorem \ref{nontrivialcenter}, $Gal \big( F^{(3)}/F(\sqrt{-1})\big)$ has nontrivial center. 
Any element $\gamma=\sigma_i\sigma_j$ in $Gal \big(F^{(3)}/F(\sqrt{-1}) \big)$, such that $\sigma_i$, $\sigma_j$ are elements  of the basis $\{\sigma_1,\dots,\sigma_n\}$, has order four. 
This is because of $\sigma_i$, $\sigma_j$ are independent involutions  mod $ \Phi_F$, so $\sigma_i\sigma_j\notin \Phi_F$. 
Indeed if $(\sigma_i\sigma_j)^2=1$, then $\sigma_i\sigma_j$ is a non simple involution and $\sigma_i\sigma_j\big(\sqrt{-1}\big)=-\sqrt{-1}$, 
but $\sigma_i\sigma_j\big(\sqrt{-1}\big)=\sigma_i\big(\sigma_j\sqrt{-1}\big)=\sigma_i\big(-\sqrt{-1}\big)=\sqrt{-1}$ which is a contradiction. 
So $\gamma=\sigma_i\sigma_j$ has order four, and the center of 
$Gal \big(F^{(3)}/F(\sqrt{-1})\big)$ is isomorphic to $\prod_1^m C_{4}$. 
Let $H=\{\sigma\in \mathcal{G}_F\, |\,\sigma\big(\sqrt{-1}\big)=\sqrt{-1}\}$. 
By Theorem \ref{Z(H)} 
\[
Z(H)=\{\sigma \in \mathcal{G}_F\,|\, \sigma\big(\sqrt{b}\big)=\sqrt{b}\,\, \forall b \in \hbox{Bas}(F)\}=\Delta_J.
\] 
But $H=Gal \big(F^{(3)}/F(\sqrt{-1}) \big)$, so the center of $Gal \big(F^{(3)}/F(\sqrt{-1}) \big)$ that was nontrivial and isomorphic to  a product of copies of $C_4$  is equal to the set 
\[  
 \{\sigma \in \mathcal{G}_F\,|\, \sigma(\sqrt{b})=\sqrt{b}\,\, \forall b \in \hbox{Bas}(F)\}=\Delta_J.
 \]  
Now let $E= F \big(\sqrt{a_i}\, | \, a_i \in Bas(F) \big)$ be  the fixed field of $Z(H)$ and $X^{'}=\{\sigma\vert_E\, |\,  \sigma\in X_F\}$. 
By Theorem \ref{spaceX'}, $\big(X^{'},\dot{E}/\dot{E^2}\big)$ is a space of orderings. 
Now, by Theorem \ref{existF}, there exists a  Pythagorean formally real field  $L$ such that $\big(X^{'},\dot{E}/\dot{E^2}\big){\sim} \big(X_L,\dot{L}/\dot{L^2}\big)$. 
So $\mathcal{G}_L$,  the  W-group of the Pythagorean  real field $L$, is  generated  by $X_L=X^{'}$. 
The fundamental  theorem of Galois theory implies $\mathcal{G}_F\cong\Delta_J\rtimes Gal(E/F)$. 
But $Gal(E/F)$ is a subgroup of $\mathcal{G}_F$ which is generated by $\{\sigma\vert_E\, |\,  \sigma\in X_F\}=X^{'}$, 
so $\mathcal{G}_F\cong\Delta_J\rtimes \mathcal{G}_L$. 
~On the other hand, by Theorem \ref{deltaJ} and Remark \ref{fieldK}, 
\[
\mathcal{G}_F\cong\Delta_J\rtimes \mathcal{G}_{\overline{F}}\cong\prod C_{4}\rtimes \mathcal{G}_{\overline{F}}
\]
 where $\overline{F}$ is the residue field of some 2-Henselian  valuation on $F$. 
But $Z(H)$ where 
\[
H=Gal \big(F^{(3)}/F(\sqrt{-1})\big)=\langle \sigma_i\sigma_j\,|\, \sigma_i,\sigma_j{\in} X_F\rangle
\] 
is nontrivial. 
If $\tau =\sigma_i\sigma_j$ is an element of $Z(H)$, as $E$ is the  fixed field of $Z(H)$, $\sigma_i\vert_E=\sigma_j\vert_E $ which implies the  rank $X_{\overline{F}}$ is less than the rank $X_F$. 
Also $X_{\overline{F}}$ is  disconnected space of orderings; see  \cite[Remark 1]{M1979}.
 \end{proof}

\begin{remark} 
 Note that in the connected  case, any $\sigma$ in $X_F $ can be written uniquely as a product $t\sigma'$ that $t\in T$, $ \sigma'\in X_{\overline{F}}$. 
Then $|X_F| = |T||X_{\overline{F}}|$ and  $|T| = 2^m$ where $m$ is
the dimension of $T$ as $\mathbb{F}_2$ vector space. 
In Theorem \ref{disconnectedcase} we show that if  the space of orderings $X_F$ is disconnected,  $\mathcal{G}_F$ is equal to the free product of $\mathcal{G}_{F_i}$ in the category $\mathcal{C}$. 
The group  $\mathcal{G}_{F_i}$ is a W-group of  some Pythagorean field $F_i$, with a connected space of orderings $X_{F_i}$.
Recursively, one can apply   Theorem \ref{Connected case} to the Pythagorean fields $F_i$ until the structure of the W-group $\mathcal{G}_F$ is completely determined. 
\end{remark}

\begin{lemma}\cite[\S.2]{Mi1986}\label{G2}
Let $F$ be a Pythagorean formally real field with  space of orderings 
$X_F=\{\sigma_1,\dots,\sigma_n\, |\, \sigma_i^2=1, \, \sigma_i\notin \Phi_F\}$, such that $X_F=X_1\oplus X_2 \oplus\dots\oplus X_k$, where  
$X_i$ are the connected components of $X_F$. 
Then $G_F=G_{F_1}\ast\dots\ast G_{F_k}.$
\end{lemma}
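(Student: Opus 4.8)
The plan is to convert the decomposition of the space of orderings into connected components into a product decomposition of the associated Witt-theoretic data, and then to invoke the correspondence between products of Witt rings and free products of the (pro-$2$) Galois groups.

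First I would record the structural input already available. By Theorem~\ref{DecompositionTheorem} each connected component $X_i$ is itself a subspace of $X_F$ and $\mathrm{rank}\,X_F=\sum_{i=1}^{k}\mathrm{rank}\,X_i$. By Theorem~\ref{existF} each $(X_i,G_i)$ is equivalent to $(X_{F_i},G_{F_i})$ for a Pythagorean formally real field $F_i$; these are the fields appearing in the statement. The involutions generating $\mathcal{G}_{F_i}$ are, via Theorem~\ref{bijectionsigmapsigma} and Remark~\ref{generatorofwgroup}, precisely the orderings lying in the component $X_i$.

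The central step is to show that the direct-sum decomposition $X_F=X_1\oplus\dots\oplus X_k$ of spaces of orderings is matched by a product decomposition of the corresponding Witt rings, $W(F)\cong W(F_1)\times\dots\times W(F_k)$ in the sense relevant to this category. The key point is that no ordering of $X_i$ is simply connected to an ordering of $X_j$ for $i\neq j$ (Definition~\ref{connectedorderings}); consequently the isometry relations between forms, which are governed by signatures at the orderings, split along the components, and the Witt ring factors accordingly, with the distinguished element $-1$ sitting diagonally. I would then apply the correspondence recorded after the definition of the Witt ring---that a product of Witt rings corresponds to a free product of W-groups in the appropriate category; see \cite{MS}---to obtain the free product of Galois groups. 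Working first with the pro-$2$ group $G_F=\mathrm{Gal}(F(2)/F)$ gives $G_F=G_{F_1}\ast\dots\ast G_{F_k}$ as a free pro-$2$ product; passing to the quotient by $G_F^{(3)}$ through Lemma~\ref{G_F}, and using that this quotient of a free product is the free product in the category $\mathcal{C}$ of Lemma~\ref{categoryC}, yields the W-group statement $\mathcal{G}_F\cong\mathcal{G}_{F_1}\ast\dots\ast\mathcal{G}_{F_k}$.

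The step I expect to be the main obstacle is this middle one: verifying precisely that the connected-component decomposition yields a genuine product decomposition of the Witt rings with the correct handling of the basepoint $-1$ and of the dimension and signature congruences, rather than merely a partition of the underlying set of characters. This is what guarantees that the free product is formed in the right category---so that the defining relations $g^4=1$ and $g^2\in Z(G)$ of Lemma~\ref{categoryC} are preserved and no spurious cross-relations between the factors survive. As an alternative, purely Galois-theoretic route, one could instead set $\mathcal{G}_i=\langle\sigma_P: P\in X_i\rangle$, show these subgroups generate $\mathcal{G}_F$ by Theorem~\ref{fratinisugroup}, and verify the universal property of the coproduct directly by using the machinery of the subspaces $X_\alpha$ of Remark~\ref{familyofxalpha} to rule out relations crossing distinct components; the difficulty is then concentrated in establishing that absence of cross-relations.
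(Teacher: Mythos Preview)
The paper does not actually prove this lemma; it is quoted from \cite{Mi1986} with no argument given, and is then used as a black box in the proof of Theorem~\ref{disconnectedcase}. So there is no ``paper's own proof'' to compare against beyond the citation.

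That said, your proposal has a genuine gap relative to the statement as written. The correspondence you invoke from \cite{MS}---products of Witt rings correspond to free products of W-groups---operates at the level of $\mathcal{G}_F = G_F/G_F^{(3)}$, not at the level of $G_F = \mathrm{Gal}(F(2)/F)$. The Witt ring of $F$ determines $\mathcal{G}_F$ (this is the Min\'a\v{c}--Spira theorem \cite{MS1996}) but it does \emph{not} determine $G_F$; distinct pro-$2$ absolute Galois groups can sit over the same Witt ring. So the sentence ``Working first with the pro-$2$ group $G_F$ gives $G_F = G_{F_1}\ast\dots\ast G_{F_k}$'' is exactly the step that does not follow from your Witt-theoretic input. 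You have the logical flow inverted: your argument legitimately yields $\mathcal{G}_F \cong \mathcal{G}_{F_1}\ast\dots\ast\mathcal{G}_{F_k}$ in $\mathcal{C}$, and then you cannot lift this back up to $G_F$.

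What you have actually sketched, then, is an alternative direct proof of Theorem~\ref{disconnectedcase} that bypasses Lemma~\ref{G2} entirely---going straight from the sum decomposition of $X_F$ to the product of Witt rings to the free product of W-groups. That is a perfectly reasonable route to the result the paper ultimately wants, and arguably cleaner than the paper's detour through $G_F$. But it does not establish the stronger assertion about $G_F$ that Lemma~\ref{G2} records; for that one really needs the argument of \cite{Mi1986}, which works with the full $2$-extension $F(2)$ and the involutions of $G_F$ coming from orderings, not just with the quotient $\mathcal{G}_F$.
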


\begin{theorem}\label{disconnectedcase}
If $F$  is a Pythagorean formally real field,  
\[
X_F=\{\sigma_1,\dots,\sigma_n\, |\, \sigma_i^2=1, \,\sigma_i\notin \Phi_F\}\qquad X_F=\bigcup_{i=1}^k \big(X_i,\dot{F}/\dot{F}^2\big).
\] 
The $X_i$ are  the connected components of $X_F$ and the $\mathcal{G}_{F_i}$ are the W-groups corresponding to the  fields $F_i$. 
Then $\mathcal{G}_F\cong \mathcal{G}_{F_1}\ast\dots\ast \mathcal{G}_{F_k}$.  
\end{theorem}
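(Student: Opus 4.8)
The plan is to deduce the statement from the corresponding decomposition of the \emph{absolute} Galois group and then transport it across the small-quotient functor $G \mapsto G/G^{(3)}$. Lemma \ref{G2} already provides $G_F \cong G_{F_1} \ast \cdots \ast G_{F_k}$, the free (pro-$2$) product of the absolute Galois groups of the Pythagorean fields $F_i$ realizing the connected components $X_i$; this free product is exactly the coproduct in the category of pro-$2$-groups. Since Lemma \ref{G_F} identifies $\mathcal{G}_F$ with $G_F/G_F^{(3)}$ (and likewise $\mathcal{G}_{F_i} \cong G_{F_i}/G_{F_i}^{(3)}$), it suffices to show that the assignment $R\colon G \mapsto G/G^{(3)}$ carries the free product of pro-$2$-groups to the free product in $\mathcal{C}$.

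First I would set up $R$ as a reflector. By Remark \ref{generatorofwgroup} and Lemma \ref{categoryC}, $\mathcal{C}$ is the full subcategory of pro-$2$-groups $A$ with $A^{(3)} = \{1\}$, where $A^{(3)} = A^4[A^2,A]$. The construction $(-)^{(3)}$ is functorial: any continuous homomorphism $\phi\colon G \to H$ sends fourth powers to fourth powers and commutators to commutators, so $\phi(G^{(3)}) \subseteq H^{(3)}$. In particular, if $A \in \mathcal{C}$ then every continuous $\phi\colon G \to A$ satisfies $\phi(G^{(3)}) \subseteq A^{(3)} = \{1\}$, hence factors uniquely through the quotient map $G \twoheadrightarrow G/G^{(3)}$. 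This is precisely the statement that $R$ is left adjoint to the inclusion $\mathcal{C} \hookrightarrow \{\text{pro-}2\text{-groups}\}$, i.e. that $\mathcal{C}$ is a reflective subcategory with reflector $R$ and $R(G) = G/G^{(3)}$.

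With the adjunction in hand, the conclusion is formal: left adjoints preserve colimits, and coproducts are colimits, so $R$ sends the pro-$2$ coproduct to the coproduct in $\mathcal{C}$. The coproduct in pro-$2$-groups is the free product appearing in Lemma \ref{G2}, while the coproduct in $\mathcal{C}$ is exactly the free product $\ast$ of W-groups referred to near \cite{MS}. Therefore
\[
\mathcal{G}_F \;\cong\; R(G_F) \;\cong\; R\big(G_{F_1} \ast \cdots \ast G_{F_k}\big) \;\cong\; R(G_{F_1}) \ast \cdots \ast R(G_{F_k}) \;\cong\; \mathcal{G}_{F_1} \ast \cdots \ast \mathcal{G}_{F_k},
\]
where the last step uses $\mathcal{G}_{F_i} \cong R(G_{F_i})$ from Lemma \ref{G_F}. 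A rank count via Theorem \ref{DecompositionTheorem} furnishes a consistency check, since the free product in $\mathcal{C}$ has the expected number of generating involutions $n = \sum_i |X_i|$.

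The main obstacle I expect is not the algebra but pinning down the two categorical identifications rigorously: that the free product of Lemma \ref{G2} is genuinely the categorical coproduct of pro-$2$-groups (so that $R$ may legitimately be applied), and that the coproduct in $\mathcal{C}$ coincides with the free product $\ast$ used in the statement. If one prefers to avoid the abstract-nonsense route, the same argument can be made concrete by verifying directly that $G_F/G_F^{(3)}$ satisfies the universal mapping property of $\mathcal{G}_{F_1} \ast \cdots \ast \mathcal{G}_{F_k}$ in $\mathcal{C}$: every compatible family of maps $\mathcal{G}_{F_i} \to A$ with $A \in \mathcal{C}$ assembles, via Lemma \ref{G2} and the universal property of the pro-$2$ free product, into a map $G_F \to A$, which then factors through $\mathcal{G}_F = G_F/G_F^{(3)}$ by the reflector property; here care is needed only to confirm continuity and uniqueness of the induced map.
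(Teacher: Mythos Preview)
Your argument is correct and follows essentially the same route as the paper: invoke Lemma~\ref{G2} for the free-product decomposition of $G_F$, then apply Lemma~\ref{G_F} and the fact that the functor $G\mapsto G/G^{(3)}$ carries free products of pro-$2$-groups to free products in $\mathcal{C}$. The only difference is that the paper dispatches this last step in a single sentence (``by observing that free products in the category of pro-$2$-groups are mapped to free products in the category $\mathcal{C}$''), whereas you supply the categorical justification via the reflector/left-adjoint argument; your version is thus a fleshed-out form of the paper's proof rather than a different approach.
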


\begin{proof}
For any finite space of orderings $X_i$ there exists a  Pythagorean field $F_i$ such that $X_i$ is equivalent to $X_{F_i}$, the space of orderings of the field $F_i$.\\ 
So $X_F=X_{F_1}\oplus X_{F_2} \oplus\dots\oplus X_{F_k}$. 
By the last lemma, $G_{F}=G_{F_1}\ast\dots\ast G_{F_k}$. 
~On the other hand, $\mathcal{G}_F = G_F/G_F^{(3)}$ where $G_F^{(3)}$ is the third term of 2-descending central sequence  of $G_F$ in Lemma \ref{G_F}. 
By observing that free products in the category of pro-2-groups are mapped to free products in the category $\mathcal{C}$, we obtain our claim (for any field $F$, $G_F/G_F^{(3)}$ is an object of the category $\mathcal{C}$).
\end{proof} 
\subsection{Examples}
 For any  formally real field $F$ the space of orderings  $X_F$ is connected if it has just one connected component. 
By Definition \ref{connectedorderings} the minimum number of orderings in the connected space of orderings is four. 
Let $X_F=\{\sigma_1,\sigma_2,\sigma_3,\sigma_4\}$ be a connected space, where without loss of generality we assume $\sigma_1\sigma_2\sigma_3=\sigma_4$. 
Therefore  $X_F$ has three independent orderings, $|\dot{F}/\dot{F}^2|=2^3$ and $|X_F|=4$.

\begin{example}\label{smallestcnnctedwgroup}
Let $F$ be a Pythagorean field such that $\mathcal{G}_F=\langle\sigma_1,\sigma_2,\sigma_3\rangle$ and $\sigma_{1},\sigma_{1}, \sigma_3$ are independent involutions mod $\Phi_F$. 
Then 
\[
[\sigma_1,\sigma_2][\sigma_2,\sigma_3][\sigma_3,\sigma_1]=1 \Longleftrightarrow \mathcal{G}_F\cong (C_4 \times C_4 )\rtimes C_2.
\]
Suppose $\mathcal{G}_F\cong (C_4 \times C_2 )\rtimes C_2$, $\tau_1$, 
$\tau_2$ are generators of $C_4 \times C_4 $ and $\sigma_1$ the  generator of $C_2$. 
Let $\sigma_{1}\sigma_{2}= \tau_{1}$ and $ \sigma_{1}\sigma_{3}=\tau_2$, so the generator $ \sigma_1$  acts on $\tau_i$ by: 
$\sigma^{-1}_1\tau_i\sigma_1=\tau^{-1}_i$ for $i\in \{1,2\}$ 
which gives  $\tau_i\sigma_1=\sigma_1\tau^3_i$. 
But 
\begin{align*}
[\sigma_1,\sigma_2]&=[\sigma_1,\sigma_1\tau_1] =\sigma^{-1}_1(\sigma_1\tau_1)^{-1}\sigma_1\sigma_1\tau_1
=\tau_1^2\qquad
[\sigma_3,\sigma_1]=[\sigma_1\tau_2,\sigma_2]=\tau^2_2\\ 
[\sigma_2,\sigma_3]&=[\tau_1\sigma_1,\tau_2\sigma_1]=\sigma_1\tau_1^3\sigma_1\tau_2^3\tau_1\sigma_1\tau_2\sigma_1=\tau_1\sigma_1\sigma_1\tau_2^3\tau_1\sigma_1\tau_2\sigma_1=\tau_1\tau_2^3\tau_1\tau_2^3
\end{align*}
Therefore $\big([\sigma_1,\sigma_2][\sigma_2,\sigma_3][\sigma_3,\sigma_1]\big)=\tau^2_1\tau_1\tau_2^3\tau_1\tau_2^3\tau^2_2=1$. 
Conversely, let $F$ be a Pythagorean formally real field generated by independent involutions  $\sigma_1,\sigma_2,\sigma_3$ which satisfy the relation $[\sigma_1,\sigma_2][\sigma_2,\sigma_3][\sigma_3,\sigma_1]=1$. 
Suppose $\tau_1=\sigma_1\sigma_2$ and $\tau_2=\sigma_1\sigma_3$. 
We will show  $\tau_1$ and $ \tau_2$ have order four. 
As $\Phi_F=[\mathcal{G}_F,\mathcal {G}_F]$ and any element in 
$\Phi_F$ has order two, we have 
\[\tau_1^4\sigma_1\sigma_2\sigma_1\sigma_2\sigma_1\sigma_2\sigma_1\sigma_2=[\sigma_1,\sigma_2][\sigma_1,\sigma_2]=1.
\] 
It is easy to check that $\tau_1^2=(\sigma_1\sigma_2)^2\neq1$, so the  order of $\tau_1$ is four. 
In the same way, $\tau_2$ has order four. 
Let $H=\langle\tau_1\rangle\times\langle\tau_2\rangle$ and $ K={}\langle\sigma_1\rangle$, since $\sigma_1^{-1}\tau_1\sigma_1=\tau_1^3$,$\sigma_1^{-1}\tau_2\sigma_1=\tau_2^3$ the group $K$ acts on $H$ by  
$\sigma_1^{-1}\tau_i\sigma_1=\tau_i^{-1}=\tau_i^3$. 
So the  set 
\[
\{(h,k)\,|\, h\in (\langle\tau_1\rangle\times\langle\tau_2\rangle),\,  k\in \langle\sigma_1\rangle\}
\]
has the  group structure $(C_4 \times C_4 )\rtimes C_2$. 
On the other hand $\big(\langle\tau_1\rangle\times\langle\tau_2\rangle\big)\cap  \langle\sigma_1\rangle={1}$ and $\big(\langle\tau_1\rangle\times\langle\tau_2\rangle\big)$ is a normal subgroup, so $\mathcal{G}_F{\cong}(C_4 \times C_4)\rtimes C_2.$
\end{example}

\begin{example}\label{example1}
Let $F$ be a field  such  that; 
\[
X_F=\{\sigma_1,\dots,\sigma_6\,|\,(\sigma_1\sigma_2\sigma_3)^2=1,\,(\sigma_4\sigma_5\sigma_6)^2=1\}.
\] 
We can determine the structure of  W-group $\mathcal{G}_F$  and  the  Frattini subgroup $\Phi_F$.  
The space $X_F$  has two connected components;
\[ 
 X_1=\{\sigma_1,\sigma_2,\sigma_3,\sigma_1\sigma_2\sigma_3\} \qquad X_2=\{\sigma_4,\sigma_5,\sigma_6,\sigma_4\sigma_5\sigma_6\}.
 \]
Let $F_1$ and $F_2$ be the two  Pythagorean formally real fields corresponding to $X_1$ and $X_2$ respectively. 
Their corresponding W-groups are  $ \mathcal{G}_{F_1}$ and $\mathcal{G}_{F_2}$. 
So
\[ 
 \mathcal{G}_{F_1}=\langle\sigma_1,\sigma_2,\sigma_3\,|\,(\sigma_1\sigma_2\sigma_3)^2=1\rangle \qquad \mathcal{G}_{F_2}=\langle\sigma_4,\sigma_5,\sigma_6\,|\,(\sigma_4\sigma_5\sigma_6)^2=1\rangle.
 \]
Apply Theorem \ref{disconnectedcase}  to $F$ and the definition of 
free product of two pro-2-groups, to see that.
\[  
\mathcal{G}_F\cong \mathcal{G}_{F_1}\ast \mathcal{G}_{F_2}\cong\big(\mathcal{G}_{F_1}\times\langle\mathcal{G}_{F_1},\mathcal{G}_{F_2}\rangle\big)\rtimes \mathcal{G}_{F_2}.
\]
As we mentioned in Remark \ref{generatorofwgroup}, commutators are in the center so;
\begin{align*}
 (\sigma_1\sigma_2\sigma_3)^2&= \sigma_1\sigma_2\sigma_3 \sigma_1\sigma_2\sigma_3=\sigma_1\sigma_2\sigma_1\sigma_1 \sigma_3\sigma_1\sigma_2\sigma_3=\sigma_1\sigma_2\sigma_1\sigma_2\sigma_2\sigma_1 \sigma_3\sigma_1\sigma_2\sigma_3\\
 &=[\sigma_1, \sigma_2]\sigma_2\sigma_1\sigma_3\sigma_1\sigma_2\sigma_3=[\sigma_1, \sigma_2]\sigma_2\sigma_1\sigma_3\sigma_1 \sigma_3\sigma_3\sigma_2\sigma_3\\
 &=[\sigma_1, \sigma_2]\sigma_2[\sigma_1,\sigma_3]\sigma_3\sigma_2\sigma_3
 =[\sigma_1, \sigma_2][\sigma_1,\sigma_3][\sigma_2,\sigma_3].
\end{align*}
By the last example, $\mathcal{G}_{F_1}$, $\mathcal{G}_{F_2}$ are 
isomorphic to the group $\big(\prod_2 C_4\big)\rtimes C_2$. 
Therefore,
 \[ 
  \mathcal{G}_F\cong\Big(\big(\prod_2 C_4\rtimes C_2\big) \times\langle\mathcal{G}_{F_1},\mathcal{G}_{F_2}\rangle\Big)\rtimes \Big(\prod _2 C_4\rtimes C_2\Big).
  \]  
On the other hand, $\langle\mathcal{G}_{F_1},\mathcal{G}_{F_2}\rangle=\langle [a_i,a_j] \,|\, a_i \in \mathcal{G}_{F_1},\, a_j \in \mathcal{G}_{F_2}\rangle,$ but
 \[
\mathcal{G}_{F_1}=\langle\sigma_1,\sigma_2,\sigma_3\rangle\qquad \mathcal{G}_{F_2}=\langle\sigma_4,\sigma_5,\sigma_6\rangle.
\] 
So $\langle\mathcal{G}_{F_1},\mathcal{G}_{F_2}\rangle=\langle[\sigma_i,\sigma_j]  \,| \, i=1,2,3,\; j=4,5,6\rangle$.  
There are nine such commutators.
Since  any of $[\sigma_i,\sigma_j]$ has order two, we have: 
\[ 
\langle\mathcal{G}_{F_1},\mathcal{G}_{F_2}\rangle\cong \prod_{9}C_2, \quad
  \mathcal{G}_F\cong \mathcal{G}_{F_1}\ast \mathcal{G}_{F_2}\cong\Big(\big(\prod_2 C_4\rtimes C_2\big) \times \prod_{9}C_2\Big)\rtimes\Big(\prod _2 C_4\rtimes C_2\Big). 
\]  
Therefore $\mathcal{G}_F$  is a group of order $2^{19}$. 
~Since $\Phi_F=[\mathcal{G}_F,\mathcal{G}_F]$ there are two cases for elements of $\Phi_F$. 
~First, if $g_1,g_2$ are in different components,  there are nine such commutators. 
Second, if $g_1,g_2$ are in the same component, there are four such commutators: $[\sigma_1,\sigma_2],[\sigma_1,\sigma_3] $ of $X_1$ and  
$[\sigma_4,\sigma_5], [\sigma_4,\sigma_6]$ of $X_2$. 
~But $(\sigma_1\sigma_2\sigma_3)^2=1$ and 
$(\sigma_4\sigma_5\sigma_6)^2=1$, so $[\sigma_2,\sigma_3]$ and $[\sigma_5,\sigma_6]$ will be generated by these four commutators. 
Therefore altogether we  have 13 commutators and  any of them has order two, so $\Phi_F\cong\prod_{13}C_2$.
\end{example}

\end{document}